\newtheorem*{thm*}{Theorem}
\newtheorem*{lem*}{Lemma}
\newtheorem*{pro*}{Proposition}
\newtheorem{thm}{Theorem}
\newtheorem{lem}[thm]{Lemma}
\newtheorem{pro}[thm]{Proposition}
\newtheorem{cor}[thm]{Corollary}
\newtheorem{conj}[thm]{Conjecture}
\newcommand{\N}{\mathbb{N}}
\newcommand{\Z}{\mathbb{Z}}
\newcommand{\col}{\mathrm{col}}
\begin{document}

\title{Criticality, The List Color Function, and List Coloring the Cartesian Product of Graphs}

\author{Hemanshu Kaul\footnote{Department of Applied Mathematics, Illinois Institute of Technology, Chicago, IL 60616. E-mail: {\tt kaul@iit.edu}} \;and Jeffrey A. Mudrock \footnote{Department of Applied Mathematics, Illinois Institute of Technology, Chicago, IL 60616. E-mail: {\tt jmudrock@hawk.iit.edu}} }

\date{}

\maketitle

\begin{abstract}
We introduce a notion of color-criticality in the context of chromatic-choosability, $\chi_{\ell}(G)=\chi(G)$. We define a graph $G$ to be strong $k$-chromatic-choosable if $\chi(G) = k$ and every $(k-1)$-assignment for which $G$ is not list-colorable has the property that the lists are the same for all vertices. That is the usual coloring is, in some sense, the obstacle to list-coloring. We prove basic properties of strongly chromatic-choosable graphs such as chromatic-choosability and vertex-criticality, and we construct infinite families of strongly chromatic-choosable graphs. We derive a sufficient condition for the existence of at least two list colorings of strongly chromatic-choosable graphs and use it to show that: if $M$ is a strong $k$-chromatic-choosable graph with $|E(M)| \leq |V(M)|(k-2)$ and $H$ is a graph that contains a Hamilton path, $w_1, w_2, \ldots, w_m$, such that $w_i$ has at most $\rho \geq 1$ neighbors among $w_1, \ldots, w_{i-1}$, then $\chi_{\ell}(M \square H) \le k+ \rho - 1$. We show that this bound is sharp for all $\rho \ge 1$ by generalizing the theorem to apply to $H$ that are $(M,\rho)$-Cartesian accommodating which is a notion we define with the help of the list color function, $ P_{\ell}(G,k)$, the list analogue of the chromatic polynomial.

We use the list color function to determine the list chromatic number of certain star-like graphs: $\chi_{\ell}(M \square K_{1,s}) =$ $k \; \text{if } s < P_{\ell}(M,k)$, or $k+1 \; \text{if } s \geq P_{\ell}(M,k)$, where $M$ is a strong $k$-chromatic-choosable graph. We use the fact that $ P_{\ell}(M,k)$ equals $P(M,k)$, the chromatic polynomial, when $M$ is an odd cycle, complete graph, or the join of an odd cycle with a complete graph to prove $\chi_{\ell}(C_{2l+1} \square K_{1,s})$ transitions from 3 to 4 at $s= 2^{2l+1}-2$, $\chi_{\ell}(K_n \square K_{1,s})$ transitions from $n$ to $n+1$ at $s= n!$, and $\chi_{\ell}(K_n \vee C_{2l+1} \square K_{1,s})$ transitions from $n+3$ to $n+4$ at $s=\frac{1}{3} (n+3)! (4^l-1)$.

\noindent {\bf Keywords.} Cartesian product of graphs, graph coloring, list coloring, criticality, list color function.

\noindent \textbf{Mathematics Subject Classification.} 05C15.

\end{abstract}

\section{Introduction}\label{intro}

In this paper all graphs are finite simple graphs.  Generally speaking we follow West~\cite{W01} for terminology and notation. List coloring is a well known variation on the classic vertex coloring problem, and it was introduced independently by Vizing~\cite{V76} and Erd\H{o}s, Rubin, and Taylor~\cite{ET79} in the 1970's.  In the classic vertex coloring problem we wish to color the vertices of a graph $G$ with as few colors as possible so that adjacent vertices receive different colors, a so-called \emph{proper coloring}. The chromatic number of a graph, denoted $\chi(G)$, is the smallest $k$ such that $G$ has a proper coloring that uses $k$ colors.  For list coloring, we associate a \emph{list assignment}, $L$, with a graph $G$ such that each vertex $v \in V(G)$ is assigned a list of colors $L(v)$.  A list assignment $L$ is called a \emph{$k$-assignment} if all the lists associated with $L$ have size $k$.  The graph $G$ is \emph{$L$-colorable} if there exists a proper coloring $f$ of $G$ such that $f(v) \in L(v)$ for each $v \in V(G)$ (we refer to $f$ as a \emph{proper $L$-coloring} for $G$). The \emph{list chromatic number} of a graph $G$, denoted $\chi_\ell(G)$, is the smallest $k$ such that $G$ is $L$-colorable whenever the list assignment $L$ satisfies $|L(v)| \geq k$ for each $v \in V(G)$. It is immediately obvious that for any graph $G$, $\chi(G) \leq \chi_\ell(G)$.  Erd\H{o}s, Taylor, and Rubin observed in~\cite{ET79} that bipartite graphs can have arbitrarily large list chromatic number. This means that the gap between $\chi(G)$ and $\chi_\ell(G)$ can be arbitrarily large.

\subsection{Chromatic-Choosability}\label{cc}

Graphs in which $\chi(G) = \chi_\ell(G)$ are known as \emph{chromatic-choosable} graphs~\cite{O02}. Many classes of graphs have been conjectured to be chromatic-choosable. The most well known conjecture along these lines is the List Coloring Conjecture which states that every line graph of a loopless multigraph is chromatic-choosable. The list coloring conjecture was formulated independently by many different researchers (see~\cite{HC92}). In addition, total graphs~\cite{BKW97} and claw free graphs~\cite{GM97} are conjectured to be chromatic-choosable. In 2001 Kostochka and Woodall~\cite{KW01} conjectured that the square of any graph is chromatic-choosable. However, Kim and Park proved this conjecture to be false~\cite{KP15}. On the other hand, there are classes of graphs that are known to be chromatic-choosable. In 1995, Galvin~\cite{G95} showed that the List Coloring Conjecture holds for line graphs of bipartite multigraphs, and in 1996, Kahn~\cite{K96} proved an asymptotic version of the conjecture. Tuza and Voigt~\cite{TV96} showed that chordal graphs are chromatic-choosable, and Prowse and Woodall~\cite{PW03} showed that powers of cycles are chromatic-choosable.  Recently, Noel, Reed, and Wu~\cite{NR15} proved Ohba's conjecture which states that every graph, $G$, on at most $2 \chi(G) + 1$ vertices is chromatic-choosable.

In this paper, we continue this investigation of chromatic-choosability in the realm of Cartesian products of graphs. To aid our investigations we use two major tools - criticality and the list color function.

\subsection{Critical Graphs}\label{cg}

Criticality is a widely used notion in the study of most graph properties. It is used to define the graphs where the property is lost by removal of an edge or vertex. Most commonly used in coloring problems, a \emph{$k$-critical} graph is a graph whose chromatic number is $k$ but whose proper subgraphs have chromatic number strictly less than $k$. If the proper subgraphs with this property are restricted to be just the induced subgraphs then such a critical graph is called \emph{$k$-vertex-critical}. In 1951 Dirac~\cite{D51} initiated the study of critical graphs and since then a large body of literature has developed around this notion since they characterize the chromatic number as: $\chi(G) \ge k$ if and only if $G$ contains a $k$-critical graph.

A similar notion for list coloring has been harder to study since list coloring is highly dependent on the particular list assignment. In 2009, Stiebitz, Tuza and Voigt~\cite{ST09} introduced and studied the notion of $k$-list critical. A graph $G$ is $L$-critical, for a list assignment $L$, if every proper subgraph of $G$ is $L$-colorable, but $G$ is not $L$-colorable. Then, $G$ is called $k$-list critical if there is a $(k-1)$-assignment $L$ for $G$ such that $G$ is $L$-critical. They studied the structure of such graphs, in particular list critical complete graphs. They introduced and studied an important subclass of list-critical graphs called \emph{strong k-critical} in which $G$ is $k$-critical and every $(k-1)$-assignment, $L$, for which $G$ is not $L$-colorable has the property that the lists are the same on all vertices. That is the usual coloring is, in some sense, the obstacle to list-coloring.

We extend this notion to define criticality in the context of chromatic-choosability. We define a graph $G$ to be \emph{strong k-chromatic-choosable} if $\chi(G) = k$ and every $(k-1)$-assignment, $L$, for which $G$ is not $L$-colorable has the property that the lists are the same on all vertices. These graphs include the strong $k$-critical graphs.  Furthermore, these graphs are chromatic-choosable and $k$-vertex-critical. In this paper we show how this notion can be used to study list coloring of Cartesian products of graphs.

\subsection{List Color Function}\label{lcf}

Counting the number of colorings of a graph is an important question that is studied systematically using the classical notion of the chromatic polynomial, originally introduced by Birkhoff in 1912~\cite{B12}. The \emph{chromatic polynomial} of a graph $G$ is the function $P(G,k)$ that is equal to the number of ordinary $k$-colorings of $G$.  It can be shown that $P(G,k)$ is a polynomial in $k$.  This notion was extended to list coloring as follows. If $L$ is a list assignment for $G$, we use $P(G,L)$ to denote the number of proper $L$-colorings of $G$. The \emph{list color function} $P_\ell(G,k)$ is the minimum value of $P(G,L)$ where the minimum is taken over all possible $k$-assignments $L$ for $G$. Generally, $P(G,k)$ and $P_{\ell}(G,k)$ can be quite different, especially for small values of $k$. However, as was reported in~\cite{AS90}, if $G$ is a chordal graph (i.e. a graph in which cycles of length at least four contain a chord), then $P_{\ell}(G,k)=P(G,k)$ for all $k$.  Also, Thomassen~\cite{T09} showed that if $G$ is a graph with $n$ vertices, then $P_{\ell}(G,k)=P(G,k)$ whenever $k \geq n^{10}$. Recently Wang, Qian, and Yan~\cite{WQ17} gave a major improvement: if $G$ is a connected graph with $m$ edges, then $P_{\ell}(G,k)=P(G,k)$ whenever $k > \frac{m-1}{\ln(1+ \sqrt{2})}$.

In this paper, we show how knowledge of the list color function can be exploited to bound the the list chromatic number of certain Cartesian products or even find it exactly in some special cases.

\subsection{Cartesian Product of Graphs}\label{prelim}

The \emph{Cartesian product} of graphs $G$ and $H$, denoted $G \square H$, is the graph with vertex set $V(G) \times V(H)$ and edges created so that $(u,v)$ is adjacent to $(u',v')$ if and only if either $u=u'$ and $vv' \in E(H)$ or $v=v'$ and $uu' \in E(G)$. Note that $G \square H$ contains $|V(G)|$ copies of $H$ and $|V(H)|$ copies of $G$.  It is also easy to show that $\chi(G \square H) = \max \{\chi(G), \chi(H) \}$.  So, we have that $\max \{\chi(G), \chi(H) \} \leq \chi_\ell(G \square H)$.

There are few results in the literature regarding the list chromatic number of the Cartesian product of graphs. In 2006, Borowiecki, Jendrol, Kr{\'a}l, and Mi{\v s}kuf~\cite{BJ06} showed the following.
\begin{thm}[\cite{BJ06}] \label{thm: Borow}
$\chi_\ell(G \square H) \leq \min \{\chi_\ell(G) + \col(H), \col(G) + \chi_\ell(H) \} - 1.$
\end{thm}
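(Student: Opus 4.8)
The plan is to prove the two bounds $\chi_\ell(G \square H) \le \chi_\ell(G) + \col(H) - 1$ and $\chi_\ell(G \square H) \le \col(G) + \chi_\ell(H) - 1$ separately; the statement then follows by taking the minimum, and by the symmetry of the Cartesian product it suffices to establish the first inequality.

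Write $d = \col(H) - 1$, so that $H$ admits a vertex ordering $v_1, v_2, \ldots, v_n$ in which every $v_i$ has at most $d$ neighbors among $v_1, \ldots, v_{i-1}$; this is exactly what the coloring number guarantees. For $i \in \{1, \ldots, n\}$ let $G_i$ denote the copy of $G$ in $G \square H$ sitting over $v_i$, i.e., the subgraph induced by $\{(u, v_i) : u \in V(G)\}$. Then $G_i \cong G$, and the only edges of $G \square H$ joining $G_i$ to $G_j$ for $i \ne j$ occur when $v_i v_j \in E(H)$, in which case they form the perfect matching $\{(u, v_i)(u, v_j) : u \in V(G)\}$.

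Now let $L$ be any list assignment for $G \square H$ with $|L(u, v)| \ge \chi_\ell(G) + d$ for every vertex. I would construct a proper $L$-coloring $f$ by processing the copies $G_1, G_2, \ldots, G_n$ in order. Suppose $f$ has already been defined on $G_1, \ldots, G_{i-1}$; for each $u \in V(G)$, the vertex $(u, v_i)$ is adjacent across copies only to the vertices $(u, v_j)$ with $j < i$ and $v_j v_i \in E(H)$, of which there are at most $d$. Hence the reduced list $L'(u, v_i) = L(u, v_i) \setminus \{f(u, v_j) : j < i,\ v_j v_i \in E(H)\}$ has size at least $(\chi_\ell(G) + d) - d = \chi_\ell(G)$. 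Since $G_i \cong G$ and $L'$ assigns to the vertices of $G_i$ lists of size at least $\chi_\ell(G)$, there is a proper $L'$-coloring of $G_i$; extend $f$ by it. Any monochromatic edge of $f$ in $G \square H$ would lie either inside some $G_i$, which is impossible since $f$ restricted to $G_i$ is proper, or across copies $G_i, G_j$ with $j < i$, which is impossible since $f(u, v_i) \in L'(u, v_i)$ avoids $f(u, v_j)$. Thus $f$ is a proper $L$-coloring, and $\chi_\ell(G \square H) \le \chi_\ell(G) + d = \chi_\ell(G) + \col(H) - 1$.

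As for difficulty, there is essentially no obstacle: the argument is a routine "greedy by layers" degeneracy argument, and the only points that require care are the bookkeeping of back-neighbors in the chosen ordering of $V(H)$ and the structural facts about which edges of $G \square H$ run between two copies of $G$. The only genuine external input is the defining property of $\chi_\ell(G)$ — that lists of size at least $\chi_\ell(G)$ always admit a proper coloring — which is invoked once for each copy $G_i$.
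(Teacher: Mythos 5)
Your argument is correct and is essentially the proof given in the cited source \cite{BJ06}; the paper itself states this theorem as a quoted result without reproving it. The greedy layer-by-layer argument, peeling off at most $\col(H)-1$ colors from each list before list-coloring each copy of $G$, together with the symmetry $G \square H \cong H \square G$, is exactly the intended route, and your bookkeeping of the cross-copy matchings is accurate.
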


Here $\col(G)$, the \emph{coloring number} of a graph $G$, is the smallest integer $d$ for which there exists an ordering, $v_1, v_2, \ldots, v_n$, of the elements in $V(G)$ such that each vertex $v_i$ has at most $d-1$ neighbors among $v_1, v_2, \ldots, v_{i-1}$. The coloring number is a classic greedy upper bound on the list chromatic number, and it immediately implies that $\Delta(G)+1$ is an upper bound on the list chromatic number where $\Delta(G)$ is the maximum degree of $G$. Vizing~\cite{V76} extended this by proving the list coloring version of Brooks' Theorem characterizing complete graphs and odd cycles as the only connected graphs with $\chi_\ell(G)=\Delta(G)+1$.


Borowiecki et al.~\cite{BJ06} construct examples where the upper bound in their theorem is tight.  Specifically, they show that if $k \in \N$ and $G$ is a copy of the complete bipartite graph $K_{k, (2k)^{k(k+k^k)}}$, then $\chi_\ell(G \square G)=\chi_\ell(G)+\col(G)-1$.  On the other hand, there are examples where the upper bounds from Theorem~\ref{thm: Borow} and Vizing's extension of Brooks' theorem are not tight. For example, suppose that $G$ is a copy of $C_{2k+1} \square P_m$ where $m \geq 3$.  Since $\chi_\ell(C_{2k+1})=\col(C_{2k+1})=3$ and $\chi_\ell(P_m)=\col(P_m)=2$, both the bounds give us $\chi_\ell(G) \leq 4$, yet we will show below that $\chi_\ell(G)=3$.  Another, more dramatic, example where these theorems do not produce tight bounds is when we are working with the Cartesian product of two complete graphs.  Suppose $m \geq n \geq 2$, and note that $K_m \square K_n$ is the line graph for the complete bipartite graph $K_{m,n}$. So, by Galvin's celebrated result~\cite{G95}: $m=\chi(K_m \square K_n) = \chi_\ell(K_m \square K_n)$. However, Theorem~\ref{thm: Borow} only yields an upper bound of $m+n-1$ since $\chi_\ell(K_m)=\col(K_m)=m$, and Brooks' theorem only tells us $\chi_\ell(K_m \square K_n) \leq m+n-2$.

Alon~\cite{A00} showed that for any graph $G$, $\col(G) \leq 2^{O(\chi_\ell(G))}$.  Combining this result with Theorem~\ref{thm: Borow} implies that we have an upper bound on $\chi_\ell(G \square H)$ in terms of only the list chromatic numbers of the factors.  Borowiecki et al.~\cite{BJ06} conjecture that a much stronger bound holds: there is a constant $A$ such that $\chi_\ell(G \square H) \leq A( \chi_\ell(G) + \chi_\ell(H))$.  While we will not address this conjecture in this paper, we will present results that are improvements on Theorem~\ref{thm: Borow} when the factors in the Cartesian product belong to certain large classes of graphs.

\subsection{Outline of the Paper and Open Questions}

The following open questions motivated much of the research presented in this paper.

\vspace{5mm}
\noindent {\bf Question:} For what factors, $G$ and $H$, is $G \square H$ chromatic-choosable?

Or, even more simply,

\vspace{5mm}
\noindent {\bf Question:} For which graphs, $G$, is $G \square P_n$ chromatic-choosable for each $n \in \N$?
\vspace{5mm}

In Section 2, we introduce the notion of \emph{strongly chromatic-choosable} graphs, and we prove some basic facts about these graphs including that they are chromatic-choosable, and $k$-vertex-critical in a strong sense: $\chi(G-\{v\}) \leq \chi_{\ell}(G - \{v\})<k$, for any $v \in V(G)$. This extends the notion of \emph{strongly critical} graphs. In fact, a graph is strong $k$-critical if and only if it is $k$-critical and strong $k$-chromatic-choosable. In addition to strongly critical graphs like complete graphs, odd cycles, Dirac graphs. etc., we also construct nontrivial examples of strong $k$-chromatic choosable graphs that are not strong k-critical, for all $k \ge 4$. We also discuss how strong chromatic-choosability is related to an older notion of amenable colorings and to a conjecture of Mohar from 2001 regarding $\Delta(G)+1$-edge critical graphs.

In Section 3, we derive a sufficient condition for existence of at least two colorings of strongly chromatic-choosable graphs using a result of Akbari, Mirrokni, and Sadjad~\cite{AM06} that relates $f$-choosability and unique list colorability.

\begin{lem*}
Let $G$ be a strong $k$-chromatic-choosable graph with $n$ vertices and $m$ edges. Suppose that $L$ is a list assignment for $G$ such that $|L(v)| \geq k-1$ for each $v \in V(G)$ and $L$ is not a constant $(k-1)$-assignment for $G$. If $m \leq n(k-2)$, then there are at least two proper $L$-colorings for $G$.
\end{lem*}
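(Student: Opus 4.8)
The plan is to show first that $G$ has at least one proper $L$-coloring, and then that it cannot have exactly one. We may assume $k\ge 3$, since for $k\le 2$ the hypothesis $m\le n(k-2)$ together with $\chi(G)=k$ (which forces $m\ge 1$ when $k=2$) is never satisfiable; in particular $n\ge k\ge 3$.

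For the existence of a coloring I would use that $|L(v)|\ge k-1$ for every $v$ while $L$ is not a constant $(k-1)$-assignment, to pass to a non-constant $(k-1)$-assignment $L'$ with $L'(v)\subseteq L(v)$ for all $v$. If every $|L(v)|=k-1$, take $L'=L$. If some $|L(u)|\ge k$, fix an arbitrary $(k-1)$-subset of $L(v)$ for each $v\neq u$; since $L(u)$ then has at least two distinct $(k-1)$-subsets and $n\ge 2$, one can choose $L'(u)$ so that the lists of $L'$ are not all equal. As $G$ is strong $k$-chromatic-choosable, every non-constant $(k-1)$-assignment is list-colorable, so $G$ is $L'$-colorable and hence $L$-colorable.

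Now suppose, for contradiction, that $c$ is the unique proper $L$-coloring of $G$. The starting point is the usual recoloring observation: for every vertex $v$ and every color $a\in L(v)\setminus\{c(v)\}$, some neighbor of $v$ is colored $a$ by $c$, since otherwise recoloring $v$ to $a$ produces a second $L$-coloring. Hence $\deg_G(v)\ge |L(v)|-1\ge k-2$ for every $v$, and summing over the vertices gives only $2m=\sum_v\deg_G(v)\ge\sum_v(|L(v)|-1)\ge n(k-2)$, which is consistent with $m\le n(k-2)$ and hence not yet a contradiction — the crude count loses a factor of two because a single edge can witness a forbidden color at each of its endpoints. To recover that factor I would invoke the theorem of Akbari, Mirrokni, and Sadjad~\cite{AM06}: unique $L$-colorability of $G$ is exactly the statement that $G$ fails to be $f$-choosable for $f(v)=|L(v)|-1$, witnessed by the assignment $v\mapsto L(v)\setminus\{c(v)\}$, and their result converts this failure into a sharper lower bound on $|E(G)|$ relative to $\sum_v(|L(v)|-1)$, together with structural information about a subgraph of $G$ in which every vertex $v$ has degree at least $|L(v)|-1$. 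Combined with $|L(v)|\ge k-1$, this forces $m\ge n(k-2)$, hence equality throughout: $m=n(k-2)$, every $|L(v)|=k-1$, and $L$ is a non-constant $(k-1)$-assignment. To finish I would exploit this equality case together with $\chi(G)=k$ and the $k$-vertex-criticality of strongly chromatic-choosable graphs: the rigid (odd-cycle/clique block) shape of the extremal subgraph produced by the Akbari--Mirrokni--Sadjad result is incompatible with carrying a graph of chromatic number $k$ at this edge density, or else one perturbs $L$ at a single vertex to another non-constant $(k-1)$-assignment that, by strong chromatic-choosability, is colorable and yields a proper $L$-coloring different from $c$.

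The main obstacle is closing the factor-of-two gap quantitatively: I would need to extract from the Akbari--Mirrokni--Sadjad result the precise inequality that, once combined with the non-constancy of $L$, contradicts $m\le n(k-2)$, and to verify that their statement applies to list assignments with non-uniform list sizes (all at least $k-1$) rather than only to uniform $k$-assignments. The equality analysis — turning $m=n(k-2)$, $|L(v)|=k-1$, and unique $L$-colorability into a genuine contradiction with strong chromatic-choosability — is the delicate part, and it is where I expect the interplay between vertex-criticality and the counting bound to be essential.
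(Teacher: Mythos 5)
You have found the right external ingredient (the Akbari--Mirrokni--Sadjad results, which is exactly what the paper uses), but you have misremembered what those results say, and that misstatement is precisely where your argument fails to close. Their contribution is not ``unique $L$-colorability is equivalent to failure of $f$-choosability for $f(v)=|L(v)|-1$, plus structural information about an extremal subgraph''; it is the pair of statements recorded as Theorem~\ref{thm: akbari} and Corollary~\ref{cor: akbari}: if $\sum_v f(v) = m+n$ and some $L$ with $|L(v)|=f(v)$ admits a \emph{unique} proper $L$-coloring, then $G$ is $f$-choosable; and if $\sum_v f(v) > m+n$, then no such uniquely colorable assignment exists at all. The ``odd-cycle/clique block shape of the extremal subgraph'' you invoke belongs to a different theorem (the Erd\H{o}s--Rubin--Taylor/Borodin characterization of degree-choosable graphs via Gallai trees), and nothing of that sort is available or needed here.

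With the corollary stated correctly, your counting step does go through: taking $f(v)=|L(v)|\ge k-1$, a unique proper $L$-coloring forces $\sum_v f(v)\le m+n$, hence $n(k-1)\le m+n$, i.e.\ $m\ge n(k-2)$, so equality holds and every list has size exactly $k-1$. But neither of your proposed ways of finishing the equality case works: perturbing $L$ at a single vertex produces a proper coloring of a \emph{different} list assignment, which need not be a proper $L$-coloring distinct from $c$; and there is no rigid extremal subgraph to analyze. The paper's one-line finish is exactly the piece you are missing: at the threshold $\sum_v f(v)=m+n$, the \emph{theorem} (not just the corollary) applies and says that unique $L$-colorability makes $G$ $f$-choosable with $f\equiv k-1$, whence $\chi(G)\le\chi_\ell(G)\le k-1$, contradicting $\chi(G)=k$. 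So your route is the paper's route in outline, but the decisive step --- converting uniqueness at the threshold into $(k-1)$-choosability --- is absent, and you explicitly flag it as an obstacle rather than resolving it; as written the proposal has a genuine gap.
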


All strong $3$-chromatic-choosable graphs and all the examples of strong $k$-chromatic-choosable graphs with $k \geq 4$ from Section 2 satisfy this \emph{edge condition}: $m \leq n(k-2)$. We construct strong $k$-chromatic-choosable graphs that do not satisfy this condition for $k=4,5,6,7$. These examples lead us to question whether we really need the edge condition in the above lemma.

\vspace{5mm}
\noindent {\bf Question:} For $k \geq 4$, does there exist a graph $G$ that is strong $k$-chromatic-choosable and violates the edge condition, and a non-constant $(k-1)$-assignment, $L$, for $G$ such that $G$ has a unique proper $L$-coloring?
\vspace{5mm}

In Section 4, we use the existence of these multiple colorings in strongly chromatic-choosable graphs to prove the following.

\begin{thm*} Let $M$ be a strong $k$-chromatic-choosable graph with $n$ vertices and $m$ edges that satisfies $m \leq n(k-2)$, and suppose that graph $H$ contains a Hamilton path, $w_1, w_2, \ldots, w_m$, such that $w_i$ has at most $\rho \ge 1$ neighbors among $w_1, \ldots, w_{i-1}$. Then, $\chi_{\ell}(M \square H) \leq k+ \rho -1$.
\end{thm*}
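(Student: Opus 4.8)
The plan is to fix an arbitrary $(k+\rho-1)$-assignment $L$ for $M \square H$ and color the copies of $M$ one at a time in the order of the Hamilton path. Write $M_1,\ldots,M_t$ for the copies corresponding to $w_1,\ldots,w_t$ (so $t=|V(H)|$), and, given a proper coloring of $M_1\cup\cdots\cup M_{p-1}$, let $L_p$ be the list assignment on $M_p$ obtained from $L$ by deleting from each vertex's list the colors used on its already-colored neighbors. Since $w_p$ has at most $\rho$ neighbors among $w_1,\ldots,w_{p-1}$, every vertex of $M_p$ keeps a list of size at least $(k+\rho-1)-\rho=k-1$. Observe the dichotomy: if $L_p$ is \emph{not} a constant $(k-1)$-assignment for $M$, then its lists have size $\ge k-1$, it is not a constant $(k-1)$-assignment, and $m\le n(k-2)$, so the Lemma of Section~3 gives at least two proper $L_p$-colorings of $M_p$; whereas if $L_p$ \emph{is} a constant $(k-1)$-assignment then $M_p$ has no proper $L_p$-coloring, as $\chi(M)=k>k-1$. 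Hence it suffices to color the copies so that $L_p$ is never a constant $(k-1)$-assignment.

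I would do this by induction on $p$, maintaining the invariant that after $M_1,\ldots,M_{p-1}$ are colored, $L_p$ is not a constant $(k-1)$-assignment. For $p=1$ this is immediate, since $L_1=L$ has all lists of size $k+\rho-1>k-1$. Assume it holds for some $p<t$. By the Lemma pick two distinct proper $L_p$-colorings $g,g'$ of $M_p$; I claim that using at least one of them to color $M_p$ leaves $L_{p+1}$ not a constant $(k-1)$-assignment. Suppose not, so both choices make $L_{p+1}$ constant, with value $C$ for $g$ and $C'$ for $g'$. For a vertex $v$ of $M$ let $R_v$ be the set of colors removed from the list of $(v,w_{p+1})$ because of its already-colored neighbors \emph{other than} $(v,w_p)$; this set does not depend on whether $g$ or $g'$ was used, and $|R_v|\le\rho-1$. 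Since $w_pw_{p+1}\in E(H)$, the list of $(v,w_{p+1})$ also loses $g(v,w_p)$ (resp.\ $g'(v,w_p)$), so $L((v,w_{p+1}))\setminus(R_v\cup\{g(v,w_p)\})=C$ and $L((v,w_{p+1}))\setminus(R_v\cup\{g'(v,w_p)\})=C'$ for all $v$. Comparing cardinalities (note $|L((v,w_{p+1}))|=k+\rho-1$ and $|C|=|C'|=k-1$) forces $|R_v|=\rho-1$ and forces $R_v\sqcup\{g(v,w_p)\}$ and $R_v\sqcup\{g'(v,w_p)\}$ to be subsets of $L((v,w_{p+1}))$; hence $C\sqcup\{g(v,w_p)\}=C'\sqcup\{g'(v,w_p)\}=L((v,w_{p+1}))\setminus R_v$ for every $v$.

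A short set computation now forces a contradiction. Fix $v_0$ with $g(v_0,w_p)\ne g'(v_0,w_p)$ and set $a=g(v_0,w_p)$, $a'=g'(v_0,w_p)$. From $C\sqcup\{a\}=C'\sqcup\{a'\}$ and $a\ne a'$ we get $a\in C'$, $a'\in C$, and $C'=(C\setminus\{a'\})\cup\{a\}$. Now take an arbitrary vertex $v$: the set $C\sqcup\{g(v,w_p)\}$ contains $a'$ (as $a'\in C$), hence so does $C'\sqcup\{g'(v,w_p)\}$, and since $a'\notin C'$ this forces $g'(v,w_p)=a'$; then $C\sqcup\{g(v,w_p)\}=C'\sqcup\{a'\}=C\sqcup\{a\}$, which forces $g(v,w_p)=a$. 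Thus $g(v,w_p)=a$ for \emph{every} vertex $v$ of $M$, i.e.\ $g$ is the constant coloring of $M_p$; but $M$ has an edge, so no proper coloring of $M_p$ is constant, a contradiction. Hence one of $g,g'$ preserves the invariant, completing the induction. Finally, $L_t$ is not a constant $(k-1)$-assignment, so the Lemma gives a proper $L_t$-coloring of $M_t$; coloring it completes a proper $L$-coloring of $M\square H$, proving $\chi_\ell(M\square H)\le k+\rho-1$.

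I expect the crux to be the set-theoretic step of the third paragraph — that two distinct proper colorings of $M_p$ cannot both collapse $L_{p+1}$ to a constant $(k-1)$-assignment — since that is where both hypotheses really bite: $\rho\ge 1$ guarantees $w_pw_{p+1}\in E(H)$, so the coloring of $M_p$ genuinely influences $L_{p+1}$, and the availability of \emph{two} proper colorings of $M_p$ comes from the Section~3 Lemma, which itself needs $m\le n(k-2)$ and strong $k$-chromatic-choosability. The remaining ingredients — greedily coloring copy by copy along the Hamilton path and keeping track of list sizes — are routine.
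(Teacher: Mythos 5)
Your proposal is correct and takes essentially the same approach as the paper: the paper phrases it as an induction on $|V(H)|$ carrying two proper colorings of $M\square(H-w_m)$ that differ only on the last copy, whereas you run a forward pass maintaining the invariant that the next copy's induced list assignment is non-constant, but the substance is identical. In particular, your crux — that two distinct proper colorings of $M_p$ cannot both collapse $L_{p+1}$ to a constant $(k-1)$-assignment, since that would force one of them to be a constant (hence improper) coloring of $M$ — is exactly the paper's key step, which it derives slightly more quickly from the fact that two distinct $(k-1)$-subsets of a $k$-set union to the whole set.
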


This theorem is sharp for $\rho =1$ since it gives $M \square P_n$ is chromatic-choosable. It improves the bound from Theorem~\ref{thm: Borow} for any $H$ satisfying $\col(H)=\chi_{\ell}(H)$ and $\rho= \col(H)-1$ (examples of graphs where both of these conditions are satisfied include paths, cycles, complete graphs, and powers of paths) while $M$ can be any strong $k$-chromatic-choosable graph that satisfies the edge condition. Next we study the case when $\rho >1$.

In Section 5, we study how the list color function can be used to bound list chromatic number of certain Cartesian products.  One of our early results is:
\begin{thm*}
If $M$ is a strong $k$-chromatic-choosable graph, then
\[
\chi_{\ell}(M \square K_{1,s}) =
\begin{cases}
k & \text{if } s < P_{\ell}(M,k)\\
k+1 & \text{if } s \geq P_{\ell}(M,k).
\end{cases}
\]
\end{thm*}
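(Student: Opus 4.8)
The plan is to sandwich $\chi_\ell(M\square K_{1,s})$ between $k$ and $k+1$ for every $s$, and then decide which of the two values occurs according to whether $s<P_\ell(M,k)$ or $s\ge P_\ell(M,k)$. For the sandwich: since $M$ is strong $k$-chromatic-choosable it is chromatic-choosable, so $\chi_\ell(M)=\chi(M)=k$; and since $K_{1,s}$ is a tree, $\col(K_{1,s})=2$ (order the center first, then the leaves). Hence Theorem~\ref{thm: Borow} gives $\chi_\ell(M\square K_{1,s})\le \chi_\ell(M)+\col(K_{1,s})-1=k+1$, while $\chi_\ell(M\square K_{1,s})\ge \chi(M\square K_{1,s})\ge \chi(M)=k$. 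It remains to choose between $k$ and $k+1$. I would first record a preliminary fact that follows from the definition of strong $k$-chromatic-choosability (and which should be among the basic facts of Section~2): if $L'$ is a list assignment for $M$ with $|L'(v)|\ge k-1$ for all $v$ and $L'$ is not a constant $(k-1)$-assignment, then $M$ is $L'$-colorable. (If not, no $(k-1)$-subassignment of $L'$ is colorable, so by strong $k$-chromatic-choosability every $(k-1)$-subassignment is constant; fixing a $(k-1)$-subset of one list forces every list to contain it, and if any list were strictly larger one could swap one color to obtain a non-constant, still non-colorable $(k-1)$-subassignment, a contradiction, so $L'$ itself is constant.)

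Now assume $s<P_\ell(M,k)$ and let $L$ be an arbitrary $k$-assignment of $M\square K_{1,s}$; write $M_c$ for the central copy and $M_1,\dots,M_s$ for the leaf copies, with $L_c,L_1,\dots,L_s$ the induced $k$-assignments. Since $M$ is chromatic-choosable there are at least $P_\ell(M,k)>s$ proper $L_c$-colorings of $M_c$. Call a proper $L_c$-coloring $f$ \emph{bad for leaf $i$} if $M$ has no proper coloring from the reduced lists $L_i(v)\setminus\{f(v)\}$ (equivalently, $M_i$ cannot be completed once $M_c$ is colored by $f$). Each reduced list has size $\ge k-1$, so by the preliminary fact, $f$ is bad for $i$ only if $L_i(v)\setminus\{f(v)\}$ equals a fixed $(k-1)$-set $S_i$ for all $v$ --- which forces $S_i\subseteq\bigcap_v L_i(v)$ and $f(v)$ to be the unique element of $L_i(v)\setminus S_i$ for each $v$. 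If $|\bigcap_v L_i(v)|\le k-1$ this pins down $f$ uniquely; if $|\bigcap_v L_i(v)|=k$ then every such $f$ is constant, hence not a proper coloring of $M$ (which has an edge since $\chi(M)\ge 2$). So \emph{at most one} proper $L_c$-coloring is bad for leaf $i$. Summing over the (at most $s$) leaves, at most $s<P_\ell(M,k)$ of the proper $L_c$-colorings are bad for some leaf, so there is a proper $L_c$-coloring $f^\ast$ bad for no leaf; coloring $M_c$ by $f^\ast$ and each $M_i$ by a proper coloring from $L_i(v)\setminus\{f^\ast(v)\}$ gives a proper $L$-coloring of $M\square K_{1,s}$ (cross edges are respected because each leaf color differs from the corresponding central color). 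Hence $\chi_\ell(M\square K_{1,s})\le k$, so equality holds.

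Finally assume $s\ge P_\ell(M,k)=:p$. Fix a $k$-assignment $L_0$ of $M$ with exactly $p$ proper colorings $f_1,\dots,f_p$, and fix a $(k-1)$-set $S$ of colors disjoint from every $L_0(v)$. Build a $k$-assignment $L$ of $M\square K_{1,s}$ by giving $M_c$ the assignment $L_0$, giving $M_i$ the assignment $v\mapsto S\cup\{f_i(v)\}$ for $1\le i\le p$ (of size $k$, since $f_i(v)\notin S$), and giving any remaining leaf copies the assignment $L_0$. If $\phi$ were a proper $L$-coloring, then $\phi$ restricted to $M_c$ would equal some $f_i$ with $i\le p$, so for every $v$ we would have $\phi(v,\ell_i)\in\big(S\cup\{f_i(v)\}\big)\setminus\{f_i(v)\}=S$, i.e.\ $\phi$ would properly color $M_i\cong M$ using only the $k-1$ colors of $S$, contradicting $\chi(M)=k$. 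Thus $M\square K_{1,s}$ is not $L$-colorable and $\chi_\ell(M\square K_{1,s})\ge k+1$, so equality holds. The main obstacle is the case $s<P_\ell(M,k)$: the crux is showing that each leaf forbids at most one coloring of the central copy, which is exactly where strong chromatic-choosability (via the preliminary fact) and the defining property of $P_\ell$ enter; the case $s\ge P_\ell(M,k)$ is then a direct construction, the only subtlety being to use a block $S$ of fresh colors so that all lists have size exactly $k$.
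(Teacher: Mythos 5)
Your proposal is correct and follows essentially the same route as the paper: Proposition~\ref{pro: obvious}(ii) (your ``preliminary fact''), the count showing each leaf copy rules out at most one of the at least $P_{\ell}(M,k)$ colorings of the central copy (the paper's Lemma~\ref{lem: star}), and for $s \geq P_{\ell}(M,k)$ the identical construction using a minimizing $k$-assignment on the center together with a block of $k-1$ fresh colors on the leaves. The only cosmetic difference is that you prove the ``at most one bad coloring per leaf'' claim directly via the dichotomy on $|\bigcap_v L_i(v)|$, whereas the paper cites the corresponding argument inside the inductive step of Lemma~\ref{lem: main}; the substance is the same.
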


It can be shown $ P_{\ell}(M,k) =  P(M,k)$ when $M$ is an odd cycle, complete graph, or the join of an odd cycle and complete graph.  The following question is open.

\vspace{5mm}
\noindent {\bf Question:} Is $P_\ell(G,k)=P(G,k)$ whenever $G$ is strongly chromatic-choosable?
\vspace{5mm}

When we can compute the exact value of $P_{\ell}(M,k)$ for some strong $k$-chromatic-choosable graph $M$, we can determine when $\chi_\ell(M \square K_{1,s})$ transitions from $k$ to $k+1$.  For example, $\chi_{\ell}(K_n \vee C_{2l+1} \square K_{1,s})$ transitions from $n+3$ to $n+4$ at $s=\frac{1}{3} (n+3)! (4^l-1)$.  Note that this gives results, including chromatic-choosable Cartesian products, where the second factor of the product, a star, is far from having a Hamilton path. We can also extend this theorem to get chromatic-choosability when the second factor is a subdivision of a star and the first factor is a strongly chromatic-choosable graph satisfying the edge condition.

Finally, we define $(M, \rho)$-Cartesian accommodating graphs and prove that the theorem from Section 4 remains true with the second factor replaced by this more general class of graphs. We prove sharpness, for all $\rho$, by constructing a $(M, \rho)$-Cartesian accommodating graph $H$, with $\chi_{\ell}(M \square H) = k+ \rho -1$, by a recursive construct $S_{M,B', \rho}$ that glues together $P_{\ell}(M,k+ \rho -2)$ disjoint copies of $S_{M,B', \rho -1}$ starting with $B'$, a subdivision of star $K_{1, P_{\ell}(M,k)-1}$. This allows us to give recursive constructions of large chromatic-choosable graphs.

\section{Strongly Chromatic-Choosable Graphs}\label{scc}

In this section we introduce the notion of strongly chromatic-choosable graphs. We refer to a list assignment, $L$, for graph $G$ as a \emph{bad k-assignment} for $G$ if $L$ is a $k$-assignment and $G$ is not $L$-colorable.  A list assignment, $L$, for a graph $G$ is called \emph{constant} if $L(v)$ is the same list for each $v \in V(G)$.  Now, we define a graph $G$ to be \emph{strong k-chromatic-choosable} if $\chi(G) = k$ and every bad $(k-1)$-assignment for $G$ is constant. One will note that strong 1-chromatic-choosable graphs are graphs with at least one vertex and no edges which is rather uninteresting.  Therefore, unless otherwise noted, we will focus our attention on strong $k$-chromatic-choosable graphs with $k \geq 2$.  It is easy to see that when $k \geq 2$, a graph, $G$, is strong $k$-chromatic-choosable if and only if $\chi(G) > k-1$ and and every bad $(k-1)$-assignment for $G$ is constant.  We will often use this characterization of strongly chromatic-choosable in the proofs below.

We introduce some terminology that will be useful here as well as later in the paper. Suppose that $G_1$ and $G_2$ are two arbitrary vertex disjoint graphs.  The \emph{disjoint union} of the graphs $G_1$ and $G_2$, denoted $G_1 + G_2$, is the graph with vertex set $V(G_1) \cup V(G_2)$ and edge set $E(G_1) \cup E(G_2)$.  The \emph{join} of the graphs $G_1$ and $G_2$, denoted $G_1 \vee G_2$, is the graph consisting of $G_1$, $G_2$, and additional edges added so that each vertex in $G_1$ is adjacent to each vertex in $G_2$.  It is well known that $\chi(G_1 \vee G_2)=\chi(G_1)+\chi(G_2)$. Dirac~\cite{D52} showed $G_1 \vee G_2$ is $(k_1+k_2)$-critical if and only if $G_i$ is $k_i$-critical for $i=1,2$.

\begin{pro} \label{pro: obvious} Suppose $G$ is a strong $k$-chromatic-choosable graph.  Then,
\\
(i)  $\chi_{\ell}(G)=k$ (i.e. $G$ is chromatic-choosable); \\
(ii)  If $L$ is a list assignment for $G$ with $|L(v)| \geq k-1$ for each $v \in V(G)$ and $L$ is not a constant $(k-1)$-assignment, then $G$ is $L$-colorable; \\
(iii)  $G \vee K_p$ is strong $(k+p)$-chromatic-choosable for any $p \in \N$; \\
(iv)  For any $v \in V(G)$, $\chi(G-\{v\}) \leq \chi_{\ell}(G - \{v\})<k$; \\
(v)  $k=2$ if and only if $G$ is $K_2$; \\
(vi)  $k=3$ if and only if $G$ is an odd cycle.
\end{pro}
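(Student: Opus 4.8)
The plan is to establish part (ii) first and then deduce everything else from it together with the definition. For (ii), suppose $L$ assigns each vertex a list of size at least $k-1$ and is not a constant $(k-1)$-assignment. I would build a ``sub-assignment'' $L'$ with $L'(v)\subseteq L(v)$ and $|L'(v)|=k-1$ for every $v$, chosen so that $L'$ is \emph{not} constant, and then invoke the definition (a non-constant $(k-1)$-assignment cannot be bad) to get a proper $L'$-coloring, which is also a proper $L$-coloring. To see that a non-constant such $L'$ exists, fix two distinct vertices $v_1,v_2$ (they exist since $\chi(G)=k\ge 2$): one can always pick $(k-1)$-subsets $L'(v_1)\subseteq L(v_1)$ and $L'(v_2)\subseteq L(v_2)$ with $L'(v_1)\ne L'(v_2)$, and this is forced to be impossible only when $|L(v_1)|=|L(v_2)|=k-1$ and $L(v_1)=L(v_2)$; if that held for \emph{every} pair, $L$ would be a constant $(k-1)$-assignment, contrary to hypothesis. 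The remaining coordinates of $L'$ are filled in arbitrarily. Part (i) is then immediate: any $k$-assignment has all lists of size $k\ge k-1$ and is certainly not a constant $(k-1)$-assignment, so (ii) gives $\chi_\ell(G)\le k$, while $\chi_\ell(G)\ge\chi(G)=k$ always.

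For (iii) I would induct on $p$, the inductive step being the claim that if $H$ is strong $j$-chromatic-choosable with $j\ge 2$, then $H\vee K_1$ is strong $(j+1)$-chromatic-choosable; since $\chi(H\vee K_1)=j+1$ and $(H\vee K_{p-1})\vee K_1=H\vee K_p$, this claim (with base case $p=1$) finishes (iii). Let $u$ be the universal vertex of $H\vee K_1$ and let $L$ be a bad $j$-assignment. For each $c\in L(u)$, using color $c$ on $u$ must fail to extend, so $H$ is not colorable from the lists $L(v)\setminus\{c\}$ $(v\in V(H))$; these have size at least $j-1$, so by (ii) applied to $H$ they form a constant $(j-1)$-assignment, which forces $c\in L(v)$ and forces $L(v)\setminus\{c\}$ to be the same set for all $v\in V(H)$, hence all the lists $L(v)$, $v\in V(H)$, equal a common set $T$ with $c\in T$. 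As this holds for every $c\in L(u)$ and $|L(u)|=|T|=j$, we get $L(u)=T$ too, so $L$ is constant. The only fussy point is checking that $T$ does not depend on $c$, which holds because $T=L(v)$ for any fixed $v\in V(H)$.

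Parts (iv) and (v) are short consequences. For (iv), given a $(k-1)$-assignment $L$ of $G-v$, extend it to $G$ by giving $v$ a $(k-1)$-set different from the list of some fixed vertex of $G-v$ (possible since $k\ge 2$ and $G$ has at least two vertices); the result is a non-constant $(k-1)$-assignment of $G$, hence colorable by (ii)/the definition, and restricting the coloring shows $G-v$ is $(k-1)$-choosable, so $\chi(G-v)\le\chi_\ell(G-v)\le k-1<k$. For (v), the ``if'' direction is the direct observation that the only bad $1$-assignments of $K_2$ are the constant ones (and $\chi(K_2)=2$); the ``only if'' direction uses (iv): $\chi_\ell(G-v)<2$ forces $G-v$ to be edgeless for every $v$, so if $xy$ is an edge of $G$ (one exists as $\chi(G)=2$) there can be no further vertex $w$ (else $xy$ survives in $G-w$), whence $V(G)=\{x,y\}$ and $G=K_2$.

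For (vi), the ``if'' direction reduces to showing that a non-constant $2$-assignment $L$ on a cycle $v_0v_1\cdots v_{n-1}$ is $L$-colorable. I would pick an edge $v_{n-1}v_0$ with $L(v_{n-1})\ne L(v_0)$, choose $c^\ast\in L(v_{n-1})\setminus L(v_0)$, set $f(v_{n-1})=c^\ast$, greedily color $v_{n-2},v_{n-3},\dots,v_1$ (each list has size $2$), and finally color $v_0$ from $L(v_0)\setminus\{f(v_1)\}$; this last choice automatically avoids $c^\ast$ since $c^\ast\notin L(v_0)$, so every edge is satisfied. Combined with $\chi(C_{2l+1})=3$ and the standard fact that the constant $2$-assignment is bad for an odd cycle, this shows odd cycles are strong $3$-chromatic-choosable. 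For ``only if'', let $C$ be a shortest odd cycle of $G$ (it exists since $\chi(G)=3$); a chord of $C$ would produce a shorter odd cycle, so $C$ is induced. If $V(G)\ne V(C)$, assign the list $\{1,2\}$ to every vertex of $C$, the list $\{2,3\}$ to one vertex outside $C$, and $\{1,2\}$ to all remaining vertices: this is a non-constant $2$-assignment that is bad (it cannot be colored on $C$), contradicting strong $3$-chromatic-choosability; hence $V(G)=V(C)$ and, $C$ being induced, $G=C$, an odd cycle. Of the six parts, (vi) carries essentially all the content, so I expect the main care in the write-up to go into the cycle-coloring construction for the ``if'' direction and into verifying that a shortest odd cycle is induced for the ``only if'' direction; the rest is bookkeeping with the definition and part (ii).
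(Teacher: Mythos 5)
Your proof is correct. Parts (i), (ii), (iv), and (v) coincide with the paper's argument (the paper proves (i) first by the same list-trimming device and then reuses it for (ii); proving (ii) first, as you do, is an immaterial reordering). You diverge genuinely in (iii) and (vi). For (iii) the paper argues constructively for general $p$ in one pass: given a non-constant $(k+p-1)$-assignment it colors the $p$ clique vertices with distinct colors first --- splitting into the cases where $L$ restricted to $V(G)$ is non-constant or constant, and in the latter case spending on a clique vertex a color outside the common list so that the residual lists on $G$ retain size $k$ --- and then invokes (ii). You instead induct on $p$ and analyze a bad assignment by contraposition: each choice of color $c$ on the apex forces the residual assignment on $H$ to be the constant $(j-1)$-assignment, which pins all lists on $V(H)$ to one set $T$ and then forces $L(u)=T$ by cardinality. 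Your route is shorter and isolates the single-vertex join as the atomic step at the cost of an induction; the paper's is uniform in $p$ and rehearses the constructive coloring style used throughout the rest of the paper. For (vi) the paper gives no details beyond ``similar to (v)'' --- the intended route presumably passes through (iv), since $\chi_{\ell}(G-\{v\})<3$ for all $v$ means every vertex lies on every odd cycle, collapsing $G$ onto its (necessarily chordless) shortest odd cycle. Your version supplies the missing content directly and correctly: the greedy coloring of a cycle from a non-constant $2$-assignment starting at an edge with distinct lists for the ``if'' direction, and the explicit bad non-constant $2$-assignment ($\{1,2\}$ on the shortest odd cycle, $\{2,3\}$ on an outside vertex) for the ``only if'' direction.
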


\begin{proof}
For all 6 of the statements, we have that $k \geq 2$ and hence $\chi(G) > 1$.  So, $G$ has at least one edge, and we let $V(G)= \{v_1, \ldots, v_n \}$ where $n \geq 2$ throughout.

\par

For (i), suppose that $L$ is an arbitrary $k$-assignment for $G$.  We form a new list assignment, $L'$, for $G$ by deleting one color from $L(v_1)$ and one color from $L(v_2)$ so that $L'(v_1) \neq L'(v_2)$.  We then arbitrarily delete one color from each list associated with any remaining vertices.  Now, $L'$ is a non-constant $(k-1)$-assignment for $G$, and since $G$ is strong $k$-chromatic-choosable, there must be a proper $L'$-coloring for $G$.  This coloring is also a proper $L$-coloring for $G$, and we have that: $k = \chi(G) \leq \chi_{\ell}(G) \leq k.$

\par

For (ii), note that we are immediately done if $L$ is a $(k-1)$-assignment since in this case $L$ cannot be constant.  If $L$ is not a $(k-1)$-assignment we proceed as we did in the proof of statement (i), that is, we delete colors from lists so as to obtain a non-constant $(k-1)$-assignment.

\par

For (iii), one can follow the proof of a similar statement found in~\cite{ST09}.  We will present a slightly longer proof that is similar in flavor to some of the arguments we make later on in the paper.  Let $G'=G \vee K_p$ and suppose that $\{w_1, \ldots, w_p \}$ is the vertex set of the copy of $K_p$ joined to $G$ to form $G'$.  We know that $\chi(G')=\chi(G)+p=k+p$.  Now, suppose that $L$ is an arbitrary non-constant $(k+p-1)$-assignment for $G'$.  We will prove that there is a proper $L$-coloring for $G'$ in two cases: (1) There exists $w,u \in V(G)$ such that $L(w) \neq L(u)$ and (2) $L(v)=A$ for each $v \in V(G)$.  In (1) for each $i$ we color $w_i$ with $c_i$ so that $c_i \in L(w_i)$ and $c_i \neq c_j$ whenever $i \neq j$ (this is possible since each list contains at least $p$ colors).  Now, for each $v \in V(G)$ we let $L'(v)=L(v)-\{c_1, \ldots c_p \}$.  We note that $L'$ is a list assignment for $G$ with the property that $|L'(v)| \geq k-1$ for each $v \in V(G)$.  Since $L(w) \neq L(u)$, $L'$ is not a constant $(k-1)$-assignment.  Statement (ii) then implies that we can find a proper $L'$-coloring for $G$.  Thus, we can complete a proper $L$-coloring for $G'$.  For case (2), we note that since $L$ is non-constant, there must be some $c_1 \in \bigcup_{i=1}^p L(w_i)-A$.  Without loss of generality, suppose that $c_1 \in L(w_1)-A$.  We color $w_1$ with $c_1$.  Then, for each $i \geq 2$ we color $w_i$ with $c_i$ so that $c_i \in L(w_i)$ and $c_i \neq c_j$ whenever $i \neq j$.  Now, for each $v \in V(G)$ we let $L'(v)=L(v)-\{c_1, \ldots c_p \}$.  We note that $L'$ is a list assignment for $G$ with the property that $|L'(v)| \geq k$ for each $v \in V(G)$.  Statement (ii) then implies that we can find a proper $L'$-coloring for $G$, and this means we can complete a proper $L$-coloring for $G'$.

\par

For (iv), the first inequality is obvious.  So, we need only show the second inequality.  Suppose $v$ is an arbitrary vertex of $G$.  Let $G' = G - \{v\}$, and suppose that $L'$ is an arbitrary $(k-1)$-assignment for $G'$.  Now, let $L$ be the $(k-1)$-assignment for $G$ obtained by letting $L(x) = L'(x)$ for each $x \in V(G')$ and letting $L(v)$ be a list of $(k-1)$ colors such that $L(v) \neq L(y)$ for some $y \in V(G')$.  Then, $L$ is a non-constant $(k-1)$-assignment for $G$, and we know there is a proper $L$-coloring, $c$, for $G$.  Then, we define a coloring, $c'$, for $G'$ by letting $c'(x)=c(x)$ for each $x \in V(G')$.  Clearly $c'$ is a proper $L'$-coloring for $G'$.  Since $L'$ was arbitrary, it follows that $\chi_{\ell}(G') \leq k-1 < k.$

\par

For (v), note that if $G=K_2$, then $G$ is strong 2-chromatic-choosable since $\chi(G)=2$, and $G$ is $L$-colorable whenever $L$ is a non-constant 1-assignment for $G$.  Conversely, if $G$ is strong 2-chromatic-choosable, the fact that $G$ must be a copy of $K_2$ follows from Statement (iv). The proof of Statement (vi) is similar to the proof of Statement (v).
\end{proof}

As discussed in Section 1.2, this notion of strongly chromatic-choosable graphs is in fact an extension of an older notion of criticality defined by Stiebitz, Tuza, and Voigt~\cite{ST09}. They define a graph $G$ to be \emph{strong k-critical} if $G$ is $k$-critical and every bad $(k-1)$-assignment for $G$ is constant.  One will immediately note that every strong $k$-critical graph is strong $k$-chromatic-choosable. Moreover, a graph is strong $k$-critical if and only if the graph is $k$-critical and strong $k$-chromatic-choosable.  The strong $k$-critical graphs are the same as the strong $k$-chromatic-choosable graphs when $k=2,3$. We will show below that there exist strong $k$-chromatic-choosable graphs that are not strong $k$-critical for all $k \geq 4$.

Steibitz et al.~\cite{ST09} show that the complete graph, $K_k$, is strong $k$-critical, and that odd cycles are strong 3-critical. Like Proposition~\ref{pro: obvious}(iii) above, they also showed that: If $G$ is a strong $k$-critical graph, then the graph $G' = G \vee K_p$ is strong $(k+p)$-critical. They also construct two other types of strongly critical graphs. For $k \geq 3$ a \emph{$D_k$-graph} is a graph, $G$, whose vertex set consists of three non-empty pairwise disjoint sets $X$, $Y_1$, and $Y_2$ with $|Y_1|+|Y_2|=|X|+1=k-1$ and two additional vertices, $x_1$ and $x_2$, such that $X$ and $Y_1 \cup Y_2$ are cliques in $G$ not joined by any edge and $x_i$ is adjacent to each vertex in $X \cup Y_i$ for $i=1,2$.  We write $G=D_k(X,Y_1,Y_2,x_1,x_2)$.  Similarly, for $k \geq 3$ a \emph{$E_k$-graph} is a graph, $G$, whose vertex set consists of four non-empty pairwise disjoint sets $X_1$, $X_2$, $Y_1$, and $Y_2$ with $|Y_1|+|Y_2|=|X_1|+|X_2|=k-1$ and $|X_2|+|Y_2| \leq k-1$, and one additional vertex, $z$, such that $X=X_1 \cup X_2$ and $Y=Y_1 \cup Y_2$ are cliques in $G$, $z$ is adjacent to each vertex in $X_1 \cup Y_1$, and $x \in X$ is adjacent to $y \in Y$ if and only if $x \in X_2$ and $y \in Y_2$.  We write $G=E_k(X_1,X_2,Y_1,Y_2,z)$.  See examples 7 and 8 of~\cite{ST09} for a proof of: For $k \geq 3$, all $D_k$ graphs and $E_k$ graphs are strong $k$-critical.

\subsection{Constructions}

Let's start with a simple construction based on the Dirac-Haj{\'o}s construction (see~\cite{SS89}). Specifically, suppose that $G_1$ and $G_2$ are vertex disjoint copies of $K_k$ with $k \geq 3$, $V(G_1)=\{v_1, \ldots, v_k \}$, and $V(G_2)=\{u_1, \ldots, u_k \}$.  For $i$ satisfying $1 \leq i \leq k-2$, we form a copy, $G$, of $K_k \Delta^{(i)} K_k$ by identifying $v_1$ with $u_1$, $v_2$ with $u_2$, $\ldots$, and $v_i$ with $u_i$, deleting the edges $v_1v_{i+1}$ and $u_1u_{i+1}$, and adding the edge $u_{i+1}v_{i+1}$.  One should note that for $i \geq 2$, $G$ is a copy of $(K_{k-i+1} \Delta^{(1)} K_{k-i+1}) \vee K_{i-1}$.  Also, for $i=1$, $G=E_k(\{v_3, \ldots, v_k \}, \{v_2\}, \{u_3, \ldots, u_k \}, \{u_2\}, u_1)$.  So, $G$ is strong $k$-critical when $i=1$. Then, applying the join procedure, $K_k \Delta^{(i)} K_k$ is strong $k$-critical when $i \geq 2$.

So we have several large families of graphs, such as odd cycles, complete graphs, $D_k$ graphs, $E_k$ graphs, and $K_k \Delta^{(i)} K_k$, that are strongly critical graphs, and hence, are also strongly chromatic-choosable. We will now focus on constructing some nontrivial examples of strongly chromatic-choosable graphs that are not strongly critical.  We begin with a lemma that is a useful tool for such constructions.

\begin{lem} \label{lem: construct}  Let $G$ be a strong $k$-chromatic-choosable graph. Let $A, B \subseteq V(G)$ such that $A \cup B = V(G)$ and $C=A \cap B$ with $|A|, |B| > |C|$, $0 <|C| \leq 3$ when $k$ is even and $0 < |C| \leq 4$ when $k$ is odd.  Form $G'$ by adding vertices $u$ and $s$ to $G$, and edges so that $u$ is adjacent to every vertex in $A$ and $s$ is adjacent to every vertex in $B$.  If $\chi(G') > k$, then $G'$ is strong $(k+1)$-chromatic-choosable.
\end{lem}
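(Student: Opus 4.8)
The plan is to verify directly that $G'$ is strong $(k+1)$-chromatic-choosable, using the characterization noted in the text: since we are told $\chi(G') > k$, it suffices to show that every bad $k$-assignment for $G'$ is constant. So let $L$ be a $k$-assignment for $G'$ such that $G'$ is not $L$-colorable, and aim for a contradiction unless all lists coincide. The natural first move is to exploit the two new vertices $u$ and $s$: pick a color $c_u \in L(u)$ and a color $c_s \in L(s)$ (subject to some care so that they behave well on the overlap $C$), and consider the list assignment $L'$ on $G$ obtained by deleting $c_u$ from $L(a)$ for every $a \in A$ and $c_s$ from $L(b)$ for every $b \in B$. A vertex in $C = A \cap B$ could lose up to two colors, so in general $L'$ has lists of size at least $k-2$ on $C$ and at least $k-1$ elsewhere; this is where the smallness hypothesis on $|C|$ (and its parity dependence) will be used. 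If $L'$ turns out to be a list assignment that is \emph{not} a constant $(k-1)$-assignment for $G$, then Proposition~\ref{pro: obvious}(ii) hands us a proper $L'$-coloring of $G$, which extends to a proper $L$-coloring of $G'$ by giving $u$ color $c_u$ and $s$ color $c_s$ — contradicting that $L$ is bad. Hence we must be in the situation where no choice of $c_u, c_s$ avoids producing a constant $(k-1)$-assignment on $G$, and the goal is to show this forces $L$ itself to be constant on $V(G')$.

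The main technical work, then, is a case analysis showing that ``$L'$ is always a constant $(k-1)$-assignment for $G$'' (over all admissible choices of $c_u$, $c_s$) forces all the original lists to be equal. First one should argue that on $C$ the two deleted colors must in fact coincide for the counting to work out — i.e. $c_u = c_s$ on the overlap — which will constrain $L(u)$ and $L(s)$ relative to the common list $A_0$ (say) that $L'$ takes on $G$. Using $|A| > |C|$ and $|B| > |C|$ we have vertices in $A \setminus B$ and $B \setminus A$, so $L(a) = A_0 \cup \{c_u\}$ for all $a \in A \setminus C$ and similarly with $c_s$ for $b \in B \setminus C$; playing with different admissible choices of $c_u$ (there are at least $k - |A_0 \cap L(u)|$-many, and one wants to show this is forced to be small) pins down $L(u)$. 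The parity split $|C| \le 3$ versus $|C| \le 4$ is presumably exactly the threshold at which ``for every admissible pair $(c_u,c_s)$, $L'$ is constant'' still has enough room to derive a genuine constraint rather than a vacuous one — concretely, it should come from a pigeonhole/counting estimate on how many distinct constant $(k-1)$-assignments $G$ could simultaneously be forced into, which depends on $k$ through $k-1$ being even or odd.

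The step I expect to be the main obstacle is precisely handling the vertices of $C$ where lists can shrink to size $k-2$: one must show that whenever $|C| \le 3$ (or $\le 4$), one can \emph{choose} $c_u$ and $c_s$ so that $L'$ restricted to $C$ still has size $\ge k-1$, \emph{unless} the lists are already so rigidly aligned that $L$ is forced to be constant. This is a local combinatorial selection problem — essentially a small system-of-distinct-representatives / Hall-type argument on at most four vertices — and getting the bookkeeping right (including the degenerate subcases where $L(u)$ or $L(s)$ meets $A_0$ in many colors, and the interaction between the choices on $A \cap C$ versus $B \cap C$) is the delicate part. Everything else — extending an $L'$-coloring of $G$ to $G'$, invoking Proposition~\ref{pro: obvious}(ii), and reducing to the ``bad $k$-assignment is constant'' criterion — is routine.
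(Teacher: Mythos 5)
Your skeleton is the same as the paper's: fix a non-constant $k$-assignment $L$ for $G'$ (equivalently, argue the contrapositive from a bad one), color $u$ and $s$ first, delete their colors from the lists on $A$ and $B$ respectively, and invoke Proposition~\ref{pro: obvious}(ii) on $G$; and you correctly locate the danger at the vertices of $C$, which can lose two colors. But the proposal has a genuine gap at exactly the step you flag as ``the main obstacle'': the argument that a good pair $(c_u,c_s)$ exists is not supplied, and your guess at why the bound on $|C|$ depends on parity is wrong. It is not a Hall/SDR selection problem, nor does it have to do with counting constant $(k-1)$-assignments or with the parity of $k-1$. The paper first splits on whether $L(u)\cap L(s)=\emptyset$. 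If not, take $c_u=c_s$ and every vertex of $G$ loses at most one color. If so, consider the $k^2$ ordered pairs in $L(u)\times L(s)$: a single $k$-element list $L(v_i)$ with $v_i\in C$ ``contains'' at most $|L(v_i)\cap L(u)|\cdot|L(v_i)\cap L(s)|\leq\lceil k/2\rceil\lfloor k/2\rfloor$ of them, which equals $k^2/4<k^2/3$ for even $k$ and $(k^2-1)/4<k^2/4$ for odd $k$. Hence $|C|\leq 3$ (resp.\ $4$) lists cannot cover all $k^2$ pairs, and an uncovered pair $(c_u,c_s)$ strips at most one color from every vertex of $C$. The parity enters through how evenly a $k$-set can be split between the two disjoint lists $L(u)$ and $L(s)$ --- a pigeonhole count on pairs, not a matching argument.

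The second gap is that even after every vertex loses at most one color, the reduced assignment $L'$ can be a \emph{constant} $(k-1)$-assignment, and then Proposition~\ref{pro: obvious}(ii) does not apply; your plan to conclude ``$L$ is constant on $V(G')$'' stalls here, because $L$ being constant on $V(G)$ does not force it to be constant on $\{u,s\}$, and in that situation you must still exhibit a coloring. The paper needs separate devices for this: when $L(u)\cap L(s)\neq\emptyset$ and $L$ is constant on $V(G)$, it recolors $u$ with a color of $L(u)$ outside the common list and deletes $c_s$ only from $B$, using $|A|>|C|$ to keep the result non-constant; when $L(u)\cap L(s)=\emptyset$ it splits further on whether $I=\bigcap_{v\in C}L(v)$ has size at least $k-1$, and in the large-intersection case it prepares \emph{two} candidate colors $c',c''\in L(u)$ and shows (again via $|A|>|C|$) that at least one of the two resulting assignments is non-constant. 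None of these steps is long, but they are where the hypotheses $|A|,|B|>|C|$ actually get used, and your outline does not account for them.
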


Note that for this lemma we must have $k \geq 3$. This is because finding such an $A$ and $B$ would be impossible in the case that $k=2$ since $G$ would only have two vertices in this case by Proposition~\ref{pro: obvious} (v).

\begin{proof}  For this proof we will let $C=\{v_1, \ldots, v_m \}$.  Suppose that $L$ is an arbitrary non-constant $k$-assignment for $G'$.  In order to show that $G'$ is strong $(k+1)$-chromatic-choosable, we must show $G'$ is $L$-colorable.  We know that $G=G'-\{u, s \}$.  Now, we know that exactly one of the following two cases must hold: (1) $L(u) \cap L(s) \neq \emptyset$ or (2) $L(u) \cap L(s) = \emptyset$.  We will construct a proper $L$-coloring of $G'$ in each of these cases.  In both cases our general strategy is the same: first color $u$ and $s$, then use the fact that $G$ is strong $k$-chromatic-choosable to complete a proper coloring.

\par

For case (1) suppose that $c_1 \in L(u) \cap L(s)$.  We color both $u$ and $s$ with $c_1$.  Now, for each vertex $v \in V(G)$, let
$$L'(v) = L(v) - \{c_1\}.$$
\noindent We notice that $|L'(v)| \geq k-1$ for each $v \in V(G)$.  Also, we know that $L'$ is either a constant $(k-1)$-assignment for $G$ or it is not.  In the case that $L'$ is not a constant $(k-1)$-assignment for $G$, we know that we can complete a proper $L$-coloring for $G'$ by Proposition~\ref{pro: obvious} Statement (ii).  Now, consider the case that $L'$ is a constant $(k-1)$-assignment for $G$.  In this case, $L(v)$ must be the same for each $v \in V(G)$.  Let $W=L(v)$ for each $v \in V(G)$.  Since $L$ is a non-constant $k$-assignment, this means that $L(u)$ or $L(s)$ must be different from $W$.  Without loss of generality suppose $c_2 \in L(u)-W$.  We recolor vertex $u$ with $c_2$.  Now, for each vertex $v \in V(G)$, let
\[ L''(v) = \begin{cases}
      L(v) - \{c_1\} & \textrm{ if $v$ is a neighbor of $s$ in $G$} \\
      L(v) & \textrm{ if $v$ is not a neighbor of $s$ in $G$} \\
   \end{cases} \]
\noindent  We note that $|L''(v)| \geq k-1$ for each $v \in V(G)$.  Since $|A| \geq |C|+1$, there is at least one vertex in $V(G)$ that is not a neighbor of $s$.  So, we have that $L''$ is not a constant $(k-1)$-assignment for $G$.  Thus, there is a proper $L''$ coloring for $G$ by Proposition~\ref{pro: obvious} Statement (ii), and we can complete a proper $L$-coloring for $G'$.

\par

Now, we turn our attention to case (2).  For this case let $I=\bigcap_{i=1}^m L(v_i)$.  We consider two sub-cases: (a) $|I| \geq k-1$ and (b) $|I| < k-1$ (\emph{Note:} when $m=1$, $|I|=k$ and we are in sub-case (a)).  For (a), suppose $U=\{c_1, \ldots, c_{k-1}\} \subseteq I$.  Since $L(u)$ and $L(s)$ are disjoint and have at least $k$ elements we know that each of these lists contain a color not in $U$, and at least one of the lists contains at least two colors not in $U$.  Without loss of generality suppose that $c', c'' \in L(u)-U$ and $c''' \in L(s)-U$.  Now, for each $v \in V(G)$ let
\[ L'(v) = \begin{cases}
      L(v) - \{c'\} & \textrm{ if $v \in A-C$} \\
      L(v) - \{c'''\} & \textrm{ if $v \in B-C$} \\
			L(v) - \{c',c'''\} & \textrm{if $v \in C$}\\
   \end{cases} \]
\noindent and
\[ L''(v) = \begin{cases}
      L(v) - \{c''\} & \textrm{ if $v \in A-C$ } \\
      L(v) - \{c'''\} & \textrm{ if $v \in B-C$ } \\
			L(v) - \{c'',c'''\} & \textrm{if $v \in C.$}\\
   \end{cases} \]
\noindent We note that $|L'(v)| \geq k-1$ and $|L''(v)| \geq k-1$ for each $v \in V(G)$.  Moreover, since $|A| \geq |C|+1$, we know that at least one of $L'$ and $L''$ is not a constant $(k-1)$-assignment.  Without loss of generality suppose that $L'$ is not a constant $(k-1)$-assignment for $G$.  Then, color $u$ with $c'$ and $s$ with $c'''$, and since there is a proper $L'$ coloring for $G$ by Proposition~\ref{pro: obvious} Statement (ii), we can complete a proper $L$-coloring for $G'$.

\par

Now, we turn our attention to sub-case (b).  Suppose that $L(u)= \{c_1, \ldots, c_k \}$ and $L(s)= \{c_{k+1}, \ldots, c_{2k} \}$.  We notice that if we are to find an $L$-coloring of $G'$, then we have $k^2$ possible ways to color $u$ and $s$.  We can think of each of these $k^2$ possibilities as an ordered pair in the set $P=\{(c_i,c_j)|i \in \{1,\ldots,k\}, j \in \{k+1,\ldots,2k\} \}$.  We refer to each of the $k^2$ possibilities in this set as a \emph{color pair}.  We say that a list of colors \emph{contains a color pair} if it contains both colors that make up the coordinates of the pair.  Now, we claim that there must be some color pair that is not contained in any of the lists: $L(v_1), \ldots, L(v_m)$.  To see why this is so, note that for any $i$ ($1 \leq i \leq m$) $L(v_i)$ contains at most $\lceil \frac{k}{2} \rceil \lfloor \frac{k}{2} \rfloor$ color pairs.  We note that $\lceil \frac{k}{2} \rceil \lfloor \frac{k}{2} \rfloor < \frac{k^2}{3}$ when $k$ is even and $\lceil \frac{k}{2} \rceil \lfloor \frac{k}{2} \rfloor < \frac{k^2}{4}$ when $k$ is odd.  So, since $|C| \leq 3$ when $k$ is even and $|C| \leq 4$ when $k$ is odd, we have that there must be some color pair that is not contained in any of the lists: $L(v_1), \ldots, L(v_m)$.  Without loss of generality suppose that $(c_1,c_{k+1})$ is not contained in any of these lists.  We color $u$ with $c_1$ and $s$ with $c_{k+1}$ and for each $v \in V(G)$ we let
\[ L'(v) = \begin{cases}
      L(v) - \{c_1\} & \textrm{ if $v \in A-C$} \\
      L(v) - \{c_{k+1}\} & \textrm{ if $v \in B-C$} \\
			L(v) - \{c_1,c_{k+1}\} & \textrm{if $v \in C$.}\\
   \end{cases} \]
\noindent Since no list associated with a vertex in $C$ contains the color pair $(c_1,c_{k+1})$, we have that $|L'(v)| \geq k-1$ for each $v \in V(G)$.  Also, since $|I| < k-1$, we know that $L'$ is not a constant $(k-1)$-assignment.   Thus, there is a proper $L'$ coloring for $G$ by Proposition~\ref{pro: obvious} Statement (ii), and we can complete a proper $L$-coloring for $G'$.
\end{proof}

We will now use this lemma to produce some examples of strong $k$-chromatic-choosable graphs.

\begin{pro} \label{pro: addtoodd}
Suppose $C$ is an odd cycle with vertices (in cyclic order): $v_1, v_2, v_3, v_4, \ldots, v_{2l+1}$ where $l \geq 2$.  Suppose $m \in \N$ is such that $m \leq 2l-2$.  We construct the graph $G_{l,m,1}$ as follows: Add vertices $u_1$ and $s_1$ to $C$, and add edges so that $u_1$ is adjacent to each vertex in $\{v_j | 1 \leq j \leq 2+m \}$ and so that $s_1$ is adjacent to each vertex in $V(C)-\{v_1,v_2 \}$.  Then, $G_{l,m,1}$ is strong $4$-chromatic-choosable whenever $m \leq 4$.
\end{pro}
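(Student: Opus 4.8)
The plan is to apply Lemma~\ref{lem: construct} with $G = C$, which is strong $3$-chromatic-choosable by Proposition~\ref{pro: obvious}(vi), so here $k = 3$. Take $A = \{v_j : 1 \le j \le 2+m\}$ (the set of neighbors of $u_1$) and $B = V(C) \setminus \{v_1, v_2\} = \{v_3, v_4, \ldots, v_{2l+1}\}$ (the set of neighbors of $s_1$); then $G_{l,m,1}$ is exactly the graph $G'$ produced from $C$ by the construction in Lemma~\ref{lem: construct}. Since $m \ge 1$, we have $A \cup B = V(C)$ and $A \cap B = \{v_3, \ldots, v_{2+m}\}$, so $|A \cap B| = m$, $|A| = m+2 > m$, and $|B| = 2l - 1 > m$ because $m \le 2l - 2$. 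As $k = 3$ is odd, the lemma requires $0 < |A \cap B| \le 4$, and this holds precisely because $1 \le m \le 4$. Thus every hypothesis of Lemma~\ref{lem: construct} is met except the requirement $\chi(G') > k$, i.e.\ $\chi(G_{l,m,1}) \ge 4$, which is the one thing left to verify.

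For that, I would argue by contradiction: suppose $\phi$ is a proper $3$-coloring of $G_{l,m,1}$ and, without loss of generality, $\phi(v_1) = 1$ and $\phi(v_2) = 2$. Since $u_1$ is adjacent to both $v_1$ and $v_2$, $\phi(u_1) = 3$; and since $u_1$ is adjacent to the whole path $v_1 v_2 \cdots v_{2+m}$, the colors along this path avoid $3$, hence alternate between $1$ and $2$, giving $\phi(v_j) = 1$ for odd $j$ and $\phi(v_j) = 2$ for even $j$ with $1 \le j \le 2+m$; in particular $\phi(v_3) = 1$. Now $s_1$ is adjacent to $v_3$, so $\phi(s_1) \ne 1$, and $s_1$ is adjacent to the whole path $v_3 v_4 \cdots v_{2l+1}$, so the colors along this path avoid $\phi(s_1) \in \{2,3\}$; being a $2$-coloring of a path starting at $\phi(v_3) = 1$, it again forces $\phi(v_j) = 1$ for every odd $j$ with $3 \le j \le 2l+1$. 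In particular $\phi(v_{2l+1}) = 1 = \phi(v_1)$, contradicting $v_{2l+1} v_1 \in E(C)$. Hence $\chi(G_{l,m,1}) \ge 4$, and Lemma~\ref{lem: construct} then yields that $G_{l,m,1}$ is strong $4$-chromatic-choosable.

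The routine verifications of the size conditions on $A$, $B$, and $A \cap B$ present no difficulty; the only real content is the non-$3$-colorability argument, and even there the key idea — that the two added vertices $u_1$ and $s_1$ force the cycle's $3$-coloring to collapse to an alternating $2$-coloring, which an odd cycle cannot sustain — is essentially the only obstacle. One point worth flagging is that the argument genuinely uses $m \ge 1$ (so that $v_3 \in A$ and the two forced alternating patterns on the two overlapping paths are compatible), which is consistent with the lemma's requirement that $|A \cap B| > 0$.
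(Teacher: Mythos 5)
Your proposal is correct and follows essentially the same route as the paper: both invoke Lemma~\ref{lem: construct} with the identical choice of $A$ and $B$, and both reduce the problem to showing $\chi(G_{l,m,1})>3$ via the observation that a $2$-coloring forced on each of the paths $v_1v_2v_3$ and $v_3v_4\cdots v_{2l+1}$ (by the dominating vertices $u_1$ and $s_1$) would make $v_1$ and $v_{2l+1}$ receive the same color. The only difference is cosmetic: the paper phrases this as ``one of the two paths must use three colors in any proper coloring of $C$,'' while you argue by contradiction from a hypothetical $3$-coloring of $G_{l,m,1}$.
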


At this point it may seem strange that we attach an additional parameter of ``1" to $G_{l,m,1}$.  The reason for this will be made clear shortly.

\begin{proof}
Throughout this proof we assume $m \leq 4$.  We first note that for any proper coloring of $C$, the path $P_1$ given by $v_1, v_2, v_3$ or the path $P_2$ given by $v_3, v_4, \ldots, v_{2l+1}$ must be colored with at least three colors.  To see why this is so note that if $P_1$ and $P_2$ were both colored with only two colors $v_1$ and $v_{2l+1}$ would receive the same color as $v_3$.  So, we have that $\chi(G_{l,m,1}) > 3$.  Now, let $A= \{v_j | 1 \leq j \leq 2+m \}$ and $B=V(C)-\{v_1,v_2 \}$.  We note that $A \cup B = V(C)$, $|A \cap B| = m \leq 4$.  Also, since $v_1 \notin A \cap B$, $|A| > |A \cap B|$.  Since $m \leq 2l-1$, we know $v_{2l+1} \notin A \cap B$ and $|B| > |A \cap B|$.  Thus, Lemma~\ref{lem: construct} immediately implies that $G_{l,m,1}$ is strong $4$-chromatic-choosable when $m \leq 4$.
\end{proof}

It is easy to verify that $G_{l,1,1}$ is $4$-critical.  Thus, $G_{l,1,1}$ is strong $4$-critical.  Since $G_{l,2,1}$, $G_{l,3,1}$, and $G_{l,4,1}$ contains $G_{l,1,1}$ as a subgraph, we have that these three graphs are not $4$-critical.  Thus, we have our first examples of strongly chromatic-choosable graphs that are not strongly critical.  For $p \geq 1$ and $2 \leq m \leq 4$, note that Proposition~\ref{pro: obvious} Statement (iii) implies that $G_{l,m,1} \vee K_p$ is strong $(4+p)$-chromatic-choosable, yet $G_{l,m,1} \vee K_p$ is not $(4+p)$-critical.  So, there exist strong $k$-chromatic-choosable graphs that are not strong $k$-critical for $k \geq 4$.  We will now illustrate one more application of Lemma~\ref{lem: construct} by inductively extending the idea of Proposition~\ref{pro: addtoodd}.  We postpone its proof until the appendix.

\begin{pro} \label{pro: addaddtoodd} For $k \in \N$ and $m \in \{1,2,3\}$ we construct $G_{l,m,k}$ inductively as follows.  For $k=1$, $G_{l,m,k}$ is the graph constructed in the statement of Proposition~\ref{pro: addtoodd}.  For $k \geq 2$ we construct $G_{l,m,k}$ from $G_{l,m,k-1}$ as follows.  We add vertices $u_k$ and $s_k$ to $G_{l,m,k-1}$ and we add edges so that $u_k$ is adjacent to $\{u_j | 1 \leq j \leq k-1 \} \cup \{v_j | 1 \leq j \leq 2+m \}$ and so that $s_k$ is adjacent to $\{s_j | 1 \leq j \leq k-1 \} \cup (V(C)-\{v_1,v_2 \})$.  Then, $G_{l,m,k}$ is strong $(3+k)$-chromatic-choosable.  Moreover, $G_{l,m,k}$ is not strong $(3+k)$-critical when $m=2,3$.
\end{pro}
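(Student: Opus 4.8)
The plan is to induct on $k$. The base case $k=1$ is precisely Proposition~\ref{pro: addtoodd}, which applies here since $m\le 3\le 4$ (we keep the standing assumption $m\le 2l-2$ from that proposition). For the inductive step, fix $k\ge 2$ and assume $G_{l,m,k-1}$ is strong $(k+2)$-chromatic-choosable. I would then invoke Lemma~\ref{lem: construct} with $G=G_{l,m,k-1}$ (so the parameter in the lemma is $k+2$), the two new vertices being $u_k$ and $s_k$, and
\[
A=\{u_j:1\le j\le k-1\}\cup\{v_j:1\le j\le 2+m\},\qquad B=\{s_j:1\le j\le k-1\}\cup\bigl(V(C)\setminus\{v_1,v_2\}\bigr),
\]
which are exactly the sets of neighbors of $u_k$ and of $s_k$ in $G_{l,m,k-1}$, so that the graph $G'$ produced by the lemma is $G_{l,m,k}$. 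The combinatorial hypotheses are routine to verify: $A\cup B=V(G_{l,m,k-1})$ since $m\ge1$ forces $v_3\in A$ and hence the $v$-part of $A\cup B$ is all of $V(C)$; $C:=A\cap B=\{v_3,v_4,\dots,v_{2+m}\}$, so $0<|C|=m\le 3$, which satisfies the hypothesis of the lemma in both the even and odd cases; and $|A|>|C|$ (witnessed by $v_1$) and $|B|>|C|$ (witnessed by $v_{2l+1}$, using $m\le 2l-2$). Granting the one remaining hypothesis of the lemma, namely $\chi(G_{l,m,k})>k+2$, Lemma~\ref{lem: construct} yields that $G_{l,m,k}$ is strong $(k+3)$-chromatic-choosable, i.e.\ strong $(3+k)$-chromatic-choosable, closing the induction.

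So the real work — and the step I expect to be the main obstacle — is the chromatic bound $\chi(G_{l,m,k})\ge k+3$, which I would prove directly rather than by induction. By construction $\{u_1,\dots,u_k\}$ and $\{s_1,\dots,s_k\}$ are cliques, every $u_i$ is adjacent to $v_1,v_2,v_3$ (as $m\ge1$), and every $s_i$ is adjacent to all of $V(C)\setminus\{v_1,v_2\}$. Suppose $c$ were a proper $(k+2)$-coloring. Since $\{u_1,\dots,u_k,v_1,v_2\}$ is a $(k+2)$-clique, $c$ uses all $k+2$ colors on it; let $\gamma$ be the color missing from $\{c(u_1),\dots,c(u_k),c(v_2)\}$, so $c(v_1)=\gamma$. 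As $v_3$ is adjacent to $u_1,\dots,u_k$ and to $v_2$, we are forced to have $c(v_3)=\gamma$ as well. Now the $s_i$ use $k$ distinct colors and each is adjacent to all of $v_3,v_4,\dots,v_{2l+1}$, so these vertices receive only the two colors outside $\{c(s_1),\dots,c(s_k)\}$; thus the path $v_3v_4\cdots v_{2l+1}$, which has the even number $2l-2$ of edges, is properly $2$-colored and its endpoints agree, giving $c(v_{2l+1})=c(v_3)=\gamma$. But $v_1v_{2l+1}\in E(C)$ and $c(v_1)=\gamma$ — a contradiction. Hence $\chi(G_{l,m,k})\ge k+3>k+2$. (For $k=1$ this recovers $\chi(G_{l,m,1})>3$, consistent with Proposition~\ref{pro: addtoodd}.) The only delicacy is making sure the two obstructions — one forced through the $u_i$'s, one through the $s_i$'s — communicate correctly via $v_3$, and that the parity of the path $v_3\cdots v_{2l+1}$, equivalently the oddness of $C$, is genuinely used.

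Finally, for the ``moreover'' clause, suppose $m\in\{2,3\}$. Then $G_{l,1,k}$ is a proper spanning subgraph of $G_{l,m,k}$: the two graphs have the same vertex set and the same adjacencies, except that in $G_{l,1,k}$ each $u_j$ is joined only to $\{v_1,v_2,v_3\}$ whereas in $G_{l,m,k}$ it is joined to the strictly larger set $\{v_1,\dots,v_{2+m}\}$. By the argument of the previous paragraph applied with $m=1$, $\chi(G_{l,1,k})\ge k+3$, and since $\chi(G_{l,m,k})=k+3$ (the graph being strong $(k+3)$-chromatic-choosable by the first part), we conclude $\chi(G_{l,1,k})=\chi(G_{l,m,k})$. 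A graph possessing a proper subgraph of the same chromatic number cannot be $(k+3)$-critical, and hence $G_{l,m,k}$ is not strong $(3+k)$-critical. This is the same mechanism used after Proposition~\ref{pro: addtoodd} to see that $G_{l,2,1},G_{l,3,1},G_{l,4,1}$ fail to be $4$-critical.
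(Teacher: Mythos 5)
Your proposal is correct and follows essentially the same route as the paper's proof: induct on $k$, verify the hypotheses of Lemma~\ref{lem: construct} for the stated $A$ and $B$ (with $|A\cap B|=m\le 3$), establish $\chi(G_{l,m,k})>k+2$ by using the $k$-cliques $\{u_1,\dots,u_k\}$ and $\{s_1,\dots,s_k\}$ to force the paths $v_1v_2v_3$ and $v_3v_4\cdots v_{2l+1}$ each onto at most two colors and then derive a contradiction from the odd cycle, and obtain the non-criticality claim from $G_{l,1,k}$ being a proper subgraph of the same chromatic number. Your unpacking of the chromatic bound is just a more explicit version of the paper's one-line appeal to the fact that $P_1$ or $P_2$ must receive at least three colors.
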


{\bf Historical Remarks:}  We have defined strongly chromatic-choosable graphs using the language of list coloring.  Researchers have studied questions closely related to list coloring via amenable colorings (see~\cite{BK90} and~\cite{MR97}).  In fact, we could have defined strong $k$-chromatic-choosable graph by using the language of amenable colorings for $k \geq 2$.  We briefly explain how this would work by following the definitions in~\cite{MR97}.  Suppose that $G$ is a graph and suppose that $m>j>0$.  For every $v \in V(G)$, let $R(v)$ be a $j$-element subset of $\{1,2, \ldots, m \}$, and suppose that $R$ is not constant.  Such an $R$ is called a \emph{$(j,m)$-restraint}.  An \emph{R-amenable} coloring of $G$ is a proper coloring, $f: V(G) \rightarrow \{1,2, \ldots, m \}$, such that $f(v) \in \{1,2, \ldots, m \} - R(v)$ for each $v \in V(G)$.  We say that $G$ is \emph{$(j,m)$-amenable} if there is an $R$-amenable coloring of $G$ for every possible $(j,m)$-restraint, $R$.  For $k \in \N$ we let $J_k(G)$ be a nonnegative integer or $\infty$ such that if $j$ is a positive integer, $G$ is $(j,j+k)$-amenable if and only if $j \leq J_k(G)$.  Now, we have that $G$ is strong $k$-chromatic-choosable if and only if $\chi(G) > k-1$ and $J_{k-1} (G) = \infty$.

\par

In July 2001 Mohar~\cite{M01} made an interesting conjecture that can be stated in terms of strong chromatic-choosability. Suppose $G$ is a graph with maximum degree $\Delta(G)$.  We say $G$ is \emph{$k$-edge-colorable} if its edges can be colored with $k$ colors such that any two incident edges receive different colors. We say that $G$ is \emph{$(\Delta(G)+1)$-edge-critical} if it is $(\Delta(G)+1)$-edge-colorable but every subgraph of $G$ in which at least one edge of $G$ is not present is $\Delta(G)$-edge-colorable.  In terms of strong chromatic-choosability Mohar's conjecture may be stated as follows.

\begin{conj}[\cite{M01}] \label{conj: Mohar}
Suppose $G$ is a $(\Delta(G)+1)$-edge-critical graph, and let $L(G)$ be the line graph of $G$.  Then, $L(G)$ is strong $(\Delta(G)+1)$-chromatic-choosable.
\end{conj}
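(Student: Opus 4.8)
The statement is a conjecture, so what follows is a plan of attack rather than a finished argument. First I would dispose of the chromatic-number requirement: since $G$ is $(\Delta(G)+1)$-edge-critical we have $\chi'(G)=\Delta(G)+1$, and because proper edge-colorings of $G$ are exactly proper vertex-colorings of $L(G)$, this gives $\chi(L(G))=\Delta(G)+1$. Writing $k=\Delta(G)+1$, the characterization of strong chromatic-choosability recorded just after its definition reduces the conjecture to a single claim: every bad $(k-1)$-assignment for $L(G)$ is constant. Dualizing to the edge-coloring side and taking the contrapositive, the task becomes: if $L$ is a $\Delta(G)$-assignment to the edges of $G$ whose lists are not all identical, then $G$ has a proper edge-coloring choosing each edge's color from its list. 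I would stress at the outset that proving this also establishes $\chi'_\ell(G)=\chi'(G)=\Delta(G)+1$ for every edge-critical Class $2$ graph, via Proposition~\ref{pro: obvious}(i); thus the conjecture contains a nontrivial instance of the List Coloring Conjecture and cannot be easier than that instance.

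The non-constancy hypothesis can be localized. Edge-critical graphs are connected (indeed $2$-connected), so $L(G)$ is connected, and a non-constant $(k-1)$-assignment must give different lists to two adjacent vertices of $L(G)$, that is, to two edges $e_0,f_0$ of $G$ meeting at a common vertex. Fix a color $c_0\in L(e_0)\setminus L(f_0)$. The guiding idea is that this single surplus color is exactly the freedom missing from the constant assignment, where no proper edge-coloring exists simply because $\chi'(G)>\Delta(G)$; committing $e_0$ to $c_0$ and deleting $c_0$ from the lists of the edges incident with $e_0$ should be enough to repair the one obstruction that criticality has concentrated in the graph.

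For the extension itself I would attempt a kernel argument in the style of Galvin's proof~\cite{G95} of the List Coloring Conjecture for bipartite multigraphs. Fix a proper $(\Delta(G)+1)$-edge-coloring $\phi$ of $G$ and use it to orient $L(G)$: at each vertex $x$ of $G$ the edges incident with $x$ are linearly ordered by $\phi$, and I direct every edge of $L(G)$ toward the endpoint carrying the larger $\phi$-value. The plan is to show that, after committing $e_0\mapsto c_0$ and passing to the reduced lists, the resulting oriented subgraph of $L(G)$ has out-degrees bounded by one less than the available list sizes and is kernel-perfect, so that the Bondy--Boppana--Siegel kernel lemma underlying Galvin's theorem produces the required coloring. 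For bipartite $G$ this is precisely Galvin's theorem and goes through verbatim; the difficulty in general is entirely due to the odd cycles of $G$, which create vertices of out-degree $\Delta(G)$ and break kernel-perfectness.

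The decisive obstacle is therefore handling those odd substructures, and this is where I expect the argument to stall and a genuinely new idea to be needed. The natural leverage is criticality: Vizing's Adjacency Lemma forces vertices of small degree in a $\Delta$-critical graph to be surrounded by many degree-$\Delta(G)$ vertices, which severely restricts where kernel-perfectness can fail, and one would hope that the seed color $c_0$ together with a single Vizing-fan and Kempe-chain rotation around each offending odd cycle suffices to recolor past the failure. A more inductive alternative is to first prove that $G-e_0$ (which is $\Delta(G)$-edge-colorable by criticality) is actually $\Delta(G)$-edge-choosable from the reduced lists, and then reinsert $e_0$ using $c_0$; but reconciling $e_0$'s constraints with a coloring of $G-e_0$ is again exactly the odd-cycle difficulty in disguise. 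Since a full solution would settle a case of the List Coloring Conjecture, I would treat the statement as genuinely open: the reduction and localization above are solid, the Galvin orientation is the right framework, and the failure of kernel-perfectness at odd cycles is the single point on which the whole proof turns.
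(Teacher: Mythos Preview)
The paper does not prove this statement; it is recorded there precisely as an open conjecture of Mohar, with no accompanying argument. So there is no ``paper's proof'' to compare your proposal against, and you are right to present a plan of attack rather than a finished proof.

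Your reductions are sound. The identification $\chi(L(G))=\chi'(G)=\Delta(G)+1$ is immediate from edge-criticality, and the paper's characterization of strong $k$-chromatic-choosability (just after the definition) does reduce the conjecture to the contrapositive you state: every non-constant $\Delta(G)$-edge-assignment admits a proper list edge-coloring. Your observation via Proposition~\ref{pro: obvious}(i) that the conjecture entails $\chi'_\ell(G)=\chi'(G)$ for every $(\Delta+1)$-edge-critical graph is correct and important, since it shows the conjecture is at least as hard as this instance of the List Coloring Conjecture; this is exactly why the paper leaves it open. The localization to two incident edges with distinct lists is also correct, using that edge-critical graphs are $2$-connected and hence have connected line graphs. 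Where your plan honestly stalls---kernel-perfectness of the Galvin orientation in the presence of odd cycles---is the genuine obstacle, and you identify it accurately; no argument in the paper gets past it either.
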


\section{Non-unique List Colorability and the Edge Condition} \label{unique}

We begin with a standard definition.  Suppose $G$ is a graph, and consider a function $f: V(G) \rightarrow \N$.  We say that $G$ is \emph{$f$-choosable} if $G$ is $L$-colorable whenever $|L(v)| \geq f(v)$ for each $v \in V(G)$.  Akbari, Mirrokni, and Sadjad~\cite{AM06} extended some of the ideas of Alon and Tarsi~\cite{AN92} and proved the following two results.

\begin{thm}[\cite{AM06}] \label{thm: akbari}
Suppose that $G$ is a graph with $n$ vertices and $m$ edges and assume that $f: V(G) \rightarrow \N$ is a function with $\sum_{v \in V(G)} f(v)=m+n$.  If there is a list assignment, $L$, for $G$ such that $|L(v)|=f(v)$ for each $v \in V(G)$ and there is a unique proper $L$-coloring for $G$, then $G$ is $f$-choosable.
\end{thm}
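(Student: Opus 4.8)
The plan is to deduce this from the algebraic method of Alon and Tarsi~\cite{AN92} that~\cite{AM06} builds on. Fix a linear order $v_1<\cdots<v_n$ of $V(G)$ and form the \emph{graph polynomial} $P_G(x_1,\dots,x_n)=\prod_{v_iv_j\in E(G),\,i<j}(x_i-x_j)$; it has total degree exactly $m$, and for colors $c_1,\dots,c_n$ we have $P_G(c_1,\dots,c_n)\ne 0$ if and only if $v_i\mapsto c_i$ is a proper coloring of $G$. Since $\sum_i\big(f(v_i)-1\big)=(m+n)-n=m=\deg P_G$, the Combinatorial Nullstellensatz (in the form established in~\cite{AN92}) tells us: \emph{if the coefficient of $\prod_i x_i^{f(v_i)-1}$ in $P_G$ is nonzero, then for any lists $M$ with $|M(v_i)|\ge f(v_i)$ there is a point of $\prod_i M(v_i)$ at which $P_G$ does not vanish}, i.e.\ a proper $M$-coloring --- which is exactly $f$-choosability. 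So the whole task reduces to proving that this single coefficient of $P_G$ is nonzero.

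For that I would use the coefficient formula: for \emph{any} list assignment $L$ with $|L(v_i)|=f(v_i)$,
\[
\left[\,\prod_i x_i^{f(v_i)-1}\,\right]P_G \;=\; \sum_{c\,\in\,\prod_i L(v_i)}\ \frac{P_G(c)}{\prod_i\prod_{a\in L(v_i)\setminus\{c_i\}}(c_i-a)} .
\]
This is obtained by first reducing $P_G$ modulo the ideal generated by the univariate polynomials $g_i(x_i)=\prod_{a\in L(v_i)}(x_i-a)$ (each of degree $f(v_i)$): the remainder $\tilde P$ agrees with $P_G$ on the grid $\prod_i L(v_i)$, satisfies $\deg_{x_i}\tilde P\le f(v_i)-1$ for every $i$, and --- because $\deg P_G$ equals $\sum_i\big(f(v_i)-1\big)$ --- has the same coefficient on $\prod_i x_i^{f(v_i)-1}$ as $P_G$ does; then iterated univariate Lagrange interpolation, applied one variable at a time (legitimate precisely because $\deg_{x_i}\tilde P\le f(v_i)-1$), extracts that coefficient as the weighted sum over the grid displayed above. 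One may also simply cite this formula from~\cite{AN92}.

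Finally I would apply the formula to the given $L$. Every $c$ that is not a proper $L$-coloring contributes $P_G(c)=0$, so only proper $L$-colorings survive in the sum; by hypothesis there is exactly one, say $c^\star$, and its term is $P_G(c^\star)\big/\prod_i\prod_{a\in L(v_i)\setminus\{c^\star_i\}}(c^\star_i-a)$, which is nonzero because the numerator is nonzero ($c^\star$ is proper) and the denominator is a product of differences of distinct colors. Hence $\big[\prod_i x_i^{f(v_i)-1}\big]P_G\ne 0$, and the first paragraph delivers $f$-choosability. I expect the main obstacle to be the coefficient formula, and inside it the verification that the target coefficient survives reduction modulo $\langle g_1,\dots,g_n\rangle$ (a degree-counting argument that uses $\deg P_G=\sum_i(f(v_i)-1)$); everything after that is bookkeeping. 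One should also note tacitly that $f(v_i)\ge 1$ for all $i$, since otherwise the hypothesis of a (unique) proper $L$-coloring is vacuous.
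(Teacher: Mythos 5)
This theorem is not proved in the paper at all --- it is imported verbatim from Akbari, Mirrokni, and Sadjad~\cite{AM06}, so there is no in-paper argument to compare against. Your proof is correct and is essentially the argument underlying the cited source: the degree count $\sum_i(f(v_i)-1)=m=\deg P_G$ ensures the target coefficient survives reduction modulo $\langle g_1,\dots,g_n\rangle$, the Lagrange-interpolation coefficient formula turns the unique proper $L$-coloring into a single nonvanishing term, and the Alon--Tarsi/Combinatorial Nullstellensatz conclusion then yields $f$-choosability; the only tacit points (colors embedded as distinct field elements so that $P_G(c)\neq 0$ characterizes proper colorings, and $f(v)\geq 1$) are harmless.
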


\begin{cor}[\cite{AM06}] \label{cor: akbari}
Suppose that $G$ is a graph with $n$ vertices and $m$ edges and assume that $f: V(G) \rightarrow \N$ is a function with $\sum_{v \in V(G)} f(v) > m+n$.  Then, there is no list assignment, $L$, for $G$ such that $|L(v)|=f(v)$ for each $v \in V(G)$ and $G$ has a unique proper $L$-coloring.
\end{cor}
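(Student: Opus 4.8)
The plan is to argue via the polynomial method of Alon and Tarsi~\cite{AN92} (in the form used in~\cite{AM06}), rather than trying to squeeze the Corollary out of Theorem~\ref{thm: akbari} directly. (One's first instinct is to delete colors from the lists of $L$ until the total list size is exactly $m+n$, keeping the colors used by the unique coloring, and then apply Theorem~\ref{thm: akbari}; but that only tells us $G$ is $f$-choosable, which does not contradict uniqueness, so a different approach is needed.) Fix an ordering $v_1,\ldots,v_n$ of $V(G)$ and let $f_G = \prod_{v_iv_j \in E(G),\, i<j}(x_i - x_j)$ be the graph polynomial, which has total degree exactly $m$. If $f(v)=0$ for some $v$ the statement is trivial (no list of size $0$ admits any proper coloring), so assume $f(v)\ge 1$ for all $v$ and set $d_v := f(v)-1 \ge 0$; then the hypothesis $\sum_v f(v) > m+n$ is exactly $\sum_v d_v > m = \deg f_G$.

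Suppose for contradiction that there is a list assignment $L$ with $|L(v)| = f(v)$ for every $v$ and a unique proper $L$-coloring $c$. Work over $\mathbb{C}$ and identify each list $L(v)$ with a set $A_v \subseteq \mathbb{C}$ of $|A_v| = d_v+1$ distinct numbers. The tool I would invoke is the interpolation (``Coefficient'') formula underlying the Combinatorial Nullstellensatz: for any polynomial $P$ with $\deg P \le \sum_v d_v$ and any such sets $A_v$,
\[
\bigl[\textstyle\prod_v x_v^{d_v}\bigr]\, P \;=\; \sum_{a \in \prod_v A_v} \frac{P(a)}{\prod_v \prod_{b \in A_v \setminus \{a_v\}}(a_v - b)} .
\]
Apply this with $P = f_G$, which is legitimate since $\deg f_G = m \le \sum_v d_v$. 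On the left, because $\sum_v d_v > m = \deg f_G$, the monomial $\prod_v x_v^{d_v}$ does not occur in $f_G$ at all, so the left-hand side equals $0$. On the right, every denominator $\prod_{b \in A_v\setminus\{a_v\}}(a_v-b)$ is a nonzero product of differences of distinct numbers, and $f_G(a) \ne 0$ exactly when $a \in \prod_v A_v$ encodes a proper $L$-coloring of $G$; hence the right-hand side is a sum of explicitly nonzero terms indexed by the proper $L$-colorings of $G$. Since $c$ is the unique proper $L$-coloring, this sum has exactly one term and is therefore nonzero — contradicting that the left-hand side is $0$. Thus no such $L$ can exist.

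The step I expect to require the most care is the invocation of the Coefficient Formula, which is not literally among the results quoted earlier in the excerpt: I would either cite it from~\cite{AN92} (and its use in~\cite{AM06}), or prove it on the spot by the standard Lagrange-interpolation argument, expanding each $x_v^{d_v}$ in the basis $\bigl\{\prod_{b \in A_v\setminus\{t\}}\tfrac{x_v-b}{t-b} : t \in A_v\bigr\}$ and collecting coefficients. Everything else is routine bookkeeping: moving to a field of characteristic $0$ and naming the colors as distinct field elements, verifying the degree inequality $\deg f_G = m \le \sum_v d_v$ so that the formula applies, and handling the degenerate case $f(v)=0$. It is worth remarking that this argument is in fact lighter than the proof of Theorem~\ref{thm: akbari}: that theorem needs the full Combinatorial Nullstellensatz to promote ``one nonzero point of $f_G$ on the box $\prod_v A_v$'' to ``a nonzero point on every box of the same dimensions,'' whereas here we only use the trivial fact that a monomial of degree exceeding $\deg f_G$ has coefficient $0$, together with the interpolation identity.
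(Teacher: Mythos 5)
Your argument is correct, but note that the paper offers no proof of Corollary~\ref{cor: akbari} at all---it is quoted from~\cite{AM06} alongside Theorem~\ref{thm: akbari}---so you are supplying a self-contained argument where the source derives it from the theorem. On that point, your parenthetical reason for abandoning the ``first instinct'' is mistaken: the deletion route does work, with one extra step. Shrink $L$ to $L'$ with $|L'(v)|=f'(v)$, $\sum_v f'(v)=m+n$, $L'(v)\subseteq L(v)$ and $c(v)\in L'(v)$; since every proper $L'$-coloring is a proper $L$-coloring, $c$ is still the unique proper $L'$-coloring, so Theorem~\ref{thm: akbari} makes $G$ $f'$-choosable. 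Now pick a vertex $u$ with $f(u)>f'(u)$ and replace $L'(u)$ by an $f'(u)$-subset of $L(u)$ avoiding $c(u)$ (possible since $|L(u)|\ge f'(u)+1$); $f'$-choosability produces a proper coloring from these lists, which is a proper $L$-coloring differing from $c$ at $u$---contradicting uniqueness. Your polynomial proof is a genuinely different and valid route: the identity $0=\bigl[\prod_v x_v^{d_v}\bigr]f_G=\sum_a f_G(a)/\prod_v\prod_{b\in A_v\setminus\{a_v\}}(a_v-b)$ shows directly that the number of proper $L$-colorings is either $0$ or at least $2$, and it needs only the degree observation plus the interpolation identity rather than the full Nullstellensatz machinery behind Theorem~\ref{thm: akbari}. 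The trade-off is that the Coefficient Formula is not among the results quoted in the paper, so you must either cite it (it is standard, in the tradition of~\cite{AN92}) or prove it via the Lagrange-interpolation argument you sketch; your handling of the degenerate case $f(v)=0$ and of the identification of colors with distinct field elements is fine.
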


Generally speaking, the key to proving some of our list coloring results is that certain types of strong $k$-chromatic-choosable graphs have at least two proper colorings for non-constant $(k-1)$-assignments.

\begin{lem} \label{lem: notunique}
Suppose that $G$ is a strong $k$-chromatic-choosable graph with $n$ vertices and $m$ edges.  Suppose that $L$ is a list assignment for $G$ such that $|L(v)| \geq k-1$ for each $v \in V(G)$ and $L$ is not a constant $(k-1)$-assignment for $G$.  If $m \leq n(k-2)$, then there are at least two proper $L$-colorings for $G$.
\end{lem}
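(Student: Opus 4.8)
The plan is to argue by contradiction, using the two results of Akbari, Mirrokni, and Sadjad (Theorem~\ref{thm: akbari} and Corollary~\ref{cor: akbari}). First, since $L$ is a non-constant list assignment with $|L(v)| \geq k-1$ for every $v \in V(G)$, Proposition~\ref{pro: obvious}(ii) guarantees that $G$ has at least one proper $L$-coloring; call it $c$. Suppose, toward a contradiction, that $c$ is the \emph{only} proper $L$-coloring of $G$.

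The next step is a reduction to lists of size exactly $k-1$. For each $v \in V(G)$ choose $L'(v) \subseteq L(v)$ with $c(v) \in L'(v)$ and $|L'(v)| = k-1$ (possible since $|L(v)| \geq k-1 \geq 1$). Every proper $L'$-coloring of $G$ is also a proper $L$-coloring, so $c$ remains the unique proper $L'$-coloring of $G$. Define $f \colon V(G) \to \N$ by $f(v) = k-1$ for all $v$, so that $\sum_{v \in V(G)} f(v) = n(k-1) = (m+n) + \big(n(k-2) - m\big)$, and recall the hypothesis $m \leq n(k-2)$.

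Now I split into two cases according to whether the edge condition is strict. If $m < n(k-2)$, then $\sum_{v} f(v) = n(k-1) > m+n$, while $L'$ is a list assignment with $|L'(v)| = f(v)$ for every $v$ for which $G$ has a unique proper coloring; this directly contradicts Corollary~\ref{cor: akbari}. If instead $m = n(k-2)$, then $\sum_v f(v) = n(k-1) = m+n$, so Theorem~\ref{thm: akbari} applies to the list assignment $L'$ and yields that $G$ is $f$-choosable. Since $f$ is the constant function $k-1$, this says precisely that $G$ is $(k-1)$-choosable, i.e. $\chi_\ell(G) \leq k-1$; but Proposition~\ref{pro: obvious}(i) gives $\chi_\ell(G) = k$, a contradiction. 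In either case the assumption that $c$ is the unique proper $L$-coloring is untenable, so $G$ has at least two proper $L$-colorings.

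The main thing to get right is the handling of the boundary case $m = n(k-2)$: there the pure counting statement (Corollary~\ref{cor: akbari}) is no longer available, and one must instead feed the hypothetical unique coloring into Theorem~\ref{thm: akbari} itself to obtain $(k-1)$-choosability, and then contradict it using the chromatic-choosability $\chi_\ell(G)=k$ from Proposition~\ref{pro: obvious}(i). The shrinking step is routine, and non-constancy of $L'$ need not be checked since neither of the Akbari-type results requires it. Everything else is bookkeeping.
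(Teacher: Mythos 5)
Your proof is correct and follows essentially the same route as the paper: assume a unique proper $L$-coloring, set $f \equiv k-1$, use Corollary~\ref{cor: akbari} when $m+n < n(k-1)$ and Theorem~\ref{thm: akbari} to get $(k-1)$-choosability (contradicting $\chi_\ell(G)=k$) when $m+n = n(k-1)$. The only difference is cosmetic: you shrink the lists to size $k-1$ around the assumed unique coloring $c$, whereas the paper reduces at the outset to the case of a non-constant $(k-1)$-assignment; both reductions are valid.
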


\begin{proof}  Suppose that $L$ is a non-constant $(k-1)$-assignment for $G$ and $m \leq n(k-2)$.  It suffices to show that there are at least two proper $L$-colorings for $G$.  By the definition of strong $k$-chromatic-choosable we know that there is a proper $L$-coloring for $G$ which we will call $c$.  For the sake of contradiction suppose that $c$ is the unique proper $L$-coloring for $G$.  We assume $f: V(G) \rightarrow \N$ is given by the rule $f(v)=k-1$.  Note that if we have $m+n < n(k-1)$, Corollary~\ref{cor: akbari} implies that there is not a unique proper $L$-coloring for $G$ which is a contradiction.  So, we may assume $m+n = n(k-1)$.  Since $c$ is the unique proper $L$-coloring for $G$, we have that $G$ is $f$-choosable by Theorem~\ref{thm: akbari}.  This implies $\chi(G) \leq \chi_{\ell}(G) \leq k-1$.  This is a contradiction, and the proof is complete.\end{proof}

For the remainder of this paper, we say that a strong $k$-chromatic-choosable graph, $G$, with $n$ vertices and $m$ edges satisfies the \emph{edge condition} if $m \leq n(k-2)$.  Notice that the edge condition is violated for all strong $k$-chromatic-choosable graphs with $k \leq 2$.  It is easy to verify that odd cycles and complete graphs on at least three vertices satisfy the edge condition.  This means that all strong $3$-chromatic-choosable graphs satisfy the edge condition.  The strong $k$-chromatic-choosable graphs described in Section 2 all satisfy the edge condition.

\begin{pro} \label{pro: edgeDkandEk}
For $k \geq 3$, $D_k$ and $E_k$ graphs satisfy the edge condition.
\end{pro}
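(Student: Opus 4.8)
The plan is to prove this by a direct edge count in each family, using the observation that both $D_k$ and $E_k$ graphs have exactly $n = 2k-1$ vertices; this reduces the edge condition to checking $m \le (2k-1)(k-2) = 2k^2 - 5k + 2$ in each case.

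For a $D_k$-graph $G = D_k(X,Y_1,Y_2,x_1,x_2)$, I would first record from the definition that $|X| = k-2$ and $|Y_1| + |Y_2| = k-1$, so $n = (k-2) + (k-1) + 2 = 2k-1$. The edges split into four groups: the clique on $X$ contributes $\binom{k-2}{2}$ edges, the clique on $Y_1 \cup Y_2$ contributes $\binom{k-1}{2}$ edges, the edges from $x_1$ to $X \cup Y_1$ contribute $(k-2) + |Y_1|$ edges, and the edges from $x_2$ to $X \cup Y_2$ contribute $(k-2) + |Y_2|$ edges (there is no edge $x_1 x_2$). Using the identity $\binom{k-2}{2} + \binom{k-1}{2} = (k-2)^2$, this gives $m = (k-2)^2 + 2(k-2) + (k-1) = k^2 - k - 1$, independent of how $Y_1, Y_2$ are sized. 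Then $(2k-1)(k-2) - m = (2k^2 - 5k + 2) - (k^2 - k - 1) = k^2 - 4k + 3 = (k-1)(k-3) \ge 0$ for every $k \ge 3$, which is exactly the edge condition (with equality at $k=3$).

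For an $E_k$-graph $G = E_k(X_1,X_2,Y_1,Y_2,z)$, I would set $a = |X_2|$ and $b = |Y_2|$; the defining constraints give $|X_1| = k-1-a$, $|Y_1| = k-1-b$, and $n = (k-1) + (k-1) + 1 = 2k-1$. Again the edges fall into four groups: the clique on $X = X_1 \cup X_2$ (size $k-1$) contributes $\binom{k-1}{2}$ edges, the clique on $Y$ (size $k-1$) contributes another $\binom{k-1}{2}$, the edges from $z$ to $X_1 \cup Y_1$ contribute $|X_1| + |Y_1| = 2(k-1) - a - b$ edges, and the complete bipartite graph between $X_2$ and $Y_2$ contributes $ab$ edges. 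Summing, $m = (k-1)(k-2) + 2(k-1) - a - b + ab = k^2 - k - a - b + ab$, so $(2k-1)(k-2) - m = (k^2 - 4k + 3) - (ab - a - b + 1) = (k-1)(k-3) - (a-1)(b-1)$. Since $X_1$ and $Y_1$ are non-empty, $1 \le a \le k-2$ and $1 \le b \le k-2$, hence $0 \le (a-1)(b-1) \le (k-3)^2 \le (k-1)(k-3)$ for $k \ge 3$, giving $m \le (2k-1)(k-2)$.

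There is essentially no obstacle here beyond bookkeeping: the only point needing a moment's care is enumerating the edge sets exactly as in the definitions (in particular, that $x_1 x_2$ is a non-edge of a $D_k$-graph, and that $z$ is joined only to $X_1 \cup Y_1$ in an $E_k$-graph), together with recalling, in the $E_k$ case, that the non-emptiness of $X_1$ and $Y_1$ forces $|X_2|, |Y_2| \le k-2$, which is precisely what keeps the term $(a-1)(b-1)$ small enough. Note that the extra hypothesis $|X_2| + |Y_2| \le k-1$ from the definition of an $E_k$-graph is not even needed for this argument.
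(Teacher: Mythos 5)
Your calculation is correct, and it is exactly the ``careful calculation'' the paper omits: both families have $n=2k-1$ vertices, the edge counts $m=k^2-k-1$ for $D_k$ and $m=k^2-k-(a-1)(b-1)-1+(a-1)(b-1)-\cdots$ reduce to the slack $(k-1)(k-3)$ and $(k-1)(k-3)-(a-1)(b-1)$ respectively, and the non-emptiness of $X_1,Y_1$ gives $a,b\le k-2$ as needed. Nothing further is required.
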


The proof follows by careful calculation.  The two recipes for construction that we gave in Section 2 also preserve the edge condition.


\begin{pro} \label{pro: edgeconstruct}
Suppose that $G$ is a strong $k$-chromatic-choosable graph with $n$ vertices and $m$ edges that satisfies the edge condition.  Then, the following two statements hold.
\\
\noindent (i) For $p \geq 1$, $G \vee K_p$ satisfies the edge condition, and
\\
\noindent (ii)  If $G'$ is constructed from $G$ so that it satisfies the hypotheses of Lemma~\ref{lem: construct}, then $G'$ satisifes the edge condition.
\end{pro}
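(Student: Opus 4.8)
The plan is to verify both parts by a direct count, the only subtlety being to keep track of which instance of the edge condition we must check: for each constructed graph we first invoke an earlier result to identify its "strong $k'$-chromatic-choosable" parameter $k'$, and then check $m' \le n'(k'-2)$ for the new vertex count $n'$ and edge count $m'$. Recall that since $G$ is strong $k$-chromatic-choosable and satisfies the edge condition, $k \ge 3$.

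For (i), Proposition~\ref{pro: obvious}(iii) gives that $G \vee K_p$ is strong $(k+p)$-chromatic-choosable. Counting, $G \vee K_p$ has $n+p$ vertices and $m + \binom{p}{2} + np$ edges (the edges of $G$, the edges of $K_p$, and the $np$ join edges). So the edge condition we must establish is $m + \binom{p}{2} + np \le (n+p)(k+p-2)$. Using $m \le n(k-2)$ it suffices to show $n(k-2) + \binom{p}{2} + np \le (n+p)(k+p-2)$; expanding the right-hand side as $n(k-2) + np + p(k+p-2)$, this reduces to $\binom{p}{2} \le p(k+p-2)$, and dividing by $p \ge 1$ this is $\tfrac{p-1}{2} \le k+p-2$, i.e. $0 \le 2k+p-3$, which holds since $k \ge 3$ and $p \ge 1$.

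For (ii), $G'$ is built by adding $u$ adjacent to all of $A$ and $s$ adjacent to all of $B$, where $A \cup B = V(G)$ and $C = A \cap B$; since $\chi(G') > k$, Lemma~\ref{lem: construct} gives that $G'$ is strong $(k+1)$-chromatic-choosable (and $k \ge 3$ as noted after that lemma). Now $G'$ has $n+2$ vertices and $m + |A| + |B|$ edges, and since $|A| + |B| = |A \cup B| + |A \cap B| = n + |C|$, this is $m + n + |C|$ edges. So we must check $m + n + |C| \le (n+2)(k-1)$. Using $m \le n(k-2)$, the left-hand side is at most $n(k-1) + |C|$, while the right-hand side equals $n(k-1) + 2(k-1)$, so it suffices to have $|C| \le 2(k-1)$. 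By the hypotheses of Lemma~\ref{lem: construct} we have $|C| \le 4$ when $k$ is odd and $|C| \le 3$ when $k$ is even; since $k \ge 3$ the odd case gives $2(k-1) \ge 4$ and the even case (so $k \ge 4$) gives $2(k-1) \ge 6$, so the required inequality holds.

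I do not expect a genuine obstacle here: both parts are routine arithmetic once the vertex and edge counts are written down correctly and the shifted choosability parameters ($k+p$ and $k+1$) are tracked. The one point worth emphasizing in the write-up is that the decisive input in (ii) is exactly the bound on $|C|$ that is already part of the hypotheses of Lemma~\ref{lem: construct} — this is what keeps the inequality valid rather than merely almost-valid — and that the restriction $k \ge 3$ (equivalently, that $G$ itself satisfies the edge condition) is used in both parts.
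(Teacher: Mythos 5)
Your proof is correct and follows essentially the same route as the paper: in (i) it reduces the inequality to $0 \le 2k+p-3$ after invoking Proposition~\ref{pro: obvious}(iii), and in (ii) it bounds the number of new edges by $n+|C| \le n+4$ and uses $k \ge 3$, exactly as the paper does (the paper simply states these computations as "easy to obtain" rather than writing them out).
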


\begin{proof}
We know that $m \leq n(k-2)$.  For (i),  we know from Proposition~\ref{pro: obvious} Statement (iii) that $G \vee K_p$ is strong $(k+p)$-chromatic-choosable, and $G \vee K_p$ has $n+p$ vertices and $m+np+ \frac{p(p-1)}{2}$ edges.  It is easy to show that $\frac{p(p+1)}{2} \leq kp+p^2-p$.  Then, if we combine this fact with the fact that $m \leq n(k-2)$, it is easy to obtain $m+ np + \frac{p(p-1)}{2} \leq (n+p)(k+p-2)$.
\par
For (ii), we know from Lemma~\ref{lem: construct} that $G'$ is strong $(k+1)$-chromatic-choosable and $k \geq 3$.  Moreover, $G'$ has $n+2$ vertices and at most $m+n+4$ edges.  Since $k \geq 3$ and $m \leq n(k-2)$, we can easily obtain: $m+n+4 \leq (n+2)(k-1)$.
\end{proof}

The above proposition shows that any strongly chromatic-choosable graph constructed using Proposition~\ref{pro: obvious} Statement (iii) or Lemma~\ref{lem: construct} from a graph satisfying the edge condition satisfies the edge condition. Thus, Lemma~\ref{lem: notunique} applies to such graphs. Also note Proposition~\ref{pro: joincomplete} below shows that even a strongly chromatic-choosable graph that does not satisfy the edge condition can be made to satisfy the edge condition by taking a join with a large enough complete graph.

We now show that there are examples of strongly chromatic-choosable graphs that do not satisfy the edge condition.  In order to do this, we will show that Lemma~\ref{lem: construct} can be extended in the case where our starting graph is an odd cycle.

\begin{lem} \label{lem: constructodd} Let $G$ be an odd cycle $C_{2l+1}$. Suppose we can find sets $A, B \subseteq V(G)$ such that $A \cup B = V(G)$. Let $C=A \cap B$ and suppose  $ 0 < |C| \leq 8$, and $|A|, \;|B| > |C|$.  Form $G'$ by adding vertices $u$ and $s$ to $G$, and add edges so that $u$ is adjacent to every vertex in $A$ and $s$ is adjacent to every vertex in $B$.  If $\chi(G') > 3$, then $G'$ is strong $4$-chromatic-choosable.
\end{lem}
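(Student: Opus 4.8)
The plan is to run the case analysis from the proof of Lemma~\ref{lem: construct}, with $C_{2l+1}$ playing the role of the strong $k$-chromatic-choosable base graph for $k=3$ (recall $C_{2l+1}$ is strong $3$-chromatic-choosable by Proposition~\ref{pro: obvious}(vi)), and to strengthen only the single counting step in which the bound on $|C|$ is used. Let $L$ be an arbitrary non-constant $3$-assignment for $G'$; it suffices to produce a proper $L$-coloring. Split into case (1) $L(u)\cap L(s)\neq\emptyset$ and case (2) $L(u)\cap L(s)=\emptyset$, and in case (2) set $I=\bigcap_{v\in C}L(v)$ and split further into (2a) $|I|\ge 2$ and (2b) $|I|\le 1$. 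Cases (1) and (2a) are handled exactly as in Lemma~\ref{lem: construct} with $k=3$: one colors $u$ and $s$, deletes the chosen colors from the lists of the vertices of $C_{2l+1}$ to obtain lists of size at least $2$, uses $|A|,|B|>|C|$ to guarantee that the resulting assignment is not a constant $2$-assignment, and then applies Proposition~\ref{pro: obvious}(ii); these arguments never use the size bound on $|C|$ beyond $|A|,|B|>|C|$. So the whole matter reduces to case (2b) (where, incidentally, $|C|\ge 2$, since $|C|=1$ would give $|I|=3$).

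In case (2b), write $L(u)=\{c_1,c_2,c_3\}$ and $L(s)=\{c_4,c_5,c_6\}$ and consider the $9$ \emph{color pairs} $(c_i,c_j)$ with $i\in\{1,2,3\}$ and $j\in\{4,5,6\}$. As in the proof of Lemma~\ref{lem: construct} with $k=3$, a $3$-element list contains at most $\lceil 3/2\rceil\lfloor 3/2\rfloor=2$ of these pairs, so the $|C|\le 8$ lists of the vertices of $C$ together contain at most $16$ color pairs counted with multiplicity. If some color pair lies in no such list, then I finish exactly as in Lemma~\ref{lem: construct}: color $u$ and $s$ with the two colors of that pair, delete them from the lists on $C_{2l+1}$ to obtain lists of size at least $2$, note that $|I|\le 1$ forces this assignment to be non-constant, and apply Proposition~\ref{pro: obvious}(ii). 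Otherwise every color pair lies in at least one list of a vertex of $C$; writing $a_j$ for the number of such lists containing the $j$th pair, we have $a_j\ge 1$ for all $j$ and $\sum_{j}(a_j-1)\le 16-9=7$, so at least two of the nine $a_j$ equal $1$. Fix a color pair $(c_a,c_b)$ lying in exactly one list $L(v^{\ast})$, $v^{\ast}\in C$.

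Now color $u$ with $c_a$ and $s$ with $c_b$. Every vertex of $C_{2l+1}$ other than $v^{\ast}$ keeps a list of size at least $2$, while $v^{\ast}$ is forced onto $c^{\ast}:=L(v^{\ast})\setminus\{c_a,c_b\}$; coloring $v^{\ast}$ with $c^{\ast}$ leaves exactly the task of list-coloring the path $C_{2l+1}-v^{\ast}$ on $2l$ vertices, whose interior vertices have lists of size at least $2$ and whose two end vertices (the neighbors of $v^{\ast}$) have lists of size at least $1$ after $c^{\ast}$ is removed. A greedy sweep along this path succeeds unless both ends carry singleton lists and the lists in between are forced into a chain that produces a conflict at the far end; showing that this degeneracy cannot happen is the crux. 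I plan to rule it out using the slack still available in case (2b): at least two color pairs lie in exactly one list of a vertex of $C$, and for a given such pair one may additionally choose which of its two colors to force onto the exceptional vertex and from which end to run the greedy sweep; the plan is to show that not all of these options can lead to the conflicting chain. I expect this endpoint analysis to be the main obstacle, and I would isolate it as a short claim about list-coloring a path whose only lists of size less than $2$ are at its two ends, showing that the resulting obstruction is too rigid to coexist with two admissible color pairs.
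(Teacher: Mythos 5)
Your reduction to the case $L(u)\cap L(s)=\emptyset$, $|I|\le 1$ is the same as the paper's, and your counting is correct (indeed slightly sharper: with $a_j\ge 1$ for all nine pairs and $\sum_j a_j\le 2|C|\le 16$, at least two $a_j$ equal $1$). But the proof is not complete: you yourself identify the crux --- after coloring $u$ with $c_a$ and $s$ with $c_b$, the vertex $v^{\ast}$ is forced, and the remaining even path can genuinely fail to be greedily colorable when both of its ends are left with singleton lists (e.g.\ lists $\{a\},\{a,b\},\{a,b\},\{a\}$ on $P_4$) --- and then you only \emph{plan} to rule this out. The plan as sketched is not obviously workable: the two pairs with $a_j=1$ may lie in the same list $L(v^{\ast})$ (a single $3$-list contains two color pairs), so ``try the other pair'' need not move the forced vertex, and choosing the direction of the sweep does not help when both endpoints are singletons. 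So the main obstacle is named but not overcome.

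The paper closes exactly this gap with two ingredients you do not have. First, Proposition~\ref{pro: fodd}: an odd cycle is $f$-choosable when exactly one vertex has $f=1$, exactly one has $f=3$, and the rest have $f=2$ (greedy along an ordering that starts at the $1$-vertex and ends at the $3$-vertex, which then has two colored neighbors but three colors). Second, a further case split on the chosen pair $(c_a,c_b)$: if some $v_j\in C$ contains neither $c_a$ nor $c_b$, then $|L'(v_j)|=3$ and Proposition~\ref{pro: fodd} applies directly, bypassing your path degeneracy; if instead every list of a vertex of $C$ meets $\{c_a,c_b\}$, a second round of counting over the four complementary pairs $P'=\{(c_i,c_j): i\in\{2,3\}, j\in\{5,6\}\}$ (using that the list containing $(c_1,c_4)$ contains no pair of $P'$ and each remaining list at most one, so $|C|\le 8$ gives at most $7$ slots for $4$ pairs) produces either a pair in no list, or a pair in exactly one list with some vertex missing both its colors (back to Proposition~\ref{pro: fodd}), or finally forces every list to meet both $\{c_1,c_4\}$ and $\{c_2,c_5\}$, whence no list contains the pair $(c_3,c_6)$ and the standard argument finishes. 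You should either prove your isolated path claim (which I expect to be at least as hard as, and essentially equivalent to, establishing something like Proposition~\ref{pro: fodd} together with the secondary counting) or adopt this structure.
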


\begin{proof}  The proof is postponed to the Appendix to save space.
\end{proof}

One will note that it is easy to construct examples where $u$ and $s$ have at least 10 neighbors in common and the resulting graph is not strong 4-chromatic-choosable.  This leads to an interesting question: In Lemma~\ref{lem: constructodd} can the condition $|C| \leq 8$ be replaced with $|C| \leq 9$?  We suspect that the answer is yes.  More importantly, using the notation of Proposition~\ref{pro: addtoodd}, Lemma~\ref{lem: constructodd} shows that $G_{l,5,1}$, $G_{l,6,1}$, $G_{l,7,1}$, and $G_{l,8,1}$ are all strong $4$-chromatic-choosable.  We note that $G_{l,5,1}$, $G_{l,6,1}$, $G_{l,7,1}$, and $G_{l,8,1}$ all do not satisfy the edge condition since $G_{l,m,1}$ has $4l+m+2$ edges and $2l+3$ vertices.  Thus, the are infinitely many strong 4-chromatic-choosable graphs that do not satisfy the edge condition.  Also, $G_{l,8,1} \vee K_1$ is a strong 5-chromatic-choosable graph not satisfying the edge condition.

\par

There also exist strongly critical graphs that do not satisfy the edge condition.  Specifically, a tedious argument shows that $C_5 \vee C_{2n+1}$ is strong 6-critical for $n=1,2,3,4,5$~\cite{KM216}.  It is easy to see that $C_5 \vee C_9$ and $C_5 \vee C_{11}$ do not satisfy the edge condition.  Then, $ (C_5 \vee C_{11}) \vee K_1$ is strong 7-critical and does not satisfy the edge condition.  Note this also implies that there are strong $k$-chromatic-choosable graphs violating the edge condition for all $k$ satisfying $4 \leq k \leq 7$.

\par

We suspect that there are infinitely many strong $k$-chromatic-choosable graphs that do not satisfy the edge condition for $k \geq 8$.  However, whenever we have a strongly chromatic-choosable graph, $G$, that does not satisfy the edge condition, we can obtain a strongly chromatic-choosable graph from $G$ that satisfies the edge condition by taking the join of $G$ with a sufficiently large complete graph.  The following proposition makes this idea precise.

\begin{pro} \label{pro: joincomplete}  Suppose that $G$ is a strong $k$-chromatic-choosable graph with $n$ vertices and $m$ edges that does not satisfy the edge condition.  Let $d= m - n(k-2) > 0$.  Then, if $p$ is such that $d \leq p(2k+p-3)/2$, $G \vee K_p$ is a strong $(k+p)$-chromatic-choosable graph that satisfies the edge condition.
\end{pro}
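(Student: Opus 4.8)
The plan is to obtain the strong $(k+p)$-chromatic-choosable property for free from Proposition~\ref{pro: obvious} Statement~(iii), and then to verify the edge condition by a direct count of vertices and edges followed by a one-line algebraic simplification. First I would record the parameters of $H := G \vee K_p$: it has $n+p$ vertices, and its edges are the $m$ edges of $G$, the $\frac{p(p-1)}{2}$ edges of the joined $K_p$, and the $np$ edges connecting the two factors, so $|E(H)| = m + np + \frac{p(p-1)}{2}$. By Proposition~\ref{pro: obvious}~(iii), $H$ is strong $(k+p)$-chromatic-choosable, so it remains only to check $|E(H)| \le |V(H)|\bigl((k+p)-2\bigr)$.

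Next I would set up the target inequality
\[
m + np + \frac{p(p-1)}{2} \;\le\; (n+p)(k+p-2),
\]
expand the right-hand side as $n(k-2) + np + p(k+p-2)$, and cancel the common term $np$. This reduces the edge condition to
\[
m - n(k-2) \;\le\; p(k+p-2) - \frac{p(p-1)}{2}.
\]
The left-hand side is exactly $d$ by definition. A short simplification rewrites the right-hand side as $\tfrac{p}{2}\bigl(2(k+p-2) - (p-1)\bigr) = \tfrac{p(2k+p-3)}{2}$, so the inequality we must establish is precisely $d \le p(2k+p-3)/2$, which is the hypothesis of the proposition. Hence $H$ satisfies the edge condition and the proof is complete.

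There is essentially no hard step: the entire content is the edge count for a join together with the algebraic identity $p(k+p-2) - \frac{p(p-1)}{2} = \frac{p(2k+p-3)}{2}$. The only remark worth including is the qualitative reading of the bound — for fixed $G$ the quantity $d$ is a constant while $p(2k+p-3)/2$ grows quadratically in $p$, so a suitable $p$ always exists; this is the point of the proposition, and it follows immediately once the computation above is in hand.
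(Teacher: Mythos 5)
Your proof is correct and follows essentially the same route as the paper: invoke Proposition~\ref{pro: obvious}~(iii) for strong $(k+p)$-chromatic-choosability, count $n+p$ vertices and $m+np+\frac{p(p-1)}{2}$ edges in the join, and reduce the edge condition to the hypothesis $d \le p(2k+p-3)/2$ by the algebraic identity $p(k+p-2)-\frac{p(p-1)}{2}=\frac{p(2k+p-3)}{2}$. The paper states this computation as ``easy to show''; you have simply written it out in full, so there is nothing to correct.
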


\begin{proof}
By Proposition~\ref{pro: obvious} Statement (iii), we know that $G \vee K_p$ is a strong $k+p$-chromatic-choosable.  Also, $G \vee K_p$ has $m + \frac{p(p-1)}{2} + np$ edges and $n+p$ vertices.  It is easy to show that $d \leq p(2k+p-3)/2$ implies $m + \frac{p(p-1)}{2} + np \leq (n+p)(k+p-2)$ which shows that $G \vee K_p$ satisfies the edge condition.
\end{proof}

So, $G_{l,m,1} \vee K_p$ is strong $(4+p)$-chromatic-choosable and satisfies the edge condition for $m \leq 7$ and $p \geq 1$.  Moreover, $G_{l,8,1} \vee K_p$ is strong $(4+p)$-chromatic-choosable and satisfies the edge condition for $p \geq 2$.

\section{List Coloring Cartesian Product of Graphs}\label{mainresult}
In this section we will prove the following theorem.

\begin{thm} \label{thm: mainresult}
Let $M$ be a strong $k$-chromatic-choosable graph that satisfies the edge condition, and $H$ be a graph that contains a Hamilton path, $w_1, w_2, \ldots, w_m$, such that $w_i$ has at most $\rho \ge 1$ neighbors among $w_1, \ldots, w_{i-1}$.  Let $G= M \square H$.  Then, $\chi_{\ell}(G) \leq k+ \rho -1$.
\end{thm}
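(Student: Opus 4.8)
The plan is to construct a proper $L$-coloring of $G=M\square H$ for an arbitrary $(k+\rho-1)$-assignment $L$, building it up copy by copy. Write $t=|V(H)|$ and let $M_1,\dots,M_t$ be the copies of $M$ in $G$, with $M_i$ the copy lying over the path vertex $w_i$; the edges of $G$ between $M_i$ and $M_j$ form a perfect matching exactly when $w_iw_j\in E(H)$. I would process the copies in the path order $M_1,\dots,M_t$. Having colored $M_1,\dots,M_{i-1}$, the colors on the at most $\rho$ already-colored copies matched to $M_i$ delete at most $\rho$ colors from each list $L((v,w_i))$, so $M_i\cong M$ inherits a list assignment $L_i$ with $|L_i(v)|\ge(k+\rho-1)-\rho=k-1$. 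If $L_i$ is \emph{not} a constant $(k-1)$-assignment, Proposition~\ref{pro: obvious}(ii) colors $M_i$ from $L_i$ and we proceed; the first step is of this type since $L_1$ has all lists of size $k+\rho-1>k-1$. The entire difficulty lies in the case that $L_i$ \emph{is} a constant $(k-1)$-assignment, where $M_i$ is genuinely not colorable and a correction is needed.

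To correct, I would re-color the single copy $M_{i-1}$. This is the right choice because $w_{i-1}w_i\in E(H)$, so $w_{i-1}$ is an already-colored neighbor of $w_i$ and in fact has the largest index among them; consequently every later neighbor of $w_{i-1}$ other than $w_i$ has index $>i$ and is not yet colored, while the earlier neighbors of $w_{i-1}$ are automatically respected by any proper $L_{i-1}$-coloring. Hence re-coloring $M_{i-1}$ disturbs no completed copy. Moreover $L_{i-1}$ was non-constant with lists of size $\ge k-1$ when $M_{i-1}$ was colored, and it depends only on copies of index $<i-1$, none of which is re-colored at a later step, so it has not changed since; thus Lemma~\ref{lem: notunique} — this is where the edge condition on $M$ enters — provides at least two proper $L_{i-1}$-colorings of $M_{i-1}$.

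It remains to see that re-coloring $M_{i-1}$ can actually make $L_i$ non-constant: I claim at most one proper $L_{i-1}$-coloring of $M_{i-1}$ leaves $L_i$ a constant $(k-1)$-assignment. Indeed, $|L_i(v)|=k-1$ forces the deleted colors at $v$ to be exactly $\rho$ distinct elements of $L((v,w_i))$; writing $T_v$ for the $\rho-1$ of these coming from the copies other than $M_{i-1}$, a coloring $c$ of $M_{i-1}$ keeps $L_i$ constant equal to $W$ iff $L((v,w_i))=W\sqcup T_v\sqcup\{c(v)\}$ for every $v$. If $c,c'$ both do this with constants $W,W'$, then $W\cup\{c(v)\}=W'\cup\{c'(v)\}$ for every $v$; if $W\ne W'$ this forces $W\triangle W'=\{c(v),c'(v)\}$ for all $v$, so $W\triangle W'$ is a fixed $2$-element set $\{a,b\}$ and both $c,c'$ take all their values in $\{a,b\}$ — impossible since $M$ has an edge and $\chi(M)=k\ge3$ (the edge condition forces $k\ge3$). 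Thus $W=W'$ and then $c=c'$. Since $M_{i-1}$ has at least two proper $L_{i-1}$-colorings, one of them makes $L_i$ non-constant; choosing $c_{i-1}$ to be such a coloring, Proposition~\ref{pro: obvious}(ii) then colors $M_i$ and the step is complete. Carrying this out for $i=1,\dots,t$ produces a proper $L$-coloring of $G$, giving $\chi_\ell(G)\le k+\rho-1$. I expect the main obstacle to be precisely this last claim: isolating why re-coloring one well-chosen copy always suffices, and why the existence of two (and not more) colorings — exactly what the edge condition guarantees via Lemma~\ref{lem: notunique} — is enough, the crux being that $M$ is non-bipartite.
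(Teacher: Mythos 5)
Your proposal is correct and takes essentially the same approach as the paper's proof (via Lemma~\ref{lem: main}): the paper runs an induction that carries forward \emph{two} proper colorings of the prefix differing only on the last copy, whereas you carry a single coloring and regenerate the second on demand by re-coloring $M_{i-1}$, but the combinatorial core is identical. In both arguments the edge condition enters only through Lemma~\ref{lem: notunique} to supply two proper $L_{i-1}$-colorings of the previous copy, and the decisive set-theoretic observation --- that two distinct proper colorings of $M_{i-1}$ cannot both induce a constant $(k-1)$-assignment on $M_i$, since $W\sqcup\{c(v)\}=W'\sqcup\{c'(v)\}$ would force $c$ to use at most two colors on all of $M$ --- is exactly the contradiction derived in the paper's inductive step.
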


We present a lemma that will immediately yield this theorem.  Our proof of the lemma will be by induction, and we will load the induction hypothesis so that we prove something stronger.  Before stating the lemma, we introduce some terminology.  Suppose that $G$ is a graph and $U \subseteq V(G)$, the \emph{subgraph of $G$ induced by $U$} is the graph with $U$ as its vertex set and all the edges in $E(G)$ with both endpoints in $U$, and we write $G[U]$.  Suppose that $H$ is a graph that contains a Hamilton path, $w_1, w_2, \ldots, w_m$, and  $G= M \square H$.  For $j=1,2, \ldots, m$ let $V_j$ be the set of vertices in $V(G)$ with second coordinate $w_j$.  We refer to $G[V_j]$ as the \emph{$j^{th}$ copy of $M$ in $G$}.

\begin{lem} \label{lem: main}
Let $M$ be a strong $k$-chromatic-choosable graph that satisfies the edge condition, and  $H$ be a graph that contains a Hamilton path, $w_1, w_2, \ldots, w_m$, such that  $w_i$ has at most $\rho \ge 1$ neighbors among $w_1, \ldots, w_{i-1}$. Let $G= M \square H$.  Suppose that $L$ is an arbitrary $(k+\rho-1)$-assignment for $G$.  There exist two proper $L$-colorings for $G$, $c_1$ and $c_2$, with the property that there exists a vertex, $v$, in the $m^{th}$ copy of $M$ in $G$ such that $c_1(v) \neq c_2(v)$, and for any vertex $u$ not in the $m^{th}$ copy of $M$ in $G$ we have that $c_1(u)=c_2(u)$.
\end{lem}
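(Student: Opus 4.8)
The plan is to induct on $m$, the number of vertices in the Hamilton path of $H$, keeping the strengthened statement in Lemma~\ref{lem: main} as the induction hypothesis. The base case $m=1$ is exactly Lemma~\ref{lem: notunique}: here $G$ is a single copy of $M$, the list assignment $L$ is a $(k+\rho-1)$-assignment with $k+\rho-1 \geq k-1$, and either $L$ restricted to that copy is non-constant (so Lemma~\ref{lem: notunique} gives two proper $L$-colorings differing somewhere, and we may take them to agree nowhere else since there is nowhere else), or $L$ is constant, in which case we still have $k+\rho-1 \geq k$ colors available and we can recolor one vertex to produce a second coloring (using $\chi_\ell(M)=k$ from Proposition~\ref{pro: obvious}(i) applied after deleting a color, or just directly). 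I would isolate this base-case bookkeeping carefully since the "constant list" sub-case is the annoying one.

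For the inductive step, assume the result for path-prefixes of length $m-1$. Given $G = M \square H$ and a $(k+\rho-1)$-assignment $L$, let $G'$ be the subgraph induced by the first $m-1$ copies of $M$ (i.e., $M \square H[\{w_1,\dots,w_{m-1}\}]$, which still has the required Hamilton-path ordering with the same $\rho$ bound). Apply the induction hypothesis to $G'$ to get proper $L$-colorings $c_1', c_2'$ of $G'$ agreeing everywhere except at one vertex $v^\star$ in the $(m-1)^{st}$ copy. Now I must extend both colorings across the $m^{th}$ copy of $M$. The vertex $w_m$ has at most $\rho$ neighbors among $w_1,\dots,w_{m-1}$; in $M \square H$ this means each vertex $x$ of the $m^{th}$ copy of $M$ has at most $\rho$ neighbors lying outside the $m^{th}$ copy (one in each of at most $\rho$ earlier copies), and those neighbors already received colors under $c_i'$. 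So for the $m^{th}$ copy, define the reduced list assignment $L_i(x) = L(x) \setminus \{\text{colors of the $\leq \rho$ external neighbors of $x$ under } c_i'\}$; then $|L_i(x)| \geq (k+\rho-1) - \rho = k-1$ for every $x$ in the $m^{th}$ copy. The key structural point: the external neighbor of $x$ in a fixed earlier copy $w_j$ is the vertex $(x\text{'s $M$-coordinate}, w_j)$, and as $x$ ranges over the $m^{th}$ copy, these neighbors range over an isomorphic copy of $M$; so the "colors removed" pattern is governed by how $c_i'$ colors those earlier copies — and crucially, only the $(m-1)^{st}$-copy coloring differs between $c_1'$ and $c_2'$, and only at $v^\star$.

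Now I want to produce $c_1, c_2$ on the $m^{th}$ copy. Two cases. Case (a): $w_m$ is adjacent to $w_{m-1}$. Then the vertex $x_0$ of the $m^{th}$ copy whose $M$-coordinate matches $v^\star$ has $L_1(x_0) \neq L_2(x_0)$ (different external color removed), so at least one of $L_1, L_2$ is non-constant on the $m^{th}$ copy; WLOG $L_1$. Apply Lemma~\ref{lem: notunique} to the $m^{th}$ copy of $M$ with $L_1$: get two proper $L_1$-colorings $d_1, d_1'$ of that copy differing at some vertex $v$. Set $c_1 = c_1' \cup d_1$ and $c_2 = c_1' \cup d_1'$: both are proper $L$-colorings of $G$ (external edges are fine because $L_1$ already excluded those colors), they agree on $G'$ (both extend $c_1'$), and differ at $v$ in the $m^{th}$ copy — exactly what is claimed. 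Case (b): $w_m$ is not adjacent to $w_{m-1}$. Then $c_1'$ and $c_2'$ remove the same colors when forming the reduced lists on the $m^{th}$ copy, so $L_1 = L_2 =: L^\sharp$. If $L^\sharp$ is non-constant on the $m^{th}$ copy, proceed as in Case (a) with $L^\sharp$. If $L^\sharp$ is constant, then $|L^\sharp(x)| \geq k-1$ and in fact we can use one more observation: since $L^\sharp$ arises by deleting at most $\rho$ colors from $(k+\rho-1)$-lists and is constant, and we still need two colorings — here I'd invoke that with a constant $(k-1)$-list one can still $L^\sharp$-color $M$ ($\chi_\ell(M)=k$ so a $(k-1)$-list may fail, but a *constant* one fails only if… ) — more carefully, if $|L^\sharp(x)| \geq k$ for all $x$ we are done by $\chi_\ell(M) = k$ plus a recoloring argument, and if $|L^\sharp(x)| = k-1$ constant we instead do not delete so aggressively: pick the extension of $c_2'$ (which differs from $c_1'$ at $v^\star$) — wait, in Case (b) that vertex is not adjacent to the $m^{th}$ copy. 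So I resolve this sub-case by extending $c_1'$ to the $m^{th}$ copy in two different ways using the $k$ available colors when $k+\rho-1 \geq k$, i.e., when $\rho \geq 1$, which holds; concretely delete colors for only the $\leq \rho - 1$ external neighbors other than using one coordinate as a "free" vertex, or simply note $k+\rho-1-\rho = k-1$ but we may choose a vertex $x$ in the $m^{th}$ copy and not delete at one of its external neighbors, making $L_i(x)$ have $k$ colors there and hence non-constant. I expect this constant sub-case of Case (b) to be the main obstacle, and I would handle it by a careful local argument ensuring the reduced list assignment on the $m^{th}$ copy can always be taken non-constant (using $\rho \geq 1$ to keep one color in reserve at a well-chosen vertex) so that Lemma~\ref{lem: notunique} applies uniformly.
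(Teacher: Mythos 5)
Your overall architecture is the same as the paper's: induct on $m$ with the loaded two-coloring hypothesis, reduce the lists on the last copy of $M$ using each of $c_1'$ and $c_2'$, show that at least one of the reduced assignments is not a constant $(k-1)$-assignment, and invoke Lemma~\ref{lem: notunique}. However, the step that carries all the weight is exactly the one you dispatch in a single parenthetical, and as stated it is false. You claim $L_1(x_0)\neq L_2(x_0)$ because ``a different external color is removed,'' but removing different colors from the same list need not produce different sets: writing $W=L(x_0)\setminus B$, where $B$ is the (common) set of colors used on the external neighbors of $x_0$ lying in copies $1,\dots,m-2$, if neither $c_1'(v^\star)$ nor $c_2'(v^\star)$ belongs to $W$ then $L_1(x_0)=L_2(x_0)=W$. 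Nothing forces the list of $v^\star$ to meet the list of $x_0$, so this genuinely can happen. The statement you actually need --- that $L_1$ and $L_2$ are not \emph{both} constant $(k-1)$-assignments on the last copy --- is still true, but it needs the counting you skipped: if both were constant $(k-1)$-assignments, they agree at every vertex other than $x_0$ (and $n\geq 2$), so their constant values coincide and $L_1(x_0)=L_2(x_0)$ with $|L_1(x_0)|=k-1$; the latter forces exactly $\rho$ colors of $L(x_0)$ to be deleted in each case, i.e.\ $|B\cap L(x_0)|=\rho-1$ and both $c_1'(v^\star),c_2'(v^\star)\in W$ with $|W|=k$, whence $L_1(x_0)=W\setminus\{c_1'(v^\star)\}\neq W\setminus\{c_2'(v^\star)\}=L_2(x_0)$, a contradiction. (The paper reaches the same conclusion by a longer route, deducing that $c_1$ would have to be monochromatic on the $(m-1)^{\text{st}}$ copy.) Without some such argument your Case (a) does not close.

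Your Case (b) is vacuous and should be deleted: $w_1,\dots,w_m$ is a Hamilton path, so $w_{m-1}$ and $w_m$ are always adjacent in $H$. The fact that you could not resolve the ``constant reduced list'' sub-case there is therefore harmless, but the patches you float for it (e.g.\ ``not delete at one of its external neighbors'') would destroy properness of the extension and are not viable. Noticing that consecutive vertices of the path are adjacent is also what guarantees that each vertex of the last copy has at most $\rho-1$ neighbors in copies $1,\dots,m-2$, which is implicitly used when you bound $|L_i(x)|\geq k-1$.
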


\begin{proof}
We will prove this by induction on $m$, the number of vertices in $H$.  We also suppose that $V(M)=\{v_1, \ldots, v_n \}$.  We know by Lemma~\ref{lem: notunique} that the claim holds for any $\rho \geq 1$ when $m=1$.

\par

The general idea for the induction is as follows.  As we color the copies of $M$ inductively, we will always have fixed colors for the first $m-2$ copies of $M$ in $G$.  We then use the inductive hypothesis to possibly modify how we will color the $(m-1)^{st}$ copy of $M$ in $G$ to make our coloring for the final copy work out.  We now present the details for the induction step.

\par

Now, suppose that $m \geq 2$ and the desired result holds for all natural numbers less than $m$.  By the induction hypothesis we know that there are two proper $L$-colorings of $G'=M \square (H-w_m)$ (when $L$ is restricted to $G'$), $c_1$ and $c_2$, with the property that there exists a vertex, $v$, in the $(m-1)^{st}$ copy of $M$ in $G'$ such that $c_1(v) \neq c_2(v)$, and for any vertex $u$ not in the $(m-1)^{st}$ copy of $M$ in $G'$ we have that $c_1(u)=c_2(u)$.  The strategy of the proof is to extend $c_1$ or $c_2$ into two proper $L$-colorings for $G$ that have the desired properties.

\par

Now for $i=1, \ldots, n$, let:
$$A_i = \{v \in V(G) | \text{$v=(v_i,w_j)$ with $j < m-1$ and $v$ is adjacent to $(v_i,w_m)$ in $G$} \}.$$
\noindent Intuitively, $A_i$ consists of the neighbors of $(v_i,w_m)$ that are in the first $(m-2)$ copies of $M$ in $G$.  We immediately note that $|A_i| \leq (\rho-1)$ for each $i$.  For $i=1, \ldots, n$ and $j=1,2$ let:
$$B_{i,j} = \{c_j(v) | v \in A_i \}.$$
\noindent We see that $|B_{i,j}| \leq (\rho-1)$ for each $i$ and $j$.  Also by the induction hypothesis we know that for each $i$, $B_{i,1}=B_{i,2}$.  So, we let $B_i = B_{i,1}=B_{i,2}$.  Now, for each $i$ let:
$$L'(v_i,w_m) = L(v_i,w_m) - (B_i \cup \{c_1(v_i,w_{m-1}) \}) \; \text{and} \; L''(v_i,w_m) = L(v_i,w_m) - (B_i \cup \{c_2(v_i,w_{m-1}) \}).$$
\noindent We immediately note that $|L'(v_i,w_m)| \geq k-1$ and $|L''(v_i,w_m)| \geq k-1$ for each $i$.  Now, we claim that $L'$ or $L''$ is not a constant $(k-1)$-assignment for the $m^{th}$ copy of $M$ in $G$.

\par

Suppose for the sake of contradiction that both $L'$ and $L''$ are constant $(k-1)$-assignments for the $m^{th}$ copy of $M$ in $G$.  Since both $L'$ and $L''$ are constant $(k-1)$-assignments, we know that for each $i$ we must have deleted exactly $\rho$ colors when we formed $L'(v_i,w_m)$ and $L''(v_i,w_m)$.  We note that $|B_i|$ is at most $\rho-1$ for each $i$. So, in order for $\rho$ colors to be deleted  when $L'(v_i,w_m)$ and $L''(v_i,w_m)$ are formed, for each $i$ we must have:
$$|B_i|=\rho-1 \; \text{and} \; |B_i \cup \{c_2(v_i,w_{m-1}) \}|=|B_i \cup \{c_1(v_i,w_{m-1}) \}|=\rho.$$
\noindent Moreover, we know that for each $i$, $B_i$ must be a subset of $L(v_i,w_m)$ which implies
$$|L(v_i,w_m)-B_i|=k.$$
\noindent Furthermore, since there is a vertex $v$ in the $(m-1)^{st}$ copy of $M$ in $G$ such that $c_1(v) \neq c_2(v)$, we know that there is an $i$ such that $L''(v_i,w_m) \neq L'(v_i,w_m)$.  Since $L'$ and $L''$ are constant $(k-1)$-assignments, this implies that $L''(v_i,w_m) \neq L'(v_i,w_m)$ for all $i$.  Since the union of two distinct $(k-1)$ element subsets of a $k$ element set must equal the $k$ element set,  we have that for each $i$, $L(v_i,w_m)-B_i=L''(v_i,w_m) \cup L'(v_i,w_m)$.  Thus,
$$  L(v_1,w_m)-B_1=L(v_2,w_m)-B_2= \cdots = L(v_{n},w_m)-B_n.$$
\noindent This means that in order for $L'$ to be a constant $(k-1)$-assignment, $c_1$ assigned the same color to every vertex in the $(m-1)^{st}$ copy of $M$ in $G$. This contradicts the fact that $c_1$ is a proper $L$-coloring of $G'$ since $\chi(M)=k >1$.  Thus, $L'$ or $L''$ is not a constant $(k-1)$-assignment for the $m^{th}$ copy of $M$ in $G$.

\par

Without loss of generality, suppose that $L'$ is not a constant $(k-1)$-assignment.  We know by Lemma~\ref{lem: notunique} that we can find at least 2 distinct proper $L'$-colorings of the $m^{th}$ copy of $M$ in $G$.  So, using these distinct $L'$-colorings, we can extend $c_1$ into 2 distinct proper $L$-colorings of $G$.  We know that these two distinct $L$-colorings will satisfy the needed conditions since the vertices not in the $m^{th}$ copy of $M$ in $G$ will be colored as they are by $c_1$ in both colorings.  This completes the induction step, and we are done.
\end{proof}

There are a couple of remarks worth making regarding Theorem~\ref{thm: mainresult}.  It is an improvement on the bound from  Theorem~\ref{thm: Borow} if and only if $ \rho < \col(H)$ and $k+ \rho < \col(M) + \chi_{\ell}(H)$.  We know that $k= \chi_{\ell}(M) \leq \col(M)$. So, when $\rho < \chi_{\ell}(H)$ Theorem~\ref{thm: mainresult} is certainly an improvement on Theorem~\ref{thm: Borow}.  It is also easy to see that $\rho \geq \col(H)-1$. This means that Theorem~\ref{thm: mainresult} is an improvement on Theorem~\ref{thm: Borow} if and only if $\rho = \col(H)-1$ and $\col(H) - \chi_{\ell}(H) \leq \col(M)- \chi_{\ell}(M)$ (\emph{Note:} Since we know $\chi_{\ell}(M) \leq \col(M)$ for any $M$, we can drop the second condition when $\col(H)=\chi_{\ell}(H)$).
\par
There are many examples of graphs, $M$ and $H$, that satisfy these conditions. An easy way to produce such examples is to force $H$ to be such that $\col(H)=\chi_{\ell}(H)$ and $\rho= \col(H)-1$ (examples of graphs where both of these conditions are satisfied include paths, cycles, complete graphs, and powers of paths). Then, we are free to let $M$ be any strong $k$-chromatic-choosable graph that satisfies the edge condition.  The next corollary illustrates this idea.  First, we mention a definition.  We let the \emph{kth power} of graph $G$, denoted $G^k$, be the graph with vertex set $V(G)$ where two vertices are adjacent if their distance in $G$ is at most $k$.  It is easy to see that $P_n^r$ where $1 \leq r \leq n-1$ satisfies $\chi(P_n^r) = \chi_{\ell}(P_n^r) = \col(P_n^r) = r+1$.

\begin{cor} \label{cor: withpath}
Suppose that $M$ is a strong $k$-chromatic-choosable graph that satisfies the edge condition.  Then, $M \square P_n$ is chromatic-choosable. Moreover, if $n \geq 2$, $r \leq n-1$ then $\max\{k, r+1 \} \leq \chi_{\ell}(M \square P_n^r) \leq k+r-1$
\end{cor}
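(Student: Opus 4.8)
The plan is to deduce both assertions directly from Theorem~\ref{thm: mainresult}, together with the identity $\chi(M \square H) = \max\{\chi(M),\chi(H)\}$ recorded in Section~\ref{prelim} and the trivial bound $\chi \le \chi_\ell$.

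For the first statement, note that $P_n$ is its own Hamilton path $w_1, w_2, \ldots, w_n$, and in this ordering each $w_i$ (for $i \ge 2$) has exactly one neighbor, namely $w_{i-1}$, among $w_1, \ldots, w_{i-1}$; so we may take $\rho = 1$. Since $M$ is strong $k$-chromatic-choosable and satisfies the edge condition, Theorem~\ref{thm: mainresult} yields $\chi_\ell(M \square P_n) \le k + \rho - 1 = k$. On the other hand $\chi_\ell(M \square P_n) \ge \chi(M \square P_n) = \max\{\chi(M),\chi(P_n)\} \ge \chi(M) = k$, so $\chi_\ell(M \square P_n) = k = \chi(M \square P_n)$, i.e. $M \square P_n$ is chromatic-choosable.

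For the second statement, fix $n \ge 2$ and $1 \le r \le n-1$. Since $V(P_n^r) = V(P_n)$ and $w_i w_j \in E(P_n^r)$ exactly when $|i-j| \le r$, the sequence $w_1, w_2, \ldots, w_n$ is still a path in $P_n^r$, hence a Hamilton path of $P_n^r$; moreover the neighbors of $w_i$ among $w_1, \ldots, w_{i-1}$ are precisely $w_{i-1}, w_{i-2}, \ldots, w_{\max\{1,\,i-r\}}$, of which there are at most $r$. Applying Theorem~\ref{thm: mainresult} with $H = P_n^r$ and $\rho = r$ gives $\chi_\ell(M \square P_n^r) \le k + r - 1$. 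For the lower bound, $\chi_\ell(M \square P_n^r) \ge \chi(M \square P_n^r) = \max\{\chi(M), \chi(P_n^r)\} = \max\{k,\, r+1\}$, using $\chi(P_n^r) = r+1$. Combining the two bounds completes the proof.

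There is essentially no obstacle beyond these routine verifications, since all the real content sits in Theorem~\ref{thm: mainresult}. The only point needing a moment of care is confirming that the natural path order on $V(P_n)$ realizes the back-degree $\rho = r$ in $P_n^r$ (and not something larger such as $2r$), and that $\rho = r \ge 1$, so the hypothesis $\rho \ge 1$ of Theorem~\ref{thm: mainresult} is genuinely met; note also that $\rho = r = \col(P_n^r) - 1$, so (as remarked after Theorem~\ref{thm: mainresult}) this bound strictly improves Theorem~\ref{thm: Borow}.
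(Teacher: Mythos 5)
Your proof is correct and follows essentially the same route as the paper: both apply Theorem~\ref{thm: mainresult} with the underlying Hamilton path of $P_n$ (resp.\ $P_n^r$) and back-degree $\rho=1$ (resp.\ $\rho=r$). The only cosmetic difference is in the lower bound, where you use $\chi_\ell \ge \chi = \max\{\chi(M),\chi(P_n^r)\}$ while the paper invokes subgraph monotonicity of $\chi_\ell$ applied to the copies of $M$ and $P_n^r$; both give $\max\{k,r+1\}$.
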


\begin{proof}
Let $G_1= M \square P_n$ and $G_2=M \square P_n^r$. We note that $\chi(G_1)= \max \{ \chi(M), 2 \} =k$.  By Theorem~\ref{thm: mainresult} we have that $\chi_{\ell}(G_1) \leq k + 1 - 1 = k$.  Thus, $G_1$ is chromatic-choosable.  Similarly since $G_2$ contains a copy of both $M$ and $P_n^r$, we have that $\max\{k, r+1 \} = \max \{ \chi_{\ell}(M), \chi_{\ell}(P_n^r) \} \leq \chi_{\ell}(G_2)$. Notice that any copy of $P_n^r$ contains a Hamilton path: the underlying $P_n$ used to form $P_n^r$.  If we order the vertices based upon this Hamilton path, each vertex has at most $r$ neighbors preceding it in the ordering.  So, by Theorem~\ref{thm: mainresult} we have that $\chi_{\ell}(G_2) \leq k + r -1$.
\end{proof}

One will note that Corollary~\ref{cor: withpath} shows that the bound given by Theorem~\ref{thm: mainresult} is sharp for the graph $M \square P_n$. Suppose that $G= M \square H$ is a graph satisfying the hypotheses of Theorem~\ref{thm: mainresult}.  It is easy to show that the bound from Theorem~\ref{thm: mainresult} is sharp when $\rho=1$.  However, when $\rho > 1$ it is more difficult to determine if the bound from Theorem~\ref{thm: mainresult} is best possible.  This is because in general the obvious lower bound on $\chi_{\ell}(G)$ is $\max \{k, \chi_{\ell}(H) \}$, and the largest we can ever expect this lower bound to be is: $\max \{k, \rho+1 \}$.  We see this in Corollary~\ref{cor: withpath} with the graph $G_2$.  Specifically, since $k \geq 3$ the obvious lower bound on the list chromatic number does not tell us whether the upper bound we obtain from Theorem~\ref{thm: mainresult} is best possible.  In fact, there is a large gap between the lower and upper bound on $\chi_{\ell}(G_2)$ when both $k$ and $r$ are large.

\par

In the next section we will concentrate on developing ideas to extend the proof technique used for Lemma~\ref{lem: main} to allow for a more general second factor. This will then allow us to state a more general version of Theorem~\ref{thm: mainresult} and construct examples where our more general theorem produces sharp bounds for any $\rho \in \N$.

\section{The List Color Function and Moving Beyond Hamiltonicity}\label{listcolorfunction}

We begin by considering how we might generalize Lemma~\ref{lem: main}.  First, let us consider the case where we are taking the Cartesian product of an odd cycle (i.e. a strong 3-chromatic-choosable graph satisfying the edge condition) and a path.  Intuitively speaking, when our first factor is an odd cycle and our second factor is a path, we have a lot of freedom in the proof of Lemma~\ref{lem: main} when we color our fist copy of $M$.  In particular, we suspect that there are a lot more than just two ways to color our first copy of $M$, and we suspect that this extra freedom will allow our second factor to be more complicated.  On the other hand, when it comes to an odd cycle with arbitrarily many vertices, it is possible to find a non-constant 2-assignment, $L$, such that there are exactly 2 proper $L$-colorings for the odd cycle.  So, intuitively speaking, we may not have a lot of freedom when it comes to coloring the second copy of $M$ and onwards.

\subsection{The List Color Function} \label{sublistcolor}

In order to study the number of list colorings for a strongly chromatic-choosable graph, we need a concept that is a generalization of the chromatic polynomial. If $L$ is a list assignment for $G$, we use $P(G,L)$ to denote the number of proper $L$-colorings of $G$. The \emph{list color function} $P_\ell(G,k)$ is the minimum value of $P(G,L)$ where the minimum is taken over all possible $k$-assignments $L$ for $G$\footnote{We will allow negative integer inputs into $P_{\ell}(G,k)$, and just take $P_{\ell}(G,k)=0$ when $k < 0$.  This will make one of our results easier to state.}. The list color function is not well-understood and in general is hard to calculate. The main theme of the results from previous work, as in~\cite{KN16}, \cite{AS90}, \cite{T09}, \cite{WQ17}, is to show that $P_{\ell}(G,k)=P(G,k)$ for some special $G$ and all $k \in \N$, or for all $G$ with $k$ large enough. In this section our focus is the list color function of strongly chromatic-choosable graphs.  We begin with a general lower bound for the list color function of a strongly chromatic-choosable graph.

\begin{thm} \label{thm: stronglistcolor}
If $M$ is a strong $k$-chromatic-choosable graph, then
$$P_\ell(M,m) \geq m \max_{v \in V(M)} P_\ell(M-\{v\}, m-1) \geq m$$
whenever $m \geq k$.
\end{thm}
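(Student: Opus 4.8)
The plan is to establish the (stronger-looking) first inequality by a routine vertex-deletion counting argument, and then obtain the bound ``$\geq m$'' by invoking Proposition~\ref{pro: obvious}(iv). Note at the outset that $m \geq k \geq 2$, so every quantity in play is well defined: $L(v)$ is nonempty, $(m-1)$-assignments exist, and $M - \{v\}$ has at least one vertex since $|V(M)| \geq 2$.

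First I would fix an arbitrary vertex $v \in V(M)$ and an arbitrary $m$-assignment $L$ for $M$, and partition the proper $L$-colorings of $M$ according to the color they assign to $v$. For each $c \in L(v)$, define a list assignment $L_c$ on $M - \{v\}$ by $L_c(u) = L(u) \setminus \{c\}$ when $u$ is adjacent to $v$ in $M$, and $L_c(u) = L(u)$ otherwise. Restriction to $V(M) - \{v\}$ gives a bijection between the proper $L$-colorings of $M$ sending $v$ to $c$ and the proper $L_c$-colorings of $M - \{v\}$ (the inverse just re-colors $v$ with $c$, which is legal precisely because every neighbor $u$ of $v$ satisfies $L_c(u) = L(u) \setminus \{c\}$). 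Hence the number of proper $L$-colorings of $M$ with $v \mapsto c$ equals $P(M - \{v\}, L_c)$. Since $|L_c(u)| \geq m-1$ for every $u$, passing to sublists of size exactly $m-1$ shows $P(M - \{v\}, L_c) \geq P_\ell(M - \{v\}, m-1)$. Summing over the $m$ colors of $L(v)$ gives $P(M, L) \geq m\, P_\ell(M - \{v\}, m-1)$; as $L$ was an arbitrary $m$-assignment, $P_\ell(M, m) \geq m\, P_\ell(M - \{v\}, m-1)$, and taking the maximum over $v \in V(M)$ yields the first inequality. (This part uses only $m \geq 2$.)

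For the second inequality it is enough to produce a single vertex $v$ with $P_\ell(M - \{v\}, m-1) \geq 1$, i.e. with $M - \{v\}$ being $(m-1)$-choosable. Here is where strong chromatic-choosability enters: by Proposition~\ref{pro: obvious}(iv), $\chi_\ell(M - \{v\}) < k$ for every $v$, and since $m \geq k$ this gives $\chi_\ell(M - \{v\}) \leq k-1 \leq m-1$, so every $(m-1)$-assignment for $M - \{v\}$ has a proper coloring and $P_\ell(M - \{v\}, m-1) \geq 1$. Therefore $m \max_{v \in V(M)} P_\ell(M - \{v\}, m-1) \geq m$. I do not expect a genuine obstacle in this proof; the only points requiring care are getting the color-of-$v$ bijection exactly right and the monotonicity $P(G,L') \geq P_\ell(G,j)$ for $|L'(\cdot)| \geq j$, and recognizing that the hypothesis on $M$ is used solely through Proposition~\ref{pro: obvious}(iv).
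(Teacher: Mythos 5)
Your proof is correct and follows essentially the same route as the paper's: both count proper $L$-colorings of $M$ by the color assigned to a fixed vertex $v$, reduce each class to colorings of $M-\{v\}$ from lists of size at least $m-1$, and invoke Proposition~\ref{pro: obvious}(iv) to guarantee $P_\ell(M-\{v\},m-1)\geq 1$. The only cosmetic difference is that you delete the color $c$ only from the lists of neighbors of $v$ (obtaining a bijection), whereas the paper deletes it from every list of $M-\{v\}$ (obtaining an injection); both suffice for the stated inequality.
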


\begin{proof}
Suppose that $L$ is an arbitrary $m$-assignment for $M$, and suppose $v$ is an arbitrary element of $V(M)$.  We claim that for any $\alpha \in L(v)$, there is a proper $L$-coloring, $c$, for $M$ such that $c(v) = \alpha$.

\par

We construct $c$ as follows.  We begin by letting $c(v) = \alpha$ and $M' = M - \{v \}$.  Then, for each $u \in V(M')$, we let $L'(u) = L(u) - \{\alpha \}$.  Clearly, $|L'(u)| \geq m-1 \geq k-1$ for each $u \in V(M')$.  We can complete our proper $L$-coloring, $c$, for $M$ if there is a proper $L'$-coloring of $M'$.  The fact that there is a proper $L'$-coloring of $M'$ follows from Proposition~\ref{pro: obvious} Statement~(iv).

\par

Since there are $m$ colors in $L(v)$, we have that
$$P(M,L) \geq m P_\ell(M - \{v\}, m-1) \geq m.$$
Since $L$ and $v$ were arbitrary, the desired result follows.
\end{proof}

It is well known (see~\cite{R68}) that $P(C_{n},k)=(k-1)^{n}+(-1)^n(k-1)$ and $P(K_n,k) = \prod_{i=0}^{n-1} (k-i)$.  It is easy to see that for each $n,k \in \N$, $P(K_n,k)=P_{\ell}(K_n,k)$, and it was recently shown in~\cite{KN16} that for each $n,k \in \N$, $P(C_n,k) = P_{\ell}(C_n,k)$.  So, if $M$ is a strongly chromatic-choosable graph isomorphic to a complete graph or odd cycle, then $P(M,k) = P_{\ell}(M,k)$ for all $k \in \N$ and we can easily compute $P_{\ell}(M,k)$.  We now show that this also holds when $M$ is the join of an odd cycle and complete graph.

Using a classic result on the chromatic polynomial of the join of two graphs (see for example~\cite{B94}), it is easy to see that for any graph $G$ and $n \in \N$, $P(G \vee K_n , k) = P(G, k-n)P(K_n, k)$.  We now can prove some bounds on the list color function of an arbitrary graph joined with a complete graph.

\begin{pro} \label{pro: listfunctionjoin}
For any graph $G$ and $n, k \in \N$,
$$P_{\ell}(G, k-n) P(K_n, k) \leq P_{\ell}(G \vee K_n , k) \leq P(G \vee K_n , k) = P(G, k-n)P(K_n, k) .$$
In particular, when $P_{\ell}(G, m) = P(G, m)$ for all $m \in \N$, $P_{\ell}(G \vee K_n , k) = P(G \vee K_n , k)$ for all $k \in \N$.
\end{pro}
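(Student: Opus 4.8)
The plan is to establish the four-term chain from the outside in. The final equality $P(G\vee K_n,k)=P(G,k-n)P(K_n,k)$ is exactly the classical chromatic-polynomial identity for a join recalled in the paragraph just above, so nothing is needed there, and the middle inequality $P_\ell(G\vee K_n,k)\le P(G\vee K_n,k)$ is immediate from the definition of the list color function: the constant $k$-assignment $L_0(v)=\{1,\dots,k\}$ is one particular $k$-assignment with $P(G\vee K_n,L_0)=P(G\vee K_n,k)$, while $P_\ell$ is a minimum over all $k$-assignments. So the content is the leftmost inequality $P_\ell(G,k-n)P(K_n,k)\le P_\ell(G\vee K_n,k)$.

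For this, I would fix an arbitrary $k$-assignment $L$ for $G\vee K_n$ and show $P(G\vee K_n,L)\ge P_\ell(G,k-n)P(K_n,k)$; minimizing over $L$ then yields the bound. Write $V(K_n)=\{w_1,\dots,w_n\}$. Since each $w_i$ is adjacent to every vertex of $G$ and to every other $w_j$, a proper $L$-coloring of $G\vee K_n$ is the same datum as a choice of pairwise distinct colors $c_i\in L(w_i)$ together with a proper $L'$-coloring of $G$, where $L'(v)=L(v)\setminus\{c_1,\dots,c_n\}$; this correspondence is a bijection. Hence
\[
P(G\vee K_n,L)=\sum_{(c_1,\dots,c_n)}P\bigl(G,L'_{(c_1,\dots,c_n)}\bigr),
\]
the sum over all ordered tuples of pairwise distinct colors with $c_i\in L(w_i)$. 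Every summand is nonnegative, so it suffices to bound below the number of tuples and the size of each summand. The number of tuples is precisely $P(K_n,L|_{V(K_n)})$, which is at least $P_\ell(K_n,k)=P(K_n,k)$ because $L$ restricted to the clique is a $k$-assignment and $P_\ell(K_n,k)=P(K_n,k)$ is the standard fact recalled earlier. For each fixed tuple, at most $n$ distinct colors are deleted from each list, so $|L'(v)|\ge k-n$ for every $v\in V(G)$; since passing to arbitrary size-$(k-n)$ sublists only decreases the number of colorings, $P(G,L')\ge P_\ell(G,k-n)$ whenever $k-n\ge 0$. Therefore $P(G\vee K_n,L)\ge P(K_n,k)\,P_\ell(G,k-n)$, and minimizing over $L$ completes the inequality.

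It remains to handle the degenerate range and the ``in particular'' clause. If $k<n$, then $G\vee K_n\supseteq K_n$ is not $k$-colorable, so every $k$-assignment (e.g.\ $L_0$) admits no proper coloring and $P_\ell(G\vee K_n,k)=0$, while $P(K_n,k)=0$ makes the leftmost quantity $0$ as well; the remaining case $k-n<0$ is covered by the convention $P_\ell(G,k-n)=0$ from the footnote. For the last sentence, when $P_\ell(G,m)=P(G,m)$ for all $m\in\N$ (with $m=k-n\le 0$ absorbed by the same convention together with $P(K_n,k)=0$ for $k<n$), the two outer quantities of the chain coincide, $P_\ell(G,k-n)P(K_n,k)=P(G,k-n)P(K_n,k)=P(G\vee K_n,k)$, which squeezes the whole chain to equality and in particular gives $P_\ell(G\vee K_n,k)=P(G\vee K_n,k)$. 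The only point requiring genuine care is bookkeeping rather than any real combinatorial obstacle: tracking that list sizes stay at $\ge k-n$ (not smaller) after the $n$ deletions, using monotonicity of $P(G,\cdot)$ under enlarging lists to drop down to $P_\ell(G,k-n)$, and checking the small ranges where $K_n$ fails to be $k$-colorable.
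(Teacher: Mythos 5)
Your proposal is correct and follows essentially the same route as the paper: color the clique first (at least $P(K_n,k)$ ways, since $P_\ell(K_n,k)=P(K_n,k)$), then color $G$ from the reduced lists of size at least $k-n$ (at least $P_\ell(G,k-n)$ ways), and minimize over $L$. Your treatment is slightly more explicit about the degenerate range $k<n+\chi_\ell(G)$ and the sum-over-tuples bookkeeping, which the paper dismisses as trivial, but the argument is the same.
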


\begin{proof}
The second inequality is trivial.  So, we just prove the first inequality.  Suppose $H = G \vee K_n$, $G_1$ is the copy of $K_n$ used to form $H$, and $G_2$ is the copy of $G$ used to form $H$.  The result is trivial when $k < n + \chi_{\ell}(G)$.  So, assume that $k \geq n + \chi_{\ell}(G)$ and $L$ is an arbitrary $k$-assignment for $H$.  Suppose we find a proper $L$-coloring of $H$ by first coloring $G_1$ then coloring $G_2$.  Notice that there are at least $P(K_n, k)$ possible proper $L$-colorings of $G_1$.  After we color $G_1$, there are at least $k-n$ possible color choices in $L(v)$ that may be used on each $v \in V(G_2)$.  Thus, there are at least $P_{\ell}(G, k-n)$ possible proper $L$-colorings of $G_2$.  So, $P_{\ell}(G, k-n) P(K_n, k) \leq P(H,L)$.  The result follows since $L$ was arbitrary.
\end{proof}


Combining results mentioned thus far, we have the following corollary.

\begin{cor} \label{lem: listfunction2}
For $n, l, k \in \N$,
$$P_{\ell}(C_{2l+1} \vee K_n,k)=P(C_{2l+1} \vee K_n,k)=\left[ (k-n-1)^{2l+1}-(k-n-1) \right] \prod_{i=0}^{n-1} (k-i).$$
\end{cor}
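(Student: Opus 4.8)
The plan is to chain together the three facts the excerpt has already assembled. First I would recall that $P(C_n,k) = (k-1)^n + (-1)^n(k-1)$, so for the odd cycle $C_{2l+1}$ we get $P(C_{2l+1},k) = (k-1)^{2l+1} - (k-1)$; and that $P(K_n,k) = \prod_{i=0}^{n-1}(k-i)$. Second, I would invoke the result cited from~\cite{KN16} that $P_\ell(C_m,k) = P(C_m,k)$ for all $m,k \in \N$, which in particular gives $P_\ell(C_{2l+1},m) = P(C_{2l+1},m)$ for all $m$. Third, I would apply Proposition~\ref{pro: listfunctionjoin} with $G = C_{2l+1}$: since $P_\ell(C_{2l+1},m) = P(C_{2l+1},m)$ for all $m \in \N$, the ``in particular'' clause of that proposition yields $P_\ell(C_{2l+1} \vee K_n, k) = P(C_{2l+1} \vee K_n, k)$ for all $k \in \N$.

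It then remains to identify $P(C_{2l+1} \vee K_n, k)$ explicitly. Here I would use the classical join formula for the chromatic polynomial, $P(G \vee K_n, k) = P(G, k-n)\,P(K_n, k)$, which the excerpt recalls just before Proposition~\ref{pro: listfunctionjoin}. Substituting $G = C_{2l+1}$ gives
\[
P(C_{2l+1} \vee K_n, k) = P(C_{2l+1}, k-n)\,P(K_n, k) = \bigl[(k-n-1)^{2l+1} - (k-n-1)\bigr]\prod_{i=0}^{n-1}(k-i),
\]
which is exactly the claimed closed form. Combining this with the equality of the list color function and the chromatic polynomial established in the previous paragraph completes the proof.

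Honestly there is no real obstacle here: the corollary is a bookkeeping assembly of three previously stated results (the known formulas for $P(C_n,k)$ and $P(K_n,k)$, the theorem $P_\ell(C_m,k)=P(C_m,k)$ from~\cite{KN16}, and Proposition~\ref{pro: listfunctionjoin}) plus the standard join identity for chromatic polynomials. The only point requiring the tiniest care is making sure the hypothesis of the ``in particular'' clause of Proposition~\ref{pro: listfunctionjoin} is literally met, namely that $P_\ell(C_{2l+1},m)=P(C_{2l+1},m)$ holds for \emph{all} $m \in \N$ (not just $m \geq \chi_\ell$), which is precisely what the cited result provides. So the writeup will be short: state the three ingredients, apply the proposition, and plug into the join formula.
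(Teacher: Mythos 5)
Your proposal is correct and is exactly the argument the paper intends: the corollary is stated as a direct combination of the formulas $P(C_n,k)=(k-1)^n+(-1)^n(k-1)$ and $P(K_n,k)=\prod_{i=0}^{n-1}(k-i)$, the result of~\cite{KN16} that $P_\ell(C_n,k)=P(C_n,k)$ for all $n,k$, the join identity $P(G\vee K_n,k)=P(G,k-n)P(K_n,k)$, and the ``in particular'' clause of Proposition~\ref{pro: listfunctionjoin}. Your note about verifying the hypothesis for \emph{all} $m\in\N$ is the right point of care, and it is indeed supplied by the cited result.
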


\subsection{Chromatic-Choosability with Stars}

We will now prove a result in the spirit of Lemma~\ref{lem: main}.  The idea driving the following lemma is that if we take the Cartesian product of a strongly chromatic-choosable graph and a copy of $K_{1,s}$ (i.e. a star graph), we should be able to prove the graph is chromatic-choosable for certain $s>1$.  For the lemma it is useful to note that $P_{\ell}(G,k) \geq 2$ when $G$ is a strong $k$-chromatic-choosable graph by Theorem~\ref{thm: stronglistcolor} (this is under the usual assumption that $k \geq 2$).

\begin{lem} \label{lem: star}
Let $M$ be a strong $k$-chromatic-choosable graph with $V(M)= \{u_1, \ldots, u_{n}\}$.  Suppose $s < P_{\ell}(M,k)$, and $B=K_{1,s}$ with partite sets $X=\{v_0 \}$ and $Y=\{v_1, \ldots, v_s \}$.  Let $G= M \square B$, and for $0 \leq i \leq s$ let $V_i$ be the subset of $V(G)$ that consists of all the vertices with second coordinate $v_i$.   Suppose that $L$ is an arbitrary $k$-assignment for $G$.  Then, there exists a proper coloring, $c$, for $G[V_0]$ such that $c(v) \in L(v)$ for each $v \in V_0$ and if $L'$ is the list assignment for the vertices in $V(G)-V_0$ given by $L'(u_j,v_i)=L(u_j,v_i)- \{c(u_j,v_0) \}$ for each $(u_j,v_i) \in V(G)-V_0$, then we obtain the following results depending on whether $M$ satisfies the edge condition.
\\
(i)  In the case $M$ satisfies the edge condition, for each $1 \leq i \leq s$ there exists two distinct proper colorings, $c_{i,1}$ and $c_{i,2}$, for $G[V_i]$ such that $c_{i,t}(v) \in L'(v)$ for each $v \in V_i$ and $t=1,2$.
\\
(ii)	In the case $M$ does not satisfy the edge condition, for each $1 \leq i \leq s$ there exists a proper coloring, $c_i$, for $G[V_i]$ such that $c_i(v) \in L'(v)$ for each $v \in V_i$.
\end{lem}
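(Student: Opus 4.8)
The plan is to set up a counting argument that produces the coloring $c$ of $G[V_0]$ with the right property, and then to invoke the earlier structural lemmas on the remaining copies of $M$. First I would observe that $G[V_0]$ is a copy of $M$ and $L$ restricted to $V_0$ is a $k$-assignment for it, so since $M$ is strong $k$-chromatic-choosable there is at least one proper $L$-coloring of $G[V_0]$; more precisely, by the definition of the list color function there are at least $P_\ell(M,k)$ of them. Call this set of colorings $\mathcal{C}$. For each coloring $c \in \mathcal{C}$ and each leaf index $i$ with $1 \le i \le s$, deleting the color $c(u_j,v_0)$ from $L(u_j,v_i)$ yields a list assignment $L'_i$ on $G[V_i]$ (another copy of $M$) with $|L'_i(v)| \ge k-1$ for every $v$. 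The key point is to guarantee that we can choose $c$ so that for every $i$ the assignment $L'_i$ is \emph{not} a constant $(k-1)$-assignment; once we have that, part~(ii) follows from Proposition~\ref{pro: obvious}(ii) and part~(i) follows from Lemma~\ref{lem: notunique}.

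So the heart of the argument is the following claim: there exists $c \in \mathcal{C}$ such that for each $i \in \{1,\dots,s\}$, the list assignment $L'_i$ on $G[V_i]$ is non-constant. I would prove this by a counting/union-bound over the $s$ leaves. Fix a leaf $i$. The assignment $L'_i$ is a constant $(k-1)$-assignment exactly when there is a fixed $(k-1)$-set $W$ such that for every $j$, $L(u_j,v_i) - \{c(u_j,v_0)\} = W$; equivalently, for every $j$, $c(u_j,v_0)$ is the unique element of $L(u_j,v_i) \setminus W$ (so in particular $W \subseteq L(u_j,v_i)$ and $|L(u_j,v_i)| = k$ forces $L(u_j,v_i) = W \cup \{c(u_j,v_0)\}$). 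Thus for each choice of the "bad" set $W$, the coloring $c$ on $V_0$ is completely determined (if it exists at all) by $c(u_j,v_0) = $ the element of $L(u_j,v_i)\setminus W$. Hence there is \emph{at most one} coloring $c \in \mathcal{C}$ that makes $L'_i$ constant --- because the bad set $W$, if it arises, is forced: $W = L(u_j,v_i) \setminus \{c(u_j,v_0)\}$ must be the same for all $j$, and different $c$'s giving a constant $L'_i$ would have to give the same $W$ and hence the same $c$. (One can phrase this as: the map $c \mapsto $ "the common value of $L(u_j,v_i)\setminus\{c(u_j,v_0)\}$" is injective on the set of colorings making $L'_i$ constant.) Therefore at most $s$ colorings in $\mathcal{C}$ are "bad" for some leaf, and since $|\mathcal{C}| \ge P_\ell(M,k) > s$, there is a $c \in \mathcal{C}$ that is good for every leaf simultaneously.

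The main obstacle is exactly the counting step just described --- pinning down that each leaf rules out at most one coloring of $G[V_0]$, so that the hypothesis $s < P_\ell(M,k)$ is strong enough. After that, everything is mechanical: fix such a $c$, form $L'$ as in the statement, and for each $i$ apply Proposition~\ref{pro: obvious}(ii) (giving one proper $L'$-coloring $c_i$ of $G[V_i]$, yielding~(ii)) or, when $M$ additionally satisfies the edge condition, apply Lemma~\ref{lem: notunique} to the non-constant assignment $L'$ restricted to $V_i$ (giving two distinct proper colorings $c_{i,1}, c_{i,2}$, yielding~(i)). I should double-check the edge cases where $|C|$-type degeneracies occur, in particular that $|L(u_j,v_i)\setminus\{c(u_j,v_0)\}|$ is genuinely $k-1$ (it is, since $c(u_j,v_0) \in L(u_j,v_0)$ need not lie in $L(u_j,v_i)$, but when it does not, deleting it does nothing and $L'_i(u_j,v_i)$ has size $k$, which only helps); a cleaner way to state the claim is that $L'_i$ fails to be a constant $(k-1)$-assignment unless $c$ is one specific coloring, and the one-specific-coloring bound is all we need.
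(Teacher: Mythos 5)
Your overall architecture is exactly the paper's: take the set $\mathcal{C}$ of at least $P_\ell(M,k)$ proper $L$-colorings of $G[V_0]$, show that each leaf $v_i$ rules out at most one member of $\mathcal{C}$, use $s < P_\ell(M,k)$ to find a coloring that is good for all leaves simultaneously, and finish with Proposition~\ref{pro: obvious} Statement (ii) and Lemma~\ref{lem: notunique}. However, there is a gap at the one step you yourself identify as the heart of the matter. You claim that at most one $c \in \mathcal{C}$ makes $L'_i$ a constant $(k-1)$-assignment, and your justification is that ``different $c$'s giving a constant $L'_i$ would have to give the same $W$.'' That sub-claim is asserted, not proved, and it is not automatic: a priori two distinct bad colorings $c_1$ and $c_2$ could produce two distinct constant lists $W_1 \neq W_2$. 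Indeed, since (as you correctly observe) $c$ is determined by $W$, distinct bad colorings \emph{must} produce distinct $W$'s, so your parenthetical remark that the map $c \mapsto W$ is injective points in the wrong direction: injectivity of that map bounds nothing.

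The missing argument (which the paper imports from the inductive step of Lemma~\ref{lem: main}) runs as follows. Suppose $c_1 \neq c_2$ in $\mathcal{C}$ are both bad for leaf $i$, with constant lists $W_1$ and $W_2$; by the determination you observed, $W_1 \neq W_2$. For every $j$, both $W_1$ and $W_2$ are $(k-1)$-element subsets of the $k$-element set $L(u_j,v_i)$, so $W_1 \cup W_2 = L(u_j,v_i)$ and $W_2 - W_1$ consists of a single color; moreover $\{c_1(u_j,v_0)\} = L(u_j,v_i) - W_1 = W_2 - W_1$. Since $W_1$ and $W_2$ do not depend on $j$, this forces $c_1$ to assign the same color to every vertex of $V_0$, contradicting the fact that $c_1$ is a proper coloring of $G[V_0]$, a copy of $M$ with $\chi(M) = k \geq 2$. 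With that supplied, your union bound over the $s$ leaves and the applications of Proposition~\ref{pro: obvious} Statement (ii) and Lemma~\ref{lem: notunique} go through exactly as in the paper.
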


\begin{proof}
We first note that by definition, there are at least $P_{\ell}(M,k)$ proper colorings for $G[V_0]$ that assign a color in $L(v)$ to $v$ for each $v \in V_0$. Let $\mathcal{C}$ be the set of all these colorings.  For each $i \geq 1$, we refer to $c \in \mathcal{C}$ as a \emph{bad coloring for $G[V_i]$} if the list assignment $L''$ for $G[V_i]$ given by $L''(u_j,v_i)=L(u_j,v_i)- \{ c(u_j,v_0) \}$ for each $(u_j,v_i) \in V_i$ is a constant $(k-1)$-assignment for $G[V_i]$.  By the argument in the inductive step of the proof of Lemma~\ref{lem: main} we know that for each $i \geq 1$ there is at most one bad coloring for $G[V_i]$ in $\mathcal{C}$.  Now, we know that:
$$s < P_{\ell}(M,k) \leq |\mathcal{C}|.$$
So, we may conclude that there exists a $c \in \mathcal{C}$ such that $c$ is not a bad coloring for $G[V_i]$ for any $i$ satisfying $1 \leq i \leq s$.  Now, let $L'$ be the list assignment for the vertices in $V(G)-V_0$ given by $L'(u_j,v_i)=L(u_j,v_i)- \{c(u_j,v_0) \}$ for each $(u_j,v_i) \in V(G)-V_0$.  It is easy to see that $|L'(v)| \geq k-1$ for each $v \in V(G)-V_0$.  Moreover, when $L'$ is restricted to $G[V_i]$ for each $i$, we get a list assignment for $G[V_i]$ that is not a constant $(k-1)$-assignment for $G[V_i]$.  So, by Lemma~\ref{lem: notunique}, when $M$ satisfies the edge condition, there must be at least $2$ distinct proper colorings for $G[V_i]$ that assign a color in $L'(v)$ to $v$ for each $v \in V_i$.  By Proposition~\ref{pro: obvious} Statement (ii), when $M$ does not satisfy the edge condition, there is at least $1$ proper coloring for $G[V_i]$ that assigns a color in $L'(v)$ to $v$ for each $v \in V_i$.
\end{proof}

We are now in a position to give the list chromatic number of the Cartesian product of a strongly chromatic-choosable graph and a star.  One should note that, intuitively speaking, a star with many leaves is far from containing a Hamilton path.  So, we see that the result of Lemma~\ref{lem: star} is a start in generalizing Theorem~\ref{thm: mainresult}.

\begin{thm} \label{thm: star}
Let $M$ be a strong $k$-chromatic-choosable graph.  Then,
\[
\chi_{\ell}(M \square K_{1,s}) =
\begin{cases}
k & \text{if } s < P_{\ell}(M,k)\\
k+1 & \text{if } s \geq P_{\ell}(M,k).
\end{cases}
\]
\end{thm}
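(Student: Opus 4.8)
The plan is to prove the two cases by pinning down $\chi_{\ell}(M \square K_{1,s})$ between the two bounds available in every case. On one hand $\chi(M \square K_{1,s}) = \max\{\chi(M),\chi(K_{1,s})\} = k$ since $k \geq 2$, so $\chi_{\ell}(M \square K_{1,s}) \geq k$. On the other hand, ordering the center of the star before its leaves shows $\col(K_{1,s}) = \chi_{\ell}(K_{1,s}) = 2$, so Theorem~\ref{thm: Borow} gives $\chi_{\ell}(M \square K_{1,s}) \leq \chi_{\ell}(M) + \col(K_{1,s}) - 1 = k+1$. Thus in both cases $k \leq \chi_{\ell}(M \square K_{1,s}) \leq k+1$, and it remains to decide which value occurs.

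For $s < P_{\ell}(M,k)$ I would show $\chi_{\ell}(M \square K_{1,s}) \leq k$ directly from Lemma~\ref{lem: star}. Let $L$ be an arbitrary $k$-assignment for $G = M \square K_{1,s}$, with $V_0$ the copy of $M$ over the center and $V_1,\dots,V_s$ the copies over the leaves. Lemma~\ref{lem: star} supplies a proper coloring $c$ of $G[V_0]$ with colors from $L$ so that, writing $L'(u_j,v_i)=L(u_j,v_i)-\{c(u_j,v_0)\}$, each leaf copy $G[V_i]$ admits a proper coloring $c_i$ with $c_i(v) \in L'(v)$ for all $v \in V_i$ (part (ii) of the lemma when $M$ fails the edge condition, part (i) when it satisfies it). I then paste $c$ and the $c_i$ together: each $G[V_i]$ is internally proper, there are no edges of $G$ between distinct leaf copies since $v_iv_{i'} \notin E(K_{1,s})$, and the only edges between $V_0$ and $V_i$ join $(u_j,v_0)$ to $(u_j,v_i)$, where $c_i(u_j,v_i) \in L'(u_j,v_i)$ avoids $c(u_j,v_0)$ by construction. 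Hence $G$ is $L$-colorable, so $\chi_{\ell}(G) \leq k$ and therefore $\chi_{\ell}(G)=k$.

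For $s \geq P_{\ell}(M,k)$ I would construct a bad $k$-assignment $L$ for $G$. Fix a $k$-assignment $L_0$ for $M$ attaining the minimum, so the set $\mathcal{C}_0$ of proper $L_0$-colorings of $M$ satisfies $|\mathcal{C}_0| = P_{\ell}(M,k)$; put $L(u_j,v_0) = L_0(u_j)$ on the center copy. Since $|\mathcal{C}_0| \leq s$, injectively assign to each $c \in \mathcal{C}_0$ a distinct leaf $v_{i(c)}$, choose a $(k-1)$-element color set $W_c$ disjoint from $\bigcup_j L_0(u_j)$, and set $L(u_j,v_{i(c)}) = W_c \cup \{c(u_j,v_0)\}$ (a $k$-set); the remaining leaves, if any, get arbitrary $k$-assignments. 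Now if $f$ were a proper $L$-coloring of $G$, its restriction to $V_0$ would be a proper $L_0$-coloring of $M$, hence equal to some $c \in \mathcal{C}_0$; then properness along the edges $(u_j,v_0)(u_j,v_{i(c)})$ forces $f(u_j,v_{i(c)}) \in L(u_j,v_{i(c)}) \setminus \{c(u_j,v_0)\} = W_c$ for every $j$, so $f$ restricted to $G[V_{i(c)}] \cong M$ uses only the $k-1$ colors of $W_c$, contradicting $\chi(M)=k$. Thus $G$ is not $L$-colorable, $\chi_{\ell}(G) \geq k+1$, and so $\chi_{\ell}(G)=k+1$.

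The step I expect to be the main obstacle is the lower-bound construction, and specifically seeing why the threshold is \emph{exactly} $P_{\ell}(M,k)$. The mechanism is that strong $k$-chromatic-choosability forces every bad $(k-1)$-assignment of $M$ to be constant, so to block a given center-coloring $c$ one must engineer a leaf copy whose list, after deleting the colors dictated by $c$, collapses to a constant $(k-1)$-assignment; the number of center-colorings that must be blocked, minimized over the choice of list on the center copy, is precisely $P_{\ell}(M,k)$. By contrast the $s < P_{\ell}(M,k)$ direction is short once Lemma~\ref{lem: star} is in hand, since that lemma already absorbs the ``at most one bad coloring per leaf copy'' counting inherited from Lemma~\ref{lem: main}.
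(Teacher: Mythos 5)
Your proposal is correct and follows essentially the same route as the paper: the upper bound $k+1$ via Theorem~\ref{thm: Borow}, the case $s < P_{\ell}(M,k)$ by pasting the colorings supplied by Lemma~\ref{lem: star}, and the case $s \geq P_{\ell}(M,k)$ by taking a $k$-assignment on the center copy realizing exactly $P_{\ell}(M,k)$ colorings and, for each such coloring $c$, building a leaf list $W_c \cup \{c(u_j,v_0)\}$ that collapses to a constant $(k-1)$-assignment once $c$ is used. The only cosmetic difference is that the paper uses a single $(k-1)$-set $B$ for all leaves rather than a separate $W_c$ per leaf, which changes nothing.
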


\begin{proof}
The fact that $\chi_{\ell}(M \square K_{1,s})=k$ when $s < P_{\ell}(M,k)$ follows from Lemma~\ref{lem: star} and the fact that $\chi(M)=k$.  So, suppose that $s \geq P_{\ell}(M,k)$.  As in the proof of Lemma~\ref{lem: star}, let $V(M)= \{u_1, u_2, \ldots, u_{n}\}$.  Suppose $B=K_{1,s}$ with partite sets $X=\{v_0 \}$ and $Y=\{v_1, \ldots, v_s \}$.  Let $G= M \square B$, and for $0 \leq i \leq s$ let $V_i$ be the subset of $V(G)$ that consists of all the vertices with second coordinate $v_i$.  We need to show that $\chi_{\ell}(G) = k+1$.  Since $\col(B)=2$, Theorem~\ref{thm: Borow} implies that $\chi_{\ell}(G) \leq k+2-1 = k+1$.  So, we need to construct a $k$-assignment, $L$, for $G$ such that there is no proper $L$-coloring for $G$.

\par

In order to construct $L$, we begin by assigning to the vertices in $V_0$ lists of size $k$ such that there are precisely $P_{\ell}(M,k)$ proper list colorings of $G[V_0]$.  We let $t = P_{\ell}(M,k)$ and we let $\mathcal{C} = \{c_1, c_2, \ldots, c_t \}$ be the set of proper list colorings of $G[V_0]$.  Now, let $B$ be a set of $(k-1)$ elements each of which is not in $\cup_{i=1} ^ n L(u_i,v_0)$.  For, $1 \leq j \leq t$ and $1 \leq i \leq n$, let
$$L(u_i,v_j) = B \cup \{c_j(u_i,v_0) \}.$$
If $s > t$ complete the list assignment, $L$, by arbitrarily assigning $k$-element lists to any vertices in $V(G)$ that have second coordinate $v_j$ where $j > t$.

\par

Now, for the sake of contradiction assume that $c$ is a proper $L$-coloring for $G$.  It must be that there is a $c_r \in \mathcal{C}$ such that $c(v) = c_r(v)$ for each $v \in V_0$ (since $c$ must properly color $G[V_0]$).  This means that $c$ restricted to $V_r$ is a proper $L'$-coloring of $G[V_r]$ where $L'$ is the list assignment given by:
$$L'(u_i,v_r)=L(u_i,v_r)-\{c_r(u_i,v_0) \} = B$$
for each $(u_i,v_r) \in V_r$.  So, $L'$ is a constant $(k-1)$-assignment for $G[V_r]$.  We now have a contradiction since $G[V_r]$ is a copy of $M$, and by definition, it is impossible to obtain a proper coloring of $M$ from a constant $(k-1)$-assignment.
\end{proof}

Lemma 3 in~\cite{BJ06} implies (among other things) that $\chi_{\ell}(C_{2l+1} \square K_{1,s})=4$ when $s \geq 3^{2l+1}$ and $\chi_{\ell}(K_n \square K_{1,s})=n+1$ when $s \geq n^n$.  The following corollary, which immediately follows from the results mentioned in Subsection~\ref{sublistcolor} and Theorem~\ref{thm: star}, improves upon these results, and completely solves the problem of finding the list chromatic number of the Cartesian product of an odd cycle and star, the list chromatic number of the Cartesian product of a complete graph and star, and the list chromatic number of the Cartesian product of a star and the join of a complete graph with an odd cycle.

\begin{cor} \label{cor: oddstar}
For any $n , l \in \N$, we have that:
\\
\[
\chi_{\ell}(C_{2l+1} \square K_{1,s}) =
\begin{cases}
3 & \text{if } s < 2^{2l+1}-2\\
4 & \text{if } s \geq 2^{2l+1}-2.
\end{cases}
\]

\[
\chi_{\ell}(K_n \square K_{1,s}) =
\begin{cases}
n & \text{if } s < n!\\
n+1 & \text{if } s \geq n!.
\end{cases}
\]

\[
\chi_{\ell}((K_n \vee C_{2l+1}) \square K_{1,s}) =
\begin{cases}
n+3 & \text{if } s < \frac{1}{3} (n+3)! (4^l-1)\\
n+4 & \text{if } s \geq \frac{1}{3} (n+3)! (4^l-1).
\end{cases}
\]

\end{cor}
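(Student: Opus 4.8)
The plan is to read off all three formulas directly from Theorem~\ref{thm: star}: once we know that the relevant base graph $M$ is strong $k$-chromatic-choosable and we can evaluate $P_\ell(M,k)$ in closed form, Theorem~\ref{thm: star} tells us that $\chi_\ell(M \square K_{1,s})$ equals $k$ for $s < P_\ell(M,k)$ and $k+1$ for $s \ge P_\ell(M,k)$. So the work splits into (a) checking strong chromatic-choosability of each base graph, and (b) a closed-form evaluation of its list color function at the threshold value of $k$.

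For (a): the odd cycle $C_{2l+1}$ is strong $3$-chromatic-choosable by Proposition~\ref{pro: obvious}(vi); the complete graph $K_n$ is strong $n$-critical by~\cite{ST09}, hence strong $n$-chromatic-choosable; and $K_n \vee C_{2l+1}$ is strong $(n+3)$-chromatic-choosable by applying Proposition~\ref{pro: obvious}(iii) with $p = n$ to $C_{2l+1}$. For (b): by~\cite{KN16} together with the classical formula of~\cite{R68}, $P_\ell(C_{2l+1},3) = P(C_{2l+1},3) = 2^{2l+1}-2$; since $P_\ell(K_n,k)=P(K_n,k)$ for all $k$, we get $P_\ell(K_n,n) = \prod_{i=0}^{n-1}(n-i) = n!$; and Corollary~\ref{lem: listfunction2}, specialized to $k = n+3$, gives
$$P_\ell(C_{2l+1}\vee K_n,\,n+3) = \big[2^{2l+1}-2\big]\prod_{i=0}^{n-1}(n+3-i) = \big[2^{2l+1}-2\big]\frac{(n+3)!}{6} = \tfrac{1}{3}(n+3)!\,(4^l-1),$$
using $\prod_{i=0}^{n-1}(n+3-i) = (n+3)(n+2)\cdots 4 = (n+3)!/6$. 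Substituting these three values of $P_\ell(M,k)$ into Theorem~\ref{thm: star} yields exactly the three displayed piecewise formulas.

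There is no genuinely hard step here: the corollary is a matter of assembling Theorem~\ref{thm: star}, the known equalities $P_\ell = P$ for cycles, complete graphs, and joins of either with a complete graph (Proposition~\ref{pro: listfunctionjoin}, Corollary~\ref{lem: listfunction2}), and the textbook chromatic-polynomial formulas, followed by the elementary simplification $[2^{2l+1}-2](n+3)!/6 = \tfrac13(n+3)!(4^l-1)$. The only things I would double-check carefully are this last algebraic identity and the internal consistency that, since $C_3 = K_3$, the first and second formulas must agree — and indeed they do, the threshold being $2^{3}-2 = 6 = 3!$.
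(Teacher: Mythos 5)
Your proposal is correct and matches the paper's approach exactly: the paper derives this corollary by combining Theorem~\ref{thm: star} with the list-color-function evaluations from Subsection~5.1 ($P_\ell=P$ for odd cycles, complete graphs, and their joins with complete graphs), which is precisely what you do, and your algebraic simplification $(2^{2l+1}-2)(n+3)!/6=\tfrac{1}{3}(n+3)!(4^l-1)$ checks out.
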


For any graph $G$ we say that $H$ is a \emph{subdivision of $G$} if $H$ is a graph obtained from $G$ by replacing the edges of $G$ with pairwise internally disjoint paths.  By using the proof idea of Lemma~\ref{lem: main} and the result of Lemma~\ref{lem: star} Statement (i), we can obtain the following corollary.

\begin{cor} \label{cor: subdivision} Let $M$ be a strong $k$-chromatic-choosable graph that satisfies the edge condition, and $B'$ be a subdivision of the star $K_{1,s}$ with $s < P_{\ell}(M,k)$.  Then, $\chi_{\ell}(M \square B')=k$, that is, $M \square B'$ is chromatic-choosable.
\end{cor}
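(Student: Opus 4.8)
The plan is to prove $\chi_{\ell}(M \square B') \le k$; since $B'$ has an edge and $k \ge 2$, we have $\chi(M \square B') = \max\{\chi(M),\chi(B')\} = k$, so this suffices. Fix an arbitrary $k$-assignment $L$ for $G = M \square B'$; I will produce a proper $L$-coloring. Write $V(M) = \{u_1,\dots,u_n\}$, let $v_0$ be the center of the star, and regard $B'$ as a spider: $v_0$ together with $s$ internally disjoint legs $P^{(1)},\dots,P^{(s)}$, where $P^{(i)}$ is a path from $v_0$ to a leaf of $B'$; let $x^{(i)}$ denote the internal vertex of $P^{(i)}$ adjacent to $v_0$. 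For $w \in V(B')$ write $V_w$ for the copy of $M$ in $G$ with second coordinate $w$. Note that $B'$ has no Hamilton path once $s \ge 3$, so Theorem~\ref{thm: mainresult} does not apply directly; the idea is to color the central copy first and then handle each leg separately as a Cartesian product with a path.

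First I would color the central copy $G[V_{v_0}]$. Running the argument from the proof of Lemma~\ref{lem: star}, but restricted to the $s$ copies $V_{x^{(1)}},\dots,V_{x^{(s)}}$, the set $\mathcal{C}$ of proper colorings of $G[V_{v_0}]$ compatible with $L$ has size at least $P_{\ell}(M,k) > s$, and for each $i$ at most one $c \in \mathcal{C}$ is \emph{bad for $V_{x^{(i)}}$}, meaning that deleting from each list of $V_{x^{(i)}}$ the color $c$ uses at the matching vertex of $V_{v_0}$ leaves a constant $(k-1)$-assignment. Since there are only $s$ legs, some $c \in \mathcal{C}$ is not bad for any of them; fix such a $c$, and use it to color $G[V_{v_0}]$.

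Next I would extend $c$ along each leg independently, using the inductive engine of the proof of Lemma~\ref{lem: main} with $\rho = 1$. Fix a leg $P^{(i)}$ and order its internal vertices $x^{(i)} = y_1, y_2, \dots, y_t$ from $v_0$ outward, so that the only neighbor of $y_r$ in $B'$ among $v_0,y_1,\dots,y_{r-1}$ is $y_{r-1}$ (or $v_0$ when $r=1$). On $V_{y_1}$, deleting $\{c(u_j,v_0)\}$ from each list yields a non-constant assignment of size at least $k-1$ by the choice of $c$, so by Lemma~\ref{lem: notunique}, using that $M$ satisfies the edge condition, there are two distinct proper colorings of $V_{y_1}$ from these reduced lists. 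Then I walk along the leg exactly as in Lemma~\ref{lem: main}: given two proper colorings of $V_{y_{r-1}}$ that differ on that copy, delete from each list of $V_{y_r}$ the color each of them uses at the matching vertex; if both resulting $(k-1)$-assignments were constant, then one of the colorings of $V_{y_{r-1}}$ would be monochromatic on a copy of $M$, impossible since $\chi(M) = k > 1$, so one of them is non-constant and Lemma~\ref{lem: notunique} again gives two proper colorings of $V_{y_r}$; at the leaf copy I retain just one. Because distinct legs meet only at $v_0$, whose copy is already fixed, the choices on different legs do not interact, and gluing $c$ to the selected colorings on all legs gives a proper $L$-coloring of $G$: adjacencies inside a copy are handled because each copy is properly colored, and adjacencies across an edge of $B'$ are precisely what the successive color deletions enforce.

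The only point needing care is the coordination between the central copy and the legs: one must phrase the ``at most one bad coloring'' count of Lemma~\ref{lem: star} so that ``bad'' refers to the first internal vertex of each leg, which is what makes $c$ simultaneously set up the base case of the leg-induction on \emph{every} leg, and one should check the degenerate case of a leg that is a single edge of $B'$ (not subdivided), where the leg-induction collapses to one application of Lemma~\ref{lem: notunique} — or even just Proposition~\ref{pro: obvious}(ii). Beyond this bookkeeping the proof is a direct reuse of Lemma~\ref{lem: star}(i) and the induction inside Lemma~\ref{lem: main}, so I do not expect any genuine obstacle.
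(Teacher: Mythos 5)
Your proof is correct and follows exactly the route the paper intends: the paper gives no written proof of this corollary, stating only that it follows by combining the proof idea of Lemma~\ref{lem: main} with Lemma~\ref{lem: star} Statement (i), and your argument --- choose a coloring of the central copy that is not bad for the first internal vertex of any leg (possible since $|\mathcal{C}| \geq P_{\ell}(M,k) > s$ and each leg contributes at most one bad coloring), then run the $\rho=1$ induction of Lemma~\ref{lem: main} independently down each leg --- is precisely that combination. The two points you flag (phrasing ``bad'' relative to the first internal vertices, and the degenerate unsubdivided-edge case) are handled correctly, so nothing is missing.
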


\subsection{Generalizing Theorem~\ref{thm: mainresult} with sharpness for $\rho>1$}

With Corollary~\ref{cor: subdivision} in mind, we are ready to observe a generalization of Theorem~\ref{thm: mainresult} that allows for more general second factors.  The proof relies on combining the proof ideas of Lemmas~\ref{lem: main} and~\ref{lem: star}.  Specifically, we will introduce the concept of \emph{$(M, \rho)$-Cartesian accommodating}, and we will prove the following theorem.

\begin{thm} \label{thm: general}  Suppose that $M$ is a strong $k$-chromatic-choosable graph that satisfies the edge condition, and suppose that $H$ is a $(M, \rho)$-Cartesian accommodating graph.  Then,
$\chi_{\ell}(M \square H) \leq k + \rho -1.$
\end{thm}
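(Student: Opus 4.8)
The first step is to pin down the definition of \emph{$(M,\rho)$-Cartesian accommodating} in the form that makes the theorem provable. I would define it recursively in $\rho$ so that: (a) for $\rho=1$ it specializes to the graphs covered by Corollary~\ref{cor: subdivision}, namely subdivisions of stars $K_{1,s}$ with $s<P_\ell(M,k)$ (hence also paths); (b) it contains every graph admitting a Hamilton path in which each vertex has back-degree at most $\rho$, so that Theorem~\ref{thm: mainresult} becomes a special case; and (c) the recursive step builds an $(M,\rho)$-Cartesian accommodating graph $H$ by designating a ``root copy'' $w$ of $M$, attaching to $w$ at most $P_\ell(M,k+\rho-2)$ pairwise vertex-disjoint $(M,\rho-1)$-Cartesian accommodating graphs, each joined by a single edge or a subdivided edge (the subdivision paths playing the role of the path-portions handled by Lemma~\ref{lem: main}), with back-degree at most $\rho$ along an ordering that starts at $w$. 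With the definition in hand, the plan is to prove $\chi_\ell(M\square H)\le k+\rho-1$ by induction on $\rho$, with an inner induction on $|V(H)|$ mirroring the induction in Lemma~\ref{lem: main}.

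For the base case $\rho=1$ there is nothing new to do: $\chi_\ell(M\square H)\le k$ for every $(M,1)$-Cartesian accommodating $H$ is exactly Corollary~\ref{cor: subdivision} (itself obtained by fusing the idea of Lemma~\ref{lem: main} with Lemma~\ref{lem: star}(i)). For the inductive step, fix an arbitrary $(k+\rho-1)$-assignment $L$ for $M\square H$ and locate the root copy $G[V_w]$. The lists on $V_w$ have size $k+\rho-1\ge k$, so by definition there are at least $P_\ell(M,k+\rho-1)$ proper $L$-colorings of $G[V_w]$, and for each attached piece the ``at most one bad coloring'' argument from the inductive step of Lemma~\ref{lem: main} (exactly as invoked in the proof of Lemma~\ref{lem: star}) shows that at most one such coloring of $G[V_w]$ forces the induced list assignment on the root copy of that piece to be a constant $(k+\rho-2)$-assignment. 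Since at most $P_\ell(M,k+\rho-2)\le P_\ell(M,k+\rho-1)$ pieces are attached, I can select a coloring $c_w$ of $G[V_w]$ that is good for all attached pieces simultaneously. Deleting the colors used by $c_w$ along the connecting edges leaves, on each attached $(M,\rho-1)$-Cartesian accommodating piece $H_\ell$, a list assignment all of whose lists have size at least $(k+\rho-1)-1=k+(\rho-1)-1$, so by the induction hypothesis $M\square H_\ell$ is $L$-colorable; any path-like remainder is handled as in Lemma~\ref{lem: main}, using Lemma~\ref{lem: notunique} and the edge condition on $M$ to keep the ``two colorings differing only on the last copy'' invariant alive along the subdivision paths. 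Splicing these colorings together with $c_w$ yields a proper $L$-coloring of $M\square H$, giving $\chi_\ell(M\square H)\le k+\rho-1$.

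I expect the main obstacle to be organizational rather than a single hard estimate: one must design $(M,\rho)$-Cartesian accommodating so that every vertex of $H$ fits the inductive scheme — as the root of a subtree, as an internal vertex of a path-portion where the two-coloring invariant propagates, or as a leaf copy governed by the $P_\ell$ counting — and simultaneously so that the ``at most one bad coloring per attached piece'' claim genuinely reduces to the computation already carried out inside Lemma~\ref{lem: main}. The one numerical point to check carefully is that deleting a single color from the root-copy lists of the attached pieces leaves precisely a $(k+\rho-2)$-assignment there, so that the induction on $\rho$ closes with the right parameter; this is exactly the value $P_\ell(M,k+\rho-2)$ that appears in the sharpness construction $S_{M,B',\rho}$, and making the definition and the proof agree on it is the crux.
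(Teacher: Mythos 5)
Your proposal does not prove the theorem as stated, because you substitute your own (tree-recursive) definition of \emph{$(M,\rho)$-Cartesian accommodating} for the one the paper actually uses. In the paper, $H$ is $(M,\rho)$-Cartesian accommodating when $V(H)$ admits a partition into independent sets $I_1,\ldots,I_s$, with $\eta(v)$ counting the neighbors of $v\in I_\lambda$ in $I_1\cup\cdots\cup I_{\lambda-1}$, $\rho\ge\max\{1,\max_v\eta(v)\}$, and $F=\{v:\eta(v)=\rho\}$, subject to two conditions: every $v\in I_\lambda\cap F$ has a neighbor in $I_{\lambda-1}$, and every $v\in I_\lambda - F$ is adjacent to fewer than $P_\ell(M,(k+\rho-1)-\eta(v))$ vertices of $I_{\lambda+1}\cap F$ (while $v\in I_\lambda\cap F$ is adjacent to at most one such vertex). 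This class is much broader than the rooted-tree structures your recursion in clause (c) generates; it includes, for instance, any graph with a Hamilton path of back-degree at most $\rho$ by taking singleton layers. Correspondingly, the paper's proof (Lemma~\ref{lem: general}) is an induction on the number of layers $s$, not on $\rho$: the invariant carried forward is that after coloring $M\square(H-I_s)$, each copy $G[V_u]$ with $u\in I_s\cap F$ still has at least $2$ available proper colorings and each copy with $u\in I_s-F$ has at least $P_\ell(M,k+\rho-1-\eta(u))$. Your root-and-attached-pieces induction on $\rho$ does not reconstruct this invariant and only covers a proper subclass, so it cannot yield the stated theorem.

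There is also a substantive error in your bookkeeping of what a ``bad coloring'' is. Since $M$ is strong $k$-chromatic-choosable, the only obstruction to coloring a copy of $M$ is a \emph{constant $(k-1)$-assignment}; a constant $(k+\rho-2)$-assignment is harmless for $\rho\ge 2$. The danger therefore arises only at copies $G[V_u]$ with $u\in F$, i.e., with full back-degree $\rho$, whose lists can be whittled down to size $k-1$; this is precisely why the definition isolates $F$ and why condition (1) (a neighbor in the previous layer, so the two-coloring trick of Lemma~\ref{lem: main} applies) and condition (2) (a strict upper bound, matching the count $P_\ell(M,(k+\rho-1)-\eta(v))$ of guaranteed colorings, on the number of $F$-neighbors in the next layer) are imposed. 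Your count ``at most $P_\ell(M,k+\rho-2)\le P_\ell(M,k+\rho-1)$ pieces, each killing at most one coloring'' needs a strict inequality to leave a surviving good coloring, and in the paper that strictness is built into the definition rather than derived. To repair the argument you would need to adopt the paper's layered definition and prove the layer-by-layer invariant above, ordering the vertices of $I_{s-1}$ and updating the lists on $\bigcup_{u\in I_s}V_u$ one copy at a time exactly as in the proof of Lemma~\ref{lem: general}.
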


We will see that graphs that contain a Hamilton path, $w_1, w_2, \ldots, w_m$, such that $\rho \geq 1$ and $w_i$ has at most $\rho$ neighbors among $w_1, \ldots, w_{i-1}$ are $(M,\rho)$-Cartesian accommodating along with many other classes of graphs.  So, Theorem~\ref{thm: general} truly generalizes Theorem~\ref{thm: mainresult}.  We will now introduce the concept of $(M,\rho)$-Cartesian accommodating.

\par

Let $M$ be a given strong $k$-chromatic-choosable graph. Suppose $H$ is a graph such that $V(H)$ can be partitioned into independent sets: $I_1, I_2, \ldots, I_s$.  Let $\eta : V(H) \rightarrow \Z$ be the function defined so that for each $v \in I_{\lambda}$, $1 \leq \lambda \leq s$, $\eta(v)$ is the number of neighbors $v$ has in $ \cup_{i=1}^{\lambda-1} I_i$ (we take this union to be the empty set when $\lambda=1$).  Let $\rho \geq \max \{1,\max_{v \in V(H)} \eta(v)\}$, and $F = \{v \in V(H) | \eta(v) = \rho \}$.  Suppose that $H$ satisfies:
\begin{align*}
&(1)  \text{ For each $\lambda \geq 2$, every $v \in I_{\lambda} \cap F$ is adjacent to at least one vertex in $I_{\lambda-1}$, and}\\
&(2) \text{ For each $\lambda$ satisfying $1 \leq \lambda \leq s-1$, each $v \in I_{\lambda}-F$, $v$ is adjacent to less} \\
& \; \; \; \; \; \text{than $P_{\ell}(G, (k+ \rho -1)- \eta(v))$ vertices in $I_{\lambda+1} \cap F$.  Also for each $v \in I_{\lambda} \cap F$, $v$ is adjacent} \\
& \; \;\ \; \; \text{to at most 1 vertex  in $I_{\lambda+1} \cap F$.}
\end{align*}
We call $H$ \emph{$(M, \rho)$-Cartesian accommodating} when it satisfies these conditions.  Note that we can ensure condition (1) is satisfied by placing a vertex in the independent set $I_\lambda$ with smallest index possible when there is a choice (though there is no guarantee that after this is done condition (2) will be satisfied).  As with the proof of Theorem~\ref{thm: mainresult}, we will prove a lemma which will immediately imply Theorem~\ref{thm: general}.

\par

Suppose that $M$ is a strong $k$-chromatic-choosable graph that satisfies the edge condition with $V(M) = \{v_1, \ldots, v_n \}$.  Suppose $H$ is a $(M, \rho)$-Cartesian accommodating graph (we use the same notation as in the definition).  Let $G = M \square H$.  For each $u \in V(H)$, let $V_{u}$ represent the vertices in $V(G)$ with $u$ as the second coordinate.  Now, the following lemma immediately implies Theorem~\ref{thm: general}.

\begin{lem} \label{lem: general}
Let $L$ be an arbitrary $(k + \rho -1)$-assignment for $G$.  There exists a proper $L$-coloring, $c$, of $M \square (H - I_s)$ that satisfies the following conditions.  For each $v \in \cup_{u \in I_s} V_u$, let $L'(v)$ be the list obtained from $L(v)$ by deleting any colors used by $c$ on vertices adjacent to $v$ in $V(M \square (H - I_s))$.  For each $u \in I_s \cap F$ there are at least 2 proper $L'$-colorings of $G[V_u]$, and for each $u \in I_s - F$ there are at least $P_{\ell}(M, k+ \rho - 1 - \eta(u))$ proper $L'$-colorings of $G[V_u]$.
\end{lem}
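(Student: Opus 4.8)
The plan is to prove Lemma~\ref{lem: general} by induction on $s$, the number of independent sets in the partition $I_1,\dots,I_s$ of $V(H)$; this merges the inductive arguments behind Lemma~\ref{lem: main} (used to handle the ``back-neighbors'' count $\eta$) and Lemma~\ref{lem: star} (used to handle the ``many copies'' counting against $P_\ell$). For the base case $s=1$ the graph $H=I_1$ has no edges, so $M\square(H-I_1)$ is empty, $c$ is the empty coloring, and $L'(v)=L(v)$ for every $v$; each $u\in I_1$ has $\eta(u)=0$ and hence lies in $I_1-F$ (as $\rho\ge 1$), and $G[V_u]\cong M$ gets lists of size $k+\rho-1=(k+\rho-1)-\eta(u)$, so by definition of the list color function there are at least $P_\ell(M,k+\rho-1-\eta(u))$ proper $L'$-colorings of $G[V_u]$.

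For the inductive step take $s\ge 2$ and set $H'=H-I_s$, with the partition $I_1,\dots,I_{s-1}$. One first checks $H'$ is again $(M,\rho)$-Cartesian accommodating: $\eta$ is unchanged on $V(H')$, $\rho\ge\max\{1,\max_{V(H')}\eta\}$ still holds, $F\cap V(H')$ plays the role of $F$, and conditions (1) and (2) for $H'$ are special cases of those for $H$ (with $\lambda$ restricted to $\le s-1$). Apply the inductive hypothesis to $H'$ to get a proper $L$-coloring $c_0$ of $M\square(H-I_{s-1}-I_s)$ such that, over each copy $G[V_{u^*}]$ with $u^*\in I_{s-1}$, the induced sublists (call them $\tilde L$) admit at least $2$ proper colorings when $u^*\in F$ and at least $P_\ell(M,k+\rho-1-\eta(u^*))$ proper colorings (which is $\ge P_\ell(M,k)\ge 2$ by Theorem~\ref{thm: stronglistcolor}, since $\eta(u^*)\le\rho-1$) when $u^*\notin F$. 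We will extend $c_0$ over the copies $\{G[V_{u^*}]:u^*\in I_{s-1}\}$, which are pairwise nonadjacent because $I_{s-1}$ is independent, to obtain the desired $c$; the only thing that can go wrong concerns $u\in I_s\cap F$, where we need $L'|_{V_u}$ to fail to be a constant $(k-1)$-assignment so that Lemma~\ref{lem: notunique} gives $\ge 2$ proper $L'$-colorings of $G[V_u]$ (note $|L'(w)|\ge k+\rho-1-\eta(u)=k-1$ for $w\in V_u$). For $u\in I_s-F$ we have $|L'(w)|\ge k+\rho-1-\eta(u)\ge k$ for all $w\in V_u$, so at least $P_\ell(M,k+\rho-1-\eta(u))$ proper $L'$-colorings exist automatically, whatever $c$ is.

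So I color the copies $G[V_{u^*}]$, $u^*\in I_{s-1}$, one at a time in an arbitrary fixed order. By condition (1) every $u\in I_s\cap F$ has a neighbor in $I_{s-1}$; let $u^\sharp(u)$ be the one colored last in our order. When it is time to color $G[V_{u^*}]$, for each $u\in I_s\cap F$ with $u^\sharp(u)=u^*$ all neighbors of $u$ except $u^*$ are already colored (those in $I_1\cup\dots\cup I_{s-2}$ by $c_0$, those in $I_{s-1}$ before $u^*$; there are none in $I_s$), and the key claim is that then \emph{at most one} proper coloring of $G[V_{u^*}]$ makes $L'|_{V_u}$ a constant $(k-1)$-assignment. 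Since the set of such $u$ is contained in $(I_s\cap F)\cap N_H(u^*)$, which by condition (2) has size $\le 1$ if $u^*\in F$ and size $<P_\ell(M,k+\rho-1-\eta(u^*))$ if $u^*\notin F$, the number of forbidden colorings is strictly smaller than the number of colorings available from the inductive hypothesis; choose an allowed one. After all $u^*\in I_{s-1}$ are treated, the resulting $c$ is a proper $L$-coloring of $M\square(H-I_s)$, and the two observations at the end of the previous paragraph finish the induction (and applying this lemma with any proper $L'$-coloring on each $G[V_u]$, $u\in I_s$, yields $\chi_\ell(M\square H)\le k+\rho-1$, i.e.\ Theorem~\ref{thm: general}).

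To justify the claim, fix $u\in I_s\cap F$ (so $\eta(u)=\rho$, i.e.\ $u$ has exactly $\rho$ neighbors), with the colorings on $N_H(u)\setminus\{u^*\}$ fixed; for $1\le i\le n$ let $D_i\subseteq L(v_i,u)$ be the set of colors these place on neighbors of $(v_i,u)$, so $|D_i|\le\rho-1$. If $|D_i|<\rho-1$ for some $i$ then $|L'(v_i,u)|\ge k$ no matter how $V_{u^*}$ is colored, so $u$ forbids nothing; otherwise put $S_i=L(v_i,u)\setminus D_i$, a $k$-set. For $L'|_{V_u}$ to be a constant $(k-1)$-assignment it is necessary that the color $\gamma_i$ placed on $(v_i,u^*)$ lies in $S_i$ for every $i$ and that $S_i\setminus\{\gamma_i\}$ equal one fixed $(k-1)$-set $S$ for all $i$; such an $S$ must satisfy $S\subseteq\bigcap_i S_i$. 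If $|\bigcap_i S_i|\le k-2$ no such $S$ exists; if $|\bigcap_i S_i|=k-1$ then $S=\bigcap_i S_i$ and each $\gamma_i$ is forced to be the unique element of $S_i\setminus S$, so the coloring of $V_{u^*}$ is forced; if $|\bigcap_i S_i|=k$ then all $S_i$ coincide and the forced coloring of $V_{u^*}$ is constant, which is not a proper coloring of $M$ since $\chi(M)=k\ge 2$. Hence $u$ forbids at most one proper coloring of $G[V_{u^*}]$, proving the claim. The main obstacle is exactly this bookkeeping step: choosing the processing order of $I_{s-1}$ so that when $G[V_{u^\sharp(u)}]$ is colored all other neighbors of $u$ are already fixed, and then matching the count of forbidden colorings against the two inequalities in condition (2) (and invoking condition (1) for the existence of $u^\sharp(u)$); everything else reduces to the inductive hypothesis, Lemma~\ref{lem: notunique} (where the edge condition on $M$ is used), Theorem~\ref{thm: stronglistcolor}, and the definition of $P_\ell$.
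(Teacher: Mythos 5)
Your proof is correct and follows essentially the same route as the paper's: induction on $s$, applying the inductive hypothesis to $H-I_s$, then coloring the copies over $I_{s-1}$ one at a time while using conditions (1) and (2) to dodge the (at most one per vertex of $I_s\cap F$) colorings that would create a constant $(k-1)$-assignment, and finishing with Lemma~\ref{lem: notunique} and the definition of $P_\ell$. Your only real deviation is a bookkeeping refinement --- performing the avoidance only at the last-colored neighbor $u^\sharp(u)$ and spelling out why at most one coloring of $G[V_{u^\sharp(u)}]$ is forbidden --- which makes precise the paper's phrase ``will not lead to a constant $(k-1)$-assignment.''
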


\begin{proof}
The proof is by induction on $s$.  For the base case suppose that $s=1$.  Note $M \square (H - I_s)$ is empty, $\eta(u) = 0$ for each $u \in V(H)$, and there are at least $P_{\ell}(M, k + \rho - 1)$ ways to properly color $G[V_u]$ for each $u \in V(H)$ (since $\rho \geq 1$).  Thus, the base case is complete.

\par

For the induction step assume that $s \geq 2$.  Also assume there exists a proper $L$-coloring, $c$, of $M \square (H - (I_s \cup I_{s-1}))$ that satisfies the following conditions.  For each $v \in \cup_{u \in (I_{s-1} \cup I_s)} V_u$, let $L'(v)$ be the list obtained from $L(v)$ by deleting any colors used by $c$ on vertices adjacent to $v$ in $V(M \square (H - (I_s \cup I_{s-1})))$.  For each $u \in I_{s-1} \cap F$ there are at least 2 proper $L'$-colorings of $G[V_u]$, and for each $u \in I_{s-1} - F$ there are at least $P_{\ell}(M, k+ \rho - 1 - \eta(u))$ proper $L'$-colorings of $G[V_u]$.  We note that since $w \in I_{s} \cap F$ is adjacent to at least one vertex in $I_{s-1}$, each $v \in \cup_{u \in I_s} V_u$ satisfies:
$$|L'(v)| \geq k.$$

\par

Now, suppose that $|I_{s-1}|=a$ and $I_{s-1} = \{w_1, \ldots, w_a \}$.  For each $u \in I_{s-1}$ we will pick a proper $L'$-coloring of $G[V_u]$ that will allow us to prove the desired.  We will do this ``in order" by picking a proper $L'$-coloring of $G[V_{w_1}]$, followed by $G[V_{w_2}]$, $\ldots$, and finally $G[V_{w_a}]$.

\par

We first describe how we pick a proper $L'$-coloring of $G[V_{w_1}]$.  In the case that $w_1 \in I_{s-1} \cap F$, we know that, in $H$, $w_1$ is adjacent to at most 1 vertex in $I_s \cap F$, and there are at least 2 proper $L'$-colorings for $G[V_{w_1}]$.  Suppose $A$ is the set of vertices in $I_s \cap F$ adjacent to $w_1$ in $H$.  So, we choose a proper coloring for $G[V_{w_1}]$ that will not lead to a constant $(k-1)$-assignment for the copy of $M$ corresponding to $G[V_{u}]$.  In the case where $w_1$ is adjacent to no vertices in $ I_s \cap F$ we arbitrarily pick a proper $L'$-coloring for $G[V_{w_1}]$.
\par
Now, consider the case that $w_1 \in I_{s-1} - F$.  In this case we know that in $H$, $w_1$ is adjacent to less than $P_{\ell}(M, k+ \rho - 1 - \eta(w_1))$ vertices in $I_s \cap F$, and there are at least $P_{\ell}(M, k+ \rho - 1 - \eta(w_1))$ proper $L'$-colorings for $G[V_{w_1}]$. Suppose $w_1$ is adjacent to all the vertices in $A \subseteq I_s \cap F$.  Then, we pick a proper $L'$-coloring for $G[V_{w_1}]$ that does not lead to a constant $(k-1)$-assignment for any copy of $M$ of the form $G[V_u]$ where $u \in A$.
\par
After choosing a proper $L'$-coloring of $G[V_{w_1}]$, let $L^{(2)}$ be the list assignment obtained in the following way.  For each $v \in \cup_{u \in I_s} V_u$, let $L^{(2)}(v)$ be the list obtained from $L'(v)$ by deleting any colors used by the proper $L'$-coloring chosen for $G[V_{w_1}]$ on any vertices in $V_{w_1}$ that are neighbors of $v$ in $G$.  We continue coloring the copies of $M$ by following the outline for coloring $G[V_{w_1}]$ described above.  At each stage, for each $v \in \cup_{u \in I_s} V_u$, we let $L^{(t)}(v)$ be the list obtained from $L^{(t-1)}(v)$ by deleting any colors used by the proper $L'$-coloring chosen for $G[V_{w_{t-1}}]$ on any vertices in $V_{w_{t-1}}$ that are neighbors of $v$.  Note that for any $t \geq 2$, if $|L^{(t)}(v)| = k-1$, then $v$ has no neighbors in the yet to be colored copies of $M$ since in this case $\rho$ colors must have been deleted from $L(v)$ to get $L^{(t)}(v)$, and $v$ has at most $\rho$ neighbors with second coordinate in $V(H)-I_s$.  Also, if $L^{(t)}$ restricted to $G[V_u]$ for some $u \in I_s$ is a $(k-1)$-assignment, it must be a non-constant $(k-1)$-assignment.

\par

After we have colored all the copies of $M$ we are left with a list assignment, $L^{(a+1)}$, for each $v \in \cup_{u \in I_s} V_u$.  We notice that for each $u \in I_s \cap F$, we have that for each $v \in V(G[V_u])$,
$$|L^{(a+1)}(v)| \geq k-1$$
and $L^{(a+1)}$ restricted to $G[V_u]$ is not a constant $(k-1)$-assignment.  Thus, there are at least two proper $L^{(a+1)}$- colorings of $G[V_u]$ by Lemma~\ref{lem: notunique}.  Also, for each $u \in I_s - F$ we have that for each $v \in V(G[V_u])$,
$$|L^{(a+1)}(v)| \geq k+ \rho - 1 - \eta(u) \geq k.$$
 Thus, there are at least $P_{\ell}(M, k+ \rho - 1 - \eta(u))$ proper $L^{(a+1)}$- colorings of $G[V_u]$.  Hence the induction is complete.

\end{proof}

It is worth mentioning that if $F \subseteq I_s$ we do not need $M$ to satisfy the edge condition in order to obtain the upper bound on the list chromatic number of Theorem~\ref{thm: general}.

\par

We now show that Theorem~\ref{thm: general} generalizes Theorem~\ref{thm: mainresult}.  To see this suppose that $H$ is a graph that contains a Hamilton path, $w_1, w_2, \ldots, w_m$, such that $\rho \geq 1$ and $w_i$ has at most $\rho$ neighbors among $w_1, \ldots, w_{i-1}$.  If for $1 \leq \lambda \leq m$, we let $I_{\lambda} = \{w_{\lambda}\}$, we see that for any strong $k$-chromatic-choosable graph that satisfies the edge condition, $M$, $H$ is $(M, \rho)$-Cartesian accomodating.

Not only does Theorem~\ref{thm: general} generalize Theorem~\ref{thm: mainresult}, but we can also show that there exist examples of $M$ and $H$ where Theorem~\ref{thm: general} gives a tight bound for any $\rho \in \N$.  Specifically, suppose that $M$ is a strong $k$-chromatic-choosable graph that satisfies the edge condition.  Let $B'$ be some subdivision of $K_{1, P_{\ell}(M,k)-1}$.  For each $t \in \N$ we define the $S_{M,B',t}$ graph inductively.  Let $S_{M,B',1}=B'$.  Then, for $t \geq 2$ we construct $S_{M,B',t}$ as follows.  Take $P_{\ell}(M,k+t-2)$ disjoint copies of $S_{M,B',t-1}$ and join a single vertex to these copies.  The reason we use $P_{\ell}(M,k+t-2)$ copies will become clear in a moment.
\par
Several properties of $S_{M,B',t}$ are immediate.  In particular for each $t \in \N$, $\chi(S_{M,B',t})=t+1$ and $\col(S_{M,B',t})=t+1$.  Moreover, $S_{M,B',t}$ is $(M, t)$-Cartesian accommodating.  So, by Theorem~\ref{thm: general}, $\chi_{\ell}(M \square S_{M,B,t}) \leq k + t -1$.  We now show that this upper bound is best possible.

\begin{pro} \label{pro: generalapplication}  Let $M$ be a strong $k$-chromatic-choosable graph that satisfies the edge condition.  Then, for any $t \in \N$, $\chi_{\ell}(M \square S_{M,B',t}) = k + t -1$.
\end{pro}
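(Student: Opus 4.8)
The upper bound $\chi_{\ell}(M \square S_{M,B',t}) \le k+t-1$ is already available from the discussion preceding the statement ($S_{M,B',t}$ is $(M,t)$-Cartesian accommodating, so Theorem~\ref{thm: general} applies), so the whole content to be proved is the matching lower bound $\chi_{\ell}(M \square S_{M,B',t}) \ge k+t-1$. The plan is to produce, for every $t \in \N$, a $(k+t-2)$-assignment of $M \square S_{M,B',t}$ that admits no proper coloring, by induction on $t$. The construction is recursive and mirrors the bad assignment used in the lower-bound half of Theorem~\ref{thm: star}, with the single leaf-copies of $M$ there replaced by copies of $M \square S_{M,B',t-1}$.

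For the base case $t=1$ one simply notes $\chi(M \square B') = \max\{\chi(M),\chi(B')\} = k \ge 3$, so $\chi_{\ell}(M \square B') \ge k = k+t-1$ and some $(k-1)$-assignment of $M\square B'$ has no proper coloring. For the inductive step assume $t \ge 2$ and that $M \square S_{M,B',t-1}$ carries a $(k+t-3)$-assignment with no proper coloring. Recall that $S_{M,B',t}$ is obtained from $N := P_{\ell}(M,k+t-2)$ disjoint copies $Q_1,\dots,Q_N$ of $S_{M,B',t-1}$ by joining a new vertex $r$ to all vertices of these copies; in $G := M \square S_{M,B',t}$ write $R := G[V_r]$ for the corresponding copy of $M$. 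Since $P_{\ell}(M,k+t-2)$ is attained (being the minimum of a nonempty set of nonnegative integers), fix a $(k+t-2)$-assignment $L_r$ of $R$ with exactly $N$ proper colorings, say $c_1,\dots,c_N$. The key structural observation is that, because $r$ is joined to all of $Q_j$, each vertex $(v,y)$ with $y \in Q_j$ has $(v,r)$ as its unique neighbor in $R$.

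Next I would build the target $(k+t-2)$-assignment $L$: set $L|_R := L_r$, and for each $j$ transport (via the isomorphism $M \square Q_j \cong M \square S_{M,B',t-1}$) a copy $L^{(j)}$ of the inductive bad $(k+t-3)$-assignment, relabeling its colors so that the palette of $L^{(j)}$ avoids every color appearing in $L_r$ (distinct $Q_j$'s are non-adjacent in $G$, so their palettes may overlap freely). Then put $L(v,y) := L^{(j)}(v,y) \cup \{c_j(v,r)\}$ for $y \in Q_j$; the added color is new, so $|L(v,y)| = (k+t-3)+1 = k+t-2$ as required.

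Finally, to see $L$ is bad: if $c$ were a proper $L$-coloring of $G$, its restriction to $R$ would be a proper $L_r$-coloring, hence equal to some $c_j$. Then for every $(v,y)$ with $y \in Q_j$ we have $c(v,y)\in L(v,y)$ and $c(v,y)\neq c(v,r)=c_j(v,r)$, so $c(v,y)\in L(v,y)\setminus\{c_j(v,r)\}=L^{(j)}(v,y)$ (using the unique-neighbor observation), making $c$ restricted to $M\square Q_j$ a proper $L^{(j)}$-coloring and contradicting the inductive choice of $L^{(j)}$. Hence $\chi_{\ell}(G)\ge k+t-1$, completing the induction. The only points that need care are bookkeeping: that the apex copy $R$ genuinely has only $N$ colorings, so that its $N$ colorings can be "covered" one-to-one by the $N$ glued subgraphs, and that the color relabelings keep all list sizes exactly $k+t-2$ while preserving the adjacency-induced list reductions. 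I expect no genuine obstacle here; note in particular that this lower-bound argument never uses the edge condition — that hypothesis enters only through the upper bound furnished by Theorem~\ref{thm: general}.
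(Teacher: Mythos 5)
Your proposal is correct and follows essentially the same route as the paper: induction on $t$, a $(k+t-2)$-assignment on the apex copy of $M$ realizing exactly $P_{\ell}(M,k+t-2)$ colorings, the inductively given bad assignment on each glued copy of $M \square S_{M,B',t-1}$ augmented by the corresponding apex color (with palettes kept disjoint), and the same pigeonhole contradiction. The only cosmetic difference is that you relabel the subgraph palettes to avoid the apex palette whereas the paper chooses the apex palette to avoid the subgraph colors, and your base case cites $\chi_{\ell}\ge\chi=k$ rather than exhibiting the constant $(k-1)$-assignment explicitly.
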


\begin{proof}
For each $t \in \N$ we need only show that there exists a bad $(k+t-2)$-assignment for $M \square S_{M,B',t}$.  We will show what is required by induction on $t$.  The statement when $t=1$ is obvious since we could simply associate a constant $(k-1)$-assignment with each copy of $M$ in $M \square S_{M,B',1}$.

\par

Now, suppose that $t \geq 2$ and the desired statement holds for all natural numbers less than $t$.  Let $H_1, H_2, \ldots, H_{P_{\ell}(M,k+t-2)}$ represent the $P_{\ell}(M,k+t-2)$ disjoint copies of $S_{M,B',t-1}$ used to form $S_{M,B',t}$.  We suppose that each of these copies have $l$ vertices.  Let $v$ represent the single vertex joined to these copies to form $S_{M,B',t}$. Let $v_{1,i}, \ldots, v_{l,i}$ represent the vertices in $V(H_i)$ for $1 \leq i \leq P_{\ell}(M,k+2-t)$.  Let $u_1, u_2, \ldots, u_n$ be the vertices in $V(M)$.  By the inductive hypothesis we know that there is a bad $(k+t-3)$-assignment, $L$, for each of the copies of $M \square S_{M,B',t-1}$ in $M \square S_{M,B',t}$. Let $L'$ be a $(k+t-2)$-assignment for $M$ such that $M$ has precisely $P_{\ell}(M,k+t-2)$ proper $L'$-colorings.  We choose the colors in $L'$ such that none of the colors are in any of the lists associated with $L$.  Suppose that the proper $L'$-colorings of $M$ are $c_1, c_2, \ldots, c_{P_{\ell}(M,k+t-2)}$.

\par

We form a $(k+t-2)$-assignment, $L''$, for $M \square S_{M,B',t}$ as follows.  First, we define $L''$ for the copy of $M$ corresponding to $v$. For each $1 \leq r \leq n$ let
$$L''(u_r,v)=L'(u_r).$$
Now, as $i$ varies from 1 to $P_{\ell}(M,k+t-2)$ we define $L''$ for the vertices in each copy of $M \square H_i$ in $M \square S_{M,B',t}$.   Specifically, for each $1 \leq i \leq P_{\ell}(M,k+t-2)$, $1 \leq j \leq l$, and $1 \leq r \leq n$ let
$$L''(u_r, v_{j,i})=L(u_r, v_{j,i}) \cup \{c_i(u_r) \}.$$
Finally, suppose there is a proper $L''$-coloring of $M \square S_{M,B',t}$.  This proper coloring contains a proper $L'$-coloring, assume it is $c_m$, of the copy of $M$ in $M \square S_{M,B',t}$ corresponding to $v$.  Now, in order for there to be a proper $L''$-coloring of $M \square S_{M,B',t}$, there must be a proper $L$-coloring of the $M \square H_m$ in $M \square S_{M,B',t}$ since $L''(u_r, v_{j,m})- \{c_m(u_r) \}=L(u_r, v_{j,m})$
for each $1 \leq j \leq l$ and $1 \leq r \leq n$.  However, this is impossible.  Thus, $L''$ is a bad $(k+t-2)$-assignment for $M \square S_{M,B',t}$ and our proof is complete.
\end{proof}

We will now conclude this section with an illustrative example.  We know that $C_3$ is strong 3-chromatic-choosable graph that satisfies the edge condition.  We also know that $P_{\ell}(C_3,k) = P(C_3,k) = k(k-1)(k-2).$ Suppose that $H_1 = K_{1,5}$.  Since $P_{\ell}(C_3,3)=6$, it is immediately clear that $H_1$ is $(C_3,1)$-Cartesian accommodating.  Now, for $m \geq 2$, suppose that $H_m$ is graph obtained by taking $P_{\ell}(C_3,m+1)$ disjoint copies of the graph $H_{m-1}$ and join a single vertex to these copies.  So, $H_2 = K_1 \vee 6H_1$, $H_3 = K_1 \vee 24H_2$, $H_4 = K_1 \vee 60H_3$, etc.  We have that $H_m$ is $(C_3,m)$-Cartesian accommodating.  The following facts are also clear:
$ \chi(H_m)=m+1 \; \; \text{and} \; \; \col(H_m) = m+1.$  So, Theorem~\ref{thm: Borow} implies that $\chi_{\ell}(C_3 \square H_m) \leq 3 + m+ 1 -1 = m+3.$  However, Theorem~\ref{thm: general} implies that $\chi_{\ell}(C_3 \square H_m) \leq 3+m-1= m+2,$ and Proposition~\ref{pro: generalapplication} implies that $\chi_{\ell}(C_3 \square H_m)= m+2.$

\appendix

\section{Appendix}

We begin with a proof of Proposition~\ref{pro: addaddtoodd}.

\begin{pro*} [\textbf{\ref{pro: addaddtoodd}}] For $k \in \N$ and $m \in \{1,2,3\}$ we construct $G_{l,m,k}$ inductively as follows.  For $k=1$, $G_{l,m,k}$ is the graph constructed in the statement of Proposition~\ref{pro: addtoodd}.  For $k \geq 2$ we construct $G_{l,m,k}$ from $G_{l,m,k-1}$ as follows.  We add vertices $u_k$ and $s_k$ to $G_{l,m,k-1}$ and we add edges so that $u_k$ is adjacent to $\{u_j | 1 \leq j \leq k-1 \} \cup \{v_j | 1 \leq j \leq 2+m \}$ and so that $s_k$ is adjacent to $\{s_j | 1 \leq j \leq k-1 \} \cup (V(C)-\{v_1,v_2 \})$.  Then, $G_{l,m,k}$ is strong $(3+k)$-chromatic-choosable.  Moreover, $G_{l,m,k}$ is not strong $(3+k)$-critical when $m=2,3$.
\end{pro*}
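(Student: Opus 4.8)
The plan is to prove the first assertion by induction on $k$ and then derive the non-criticality from it. The base case $k=1$ is exactly Proposition~\ref{pro: addtoodd} (since $m\le 3\le 4$). For the inductive step, fix $k\ge 2$ and assume $G_{l,m,k-1}$ is strong $(k+2)$-chromatic-choosable. I would observe that $G_{l,m,k}$ is obtained from $G_{l,m,k-1}$ by exactly the operation of Lemma~\ref{lem: construct}: take
\[
A=\{u_j : 1\le j\le k-1\}\cup\{v_j : 1\le j\le 2+m\},\qquad
B=\{s_j : 1\le j\le k-1\}\cup\bigl(V(C)-\{v_1,v_2\}\bigr),
\]
so that $u_k$ is joined to every vertex of $A$ and $s_k$ is joined to every vertex of $B$. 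Since $m\ge 1$, the $v$-parts of $A$ and $B$ already exhaust $V(C)$, hence $A\cup B=V(G_{l,m,k-1})$; moreover $C:=A\cap B=\{v_3,\dots,v_{2+m}\}$ has $|C|=m\le 3$, so the bound on $|C|$ required by Lemma~\ref{lem: construct} holds regardless of the parity of $k+2$, while $v_1\in A-B$ and $v_{2l+1}\in B-A$ (using $m\le 2l-2$) give $|A|,|B|>|C|$. Also $k+2\ge 4\ge 3$, as required by that lemma.

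The only hypothesis of Lemma~\ref{lem: construct} that is not bookkeeping is $\chi(G_{l,m,k})>k+2$, which I would prove by the same parity argument used for $k=1$. By construction $\{u_1,\dots,u_k\}$ and $\{s_1,\dots,s_k\}$ are $k$-cliques, the first complete to $\{v_1,v_2,v_3\}$ and the second complete to $\{v_3,v_4,\dots,v_{2l+1}\}$. In any proper coloring of the odd cycle $C$, either the path $v_1v_2v_3$ uses three colors or the path $v_3v_4\cdots v_{2l+1}$ uses at least three colors, since otherwise $v_1$, $v_3$, $v_{2l+1}$ would all receive the same color, contradicting $v_1v_{2l+1}\in E(C)$. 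In the first case $\{u_1,\dots,u_k\}$ forces $k$ further colors disjoint from those three; in the second, $\{s_1,\dots,s_k\}$ forces $k$ further colors disjoint from those at least three; either way at least $k+3$ colors appear, so $\chi(G_{l,m,k})\ge k+3>k+2$. Lemma~\ref{lem: construct} then yields that $G_{l,m,k}$ is strong $(k+3)$-chromatic-choosable, completing the induction.

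For the last sentence, let $m\in\{2,3\}$. The graph $G_{l,1,k}$ is a spanning subgraph of $G_{l,m,k}$, obtained by deleting the edges $u_jv_i$ with $4\le i\le 2+m$; since $m\ge 2$ at least one such edge exists, so this subgraph is proper. By the part just proved (applied with $m=1$) we have $\chi(G_{l,1,k})=k+3=\chi(G_{l,m,k})$, so $G_{l,m,k}$ has a proper subgraph of the same chromatic number and is therefore not $(3+k)$-critical; since every strong $(3+k)$-critical graph is in particular $(3+k)$-critical, $G_{l,m,k}$ is not strong $(3+k)$-critical. I do not anticipate a genuine obstacle: essentially everything reduces to routine verification, and the only substantive point, the bound $\chi(G_{l,m,k})>k+2$, is a direct extension of the chromatic-number argument already recorded in the $k=1$ case of Proposition~\ref{pro: addtoodd}.
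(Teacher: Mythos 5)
Your proposal is correct and follows essentially the same route as the paper's own proof: induction on $k$ with the same choice of $A$ and $B$, the same parity argument on the paths $v_1v_2v_3$ and $v_3\cdots v_{2l+1}$ to get $\chi(G_{l,m,k})>k+2$, an appeal to Lemma~\ref{lem: construct}, and non-criticality via the proper subgraph $G_{l,1,k}$. The only difference is that you spell out a few bookkeeping details (e.g., $v_1\in A-B$, $v_{2l+1}\in B-A$, the parity clause in the $|C|$ bound) that the paper leaves implicit.
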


\begin{proof}  The proof is by induction on $k$.  Notice that the base case is the result of Proposition~\ref{pro: addtoodd}.  Now, suppose that the desired result holds for all natural numbers less than $k$ where $k \geq 2$.  So, we know that $G_{l,m,k-1}$ is strong $(2+k)$-chromatic-choosable.  Note that $\{u_j | 1 \leq j \leq k \}$ is a clique in $G_{l,m,k}$ adjacent to all the vertices in the path $P_1$ given by $v_1, v_2, v_3$.  Similarly, $\{s_j | 1 \leq j \leq k \}$ is a clique in $G_{l,m,k}$ adjacent to all the vertices in the path $P_2$ given by $v_3, v_4, \ldots, v_{2l+1}$.  Since we know that for any proper coloring of $C$, $P_1$ or $P_2$ must be colored with at least 3 colors, we have that $\chi(G_{l,m,k}) > k+2$.  Now, let $A= \{u_j | 1 \leq j \leq k-1 \} \cup \{v_j | 1 \leq j \leq 2+m \}$ and $B=\{s_j | 1 \leq j \leq k-1 \} \cup (V(C)-\{v_1,v_2 \})$.  We note that $A \cup B = V(G_{l,m,k-1})$, $|A \cap B| = m \leq 3$, $|A| > |A \cap B|$, and $|B| > |A \cap B|$.  Thus, Lemma~\ref{lem: construct} immediately implies that $G_{l,m,k}$ is strong $(3+k)$-chromatic-choosable.  It is also easy to see that $G_{l,2,k}$ and $G_{l,3,k}$ are not $(3+k)$-critical since they contain a copy of $G_{l,1,k}$ as a proper subgraph.
\end{proof}

Now we prove that Lemma~\ref{lem: construct} can be extended in the case where our starting graph is an odd cycle. The proof of this extension relies on the following proposition.

\begin{pro} \label{pro: fodd}  Let $G$ be $C_{2l+1}$ with its vertices in cyclic order as: $v_1, v_2, \ldots, v_{2l+1}$.  Let $f: V(G) \rightarrow \N$ be a function such that there exists $i$ and $j$ with: $f(v_i)=1$, $f(v_j)=3$, and $f(v_t)=2$ whenever $t \neq i$ and $t \neq j$.  Then, $G$ is $f$-choosable.
\end{pro}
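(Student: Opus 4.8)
The plan is to use the unique vertex $v_j$ with $|L(v_j)| = 3$ as a "release valve": after fixing the forced colour at $v_i$, colour the remaining path by two greedy sweeps that both terminate at $v_j$, so that when $v_j$ is finally coloured it faces only two already-coloured neighbours and its list of size $3$ leaves a colour available. This extra colour is genuinely needed: neither an odd cycle with all lists of size $2$ (it is not $2$-choosable) nor an odd cycle with one list of size $1$ and the rest of size $2$ is always colourable, so the argument must exploit the surplus at $v_j$.

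Concretely, let $L$ be an arbitrary assignment with $|L(v_t)| = f(v_t)$. Relabel the vertices in cyclic order as $x_0 = v_i, x_1, x_2, \dots, x_{2l}$, so that $x_1$ and $x_{2l}$ are the neighbours of $v_i$ and $v_j = x_p$ for some $1 \le p \le 2l$; here $|L(x_0)| = 1$, $|L(x_p)| = 3$, and $|L(x_t)| = 2$ for all other $t$. First colour $x_0$ with the unique colour in $L(x_0)$. Now suppose $1 < p < 2l$. Colour $x_1, x_2, \dots, x_{p-1}$ greedily in this order: when $x_t$ is processed its only previously-coloured neighbour is $x_{t-1}$ (for $x_1$ it is $x_0$), and since $|L(x_t)| = 2$ a colour remains. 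Symmetrically, colour $x_{2l}, x_{2l-1}, \dots, x_{p+1}$ greedily in this order, each time with exactly one previously-coloured neighbour and a list of size $2$. Finally colour $v_j = x_p$: its only coloured neighbours are $x_{p-1}$ and $x_{p+1}$, and $|L(x_p)| = 3 > 2$, so a colour is available, giving a proper $L$-colouring of $G$.

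The boundary cases $p = 1$ and $p = 2l$, where $v_j$ is adjacent to $v_i$, are handled by a single sweep: after colouring $x_0$, colour the rest of the path greedily starting from the endpoint $x_{2l}$ (resp. $x_1$) and ending at $v_j$, which then has coloured neighbours $x_0$ and one path-neighbour and list of size $3$; the degenerate triangle $l = 1$ falls under one of these. Since $L$ was arbitrary, $G$ is $f$-choosable. I do not anticipate a serious obstacle; the only point requiring care is the bookkeeping of which neighbour of a vertex is already coloured when it is processed, and the observation that running the two greedy sweeps toward $v_j$ from opposite sides is exactly what ensures $v_j$ never sees more than two coloured neighbours.
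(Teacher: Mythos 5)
Your proposal is correct and is essentially the paper's own argument: the paper also colors greedily along the ordering $v_i, v_{i+1}, \ldots, v_{j-1}, v_{i-1}, v_{i-2}, \ldots, v_j$, so that every vertex except $v_j$ has at most one previously colored neighbor and $v_j$, with its list of size $3$, is colored last against its two neighbors. Your two-sweep description and explicit boundary cases are just a more detailed account of the same greedy ordering.
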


\begin{proof}  Without loss of generality, suppose that $i=1$.  Suppose that $L$ is an arbitrary list assignment for $G$ such that $|L(v)|=f(v)$ for each $v \in V(G)$.  To prove the desired result we need only show that $G$ is $L$-colorable.  Suppose we order the vertices of $G$ as follows:

$$v_1, v_2, \ldots v_{j-1}, v_{2l+1}, v_{2l}, v_{2l-1}, \ldots, v_j.$$

\noindent One should note that it is possible $j-1=1$ or $j=2l+1$.  We notice that in the above ordering $v_1$ has no neighbors preceding it, $v_j$ has 2 neighbors preceding it, and each other vertex has one neighbor preceding it.  Thus, we can use the vertex ordering to greedily find a proper $L$-coloring for $G$.
\end{proof}

\begin{lem*} [\textbf{\ref{lem: constructodd}}] Let $G$ be an odd cycle $C_{2l+1}$. Suppose we can find sets $A, B \subseteq V(G)$ such that $A \cup B = V(G)$. Let $C=A \cap B$ and suppose  $ 0 < |C| \leq 8$, and $|A|, \;|B| > |C|$.  Form $G'$ by adding vertices $u$ and $s$ to $G$, and add edges so that $u$ is adjacent to every vertex in $A$ and $s$ is adjacent to every vertex in $B$.  If $\chi(G') > 3$, then $G'$ is strong $4$-chromatic-choosable.
\end{lem*}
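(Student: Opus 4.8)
The plan is to run the proof of Lemma~\ref{lem: construct} almost verbatim, specialized to $k=3$ and $G=C_{2l+1}$ (which is strong $3$-chromatic-choosable by Proposition~\ref{pro: obvious}(vi)), and to replace the one step that costs the constant — the bound on $|C|$ — by a stronger argument that uses Proposition~\ref{pro: fodd}: an odd cycle is choosable as soon as its lists have size $2$ except that one vertex is allowed size $1$ (provided one other vertex has size $3$). This extra slack is exactly what turns the old requirement ``some colour pair misses every list $L(v_i)$'' into the weaker requirement ``some colour pair misses all but at most one list $L(v_i)$'', and the latter is available precisely when $|C|\le 8$. Throughout, write $C=\{v_1,\dots,v_m\}$ with $m=|C|\le 8$, let $L$ be an arbitrary non-constant $3$-assignment for $G'$, and aim to build a proper $L$-colouring by first colouring $u$ and $s$ and then colouring the cycle $G=G'-\{u,s\}$.

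First I would dispose of the case $L(u)\cap L(s)\neq\emptyset$ exactly as in the first case of Lemma~\ref{lem: construct}: colour both $u$ and $s$ with a common colour $c_1$ and put $L'(v)=L(v)-\{c_1\}$ for $v\in V(G)$; if $L'$ is non-constant, Proposition~\ref{pro: obvious}(ii) finishes, and if $L'$ is constant then $L$ is constant on $V(G)$, so some $c_2\in L(u)$ lies outside the common list, and recolouring $u$ with $c_2$ while deleting $c_1$ only from the lists of neighbours of $s$ produces (using $|A|>|C|$, so that some vertex is not adjacent to $s$) a non-constant assignment with all lists of size $\ge 2$. This case uses nothing about $|C|$.

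Now assume $L(u)\cap L(s)=\emptyset$ and set $I=\bigcap_{i=1}^{m}L(v_i)$. If $|I|\ge 2$ I would copy sub-case (a) of Lemma~\ref{lem: construct} with no change: choose $U=\{c_1,c_2\}\subseteq I$ and colours $c',c''\in L(u)-U$, $c'''\in L(s)-U$, and note that since $U\subseteq L(v)$ and $c',c'''\notin U$ for every $v\in C$, deleting $c'$, $c'''$, or both from the cycle lists always leaves at least two colours, while $|A|>|C|$ makes one of the two resulting assignments non-constant, so Proposition~\ref{pro: obvious}(ii) applies. The new work is the case $|I|\le 1$. There, consider the $9$ colour pairs in $L(u)\times L(s)$; since $L(u)$ and $L(s)$ are disjoint $3$-sets, $L(v_i)$ meets them in $a_i$ and $b_i$ colours with $a_i+b_i\le 3$, hence contains at most $a_ib_i\le 2$ colour pairs, and $\sum_i a_ib_i\le 2m\le 16<18$, so some colour pair $(c',c''')$ is contained in at most one of $L(v_1),\dots,L(v_m)$. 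Colour $u$ with $c'$ and $s$ with $c'''$, and let $L'$ be the induced assignment on $G$; every vertex of $A-C$ and of $B-C$ keeps at least $2$ colours, and every vertex of $C$ keeps at least $1$, with at most one vertex of $C$ (the one whose list contains the pair, if any) keeping only $1$. If the pair lies in no $C$-list, then all $L'$-lists have size $\ge 2$ and $L'$ is not a constant $2$-assignment because $|I|\le 1$, so Proposition~\ref{pro: obvious}(ii) finishes; if the pair lies in exactly one list $L(v_{i_0})$ and some vertex of $G$ still has an $L'$-list of size $\ge 3$, then deleting excess colours realises the profile of Proposition~\ref{pro: fodd} ($v_{i_0}$ gets $1$, one size-$3$ vertex gets $3$, the rest get $2$) and $G$ is $L'$-colourable.

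The hard part will be the residual sub-subcase of $|I|\le 1$: the colour pair $(c',c''')$ lies in exactly one $C$-list $L(v_{i_0})$, yet after committing to it no vertex of $G$ has an available list of size $3$ — equivalently every vertex of $A-C$ already contains $c'$, every vertex of $B-C$ already contains $c'''$, and every vertex of $C\setminus\{v_{i_0}\}$ contains exactly one of $c',c'''$. In this configuration $v_{i_0}$ is forced and every other cycle vertex has a $2$-list, so one is reduced to extending the forced colour along the path $C_{2l+1}-v_{i_0}$; this succeeds by a greedy walk from $v_{i_0}$ unless both cycle-neighbours of $v_{i_0}$ are themselves forced, and that last obstruction should be ruled out either by re-selecting the colours of $u$ and $s$ to manufacture a size-$3$ list (possible because the lists on $A-C$ all share $c'$ while $|A|>|C|$) and re-running the counting, or by a short finite check over the colour pairs sharing a coordinate with $(c',c''')$. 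I expect this bookkeeping — cutting the pigeonhole finely enough that $|C|\le 8$ always leaves a usable colour pair — to be the most delicate part, exactly as sub-case (b) is the crux of Lemma~\ref{lem: construct}.
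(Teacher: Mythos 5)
Your setup is right and matches the paper's: dispose of $L(u)\cap L(s)\neq\emptyset$ and $|I|\ge 2$ as in Lemma~\ref{lem: construct}, then use the count $2m\le 16<18$ to find a color pair contained in at most one of $L(v_1),\dots,L(v_m)$, and invoke Proposition~\ref{pro: fodd} when the surviving lists realize the profile $(1,3,2,\dots,2)$. But the proposal stops exactly at the crux. Your ``residual sub-subcase'' --- the pair $(c',c''')$ lies in exactly one $C$-list and every other $C$-list contains $c'$ or $c'''$, so no vertex of $C$ retains a $3$-list --- is left as an expectation of ``delicate bookkeeping'' rather than an argument. Neither of your two suggested rescues is secured: the greedy walk from the forced vertex $v_{i_0}$ along the path $C_{2l+1}-v_{i_0}$ does \emph{not} succeed in general (the failure occurs at the last vertex of the path, which has two already-colored neighbors and only a $2$-list; this is precisely why Proposition~\ref{pro: fodd} insists on one vertex of list-size $3$), and ``re-selecting the colours of $u$ and $s$ and re-running the counting'' is not carried out.

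The paper closes this gap with a second, finer pigeonhole rather than a greedy or exhaustive check. Having fixed $(c_1,c_4)$ as a pair in exactly one $C$-list, restrict attention to the four pairs in $P'=\{c_2,c_3\}\times\{c_5,c_6\}$. The one $C$-list containing both $c_1$ and $c_4$ has only one remaining slot, so it contains no pair of $P'$; each of the other at most $7$ lists contains $c_1$ or $c_4$ and hence at most one pair of $P'$. Since $4\cdot 2=8>7$, some pair of $P'$, say $(c_2,c_5)$, lies in at most one $C$-list. Now iterate the dichotomy: if some $C$-list avoids both $c_2$ and $c_5$, that vertex keeps a $3$-list and Proposition~\ref{pro: fodd} finishes; otherwise every $C$-list contains one of $\{c_1,c_4\}$ and one of $\{c_2,c_5\}$, leaving room for at most one of $c_3,c_6$, so the pair $(c_3,c_6)$ appears in \emph{no} $C$-list and sub-case (b) of Lemma~\ref{lem: construct} applies directly. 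This is the step your proposal is missing; without it (or an equivalent resolution of the residual case) the proof is incomplete.
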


\begin{proof}  Note that when $|C| \leq 4$ the desired result immediately follows from Lemma~\ref{lem: construct}.  So, we assume that $5 \leq |C| \leq 8$. We let $C=\{v_1, \ldots, v_m \}$.  Suppose that $L$ is an arbitrary non-constant $3$-assignment for $G'$.  In order to show that $G'$ is strong $4$-chromatic-choosable, we must show $G'$ is $L$-colorable.  We know that $G=G'-\{u, s \}$.  We may assume that $L(u) \cap L(s) = \emptyset$, since the idea of the proof of case (i) in the proof of Lemma~\ref{lem: construct} proves the desired when $L(u) \cap L(s) \neq \emptyset$.  As in the proof of Lemma~\ref{lem: construct}, let $I=\bigcap_{i=1}^m L(v_i)$.  We can assume that $|I| < 2$ since following the idea presented for sub-case (a) in the proof of Lemma~\ref{lem: construct} yields the desired when $|I| \geq 2$.

\par

Suppose that $L(u)= \{c_1, c_2, c_3 \}$ and $L(v)= \{c_4, c_5, c_6 \}$.  The 9 possible ways to color $u$ and $s$ are represented by the 9 color pairs in the set $P=\{(c_i,c_j)|i \in \{1,2,3\}, j \in \{4,5,6\} \}$.  Since $m \leq 8$ and each list contains at most 2 color pairs, there must be a color pair in $P$ that is contained in at most one of the lists $L(v_1), \ldots, L(v_m)$.  Without loss of generality suppose $(c_1, c_4)$ is contained in at most one of these lists.  Note that if any of the color pairs in $P$ are contained in none of the lists $L(v_1), \ldots, L(v_m)$, we can find a proper $L$-coloring for $G'$ by following the idea of sub-case (b) in the proof of Lemma~\ref{lem: construct}.  So, we may assume that $(c_1, c_4)$ appears in exactly one of the lists: $L(v_1), \ldots, L(v_m)$.  We now consider two cases.  Specifically, we consider the cases: (1) there is some $j$ ($1 \leq j \leq m$) such that $L(v_j)$ contains neither $c_1$ nor $c_4$ and (2) Each list: $L(v_1), \ldots, L(v_m)$ contains $c_1$ or $c_4$.

\par

For (1) we color $u$ with $c_1$ and $s$ with $c_4$, and for each $v \in V(G)$ we let
\[ L'(v) = \begin{cases}
      L(v) - \{c_1\} & \textrm{ if $v \in A-C$} \\
      L(v) - \{c_4\} & \textrm{ if $v \in B-C$} \\
			L(v) - \{c_1,c_4\} & \textrm{if $v \in C$.}\\
   \end{cases} \]
We note that there is exactly one vertex in $G$ to which $L'$ assigns one color, and all other vertices in $G$ have at least two colors assigned to them by $L'$.  Moreover, $|L'(v_j)| = 3$.  So, Proposition~\ref{pro: fodd} immediately implies that we can complete a proper $L$-coloring of $G'$.

\par

For (2) we let $P'= \{(c_i,c_j)|i \in \{2,3\}, j \in \{5,6\} \}$.  We consider the lists $L(v_1), \ldots, L(v_m)$.  Of these lists, the list containing $(c_1,c_4)$ must contain no color pairs in $P'$, and since all these lists contain $c_1$ or $c_4$, the remaining lists contain at most one color pair in $P'$.  Now, suppose that every color pair in $P'$ occurs in at least two of the lists: $L(v_1), \ldots, L(v_m)$.  Since each $L(v_2), \ldots, L(v_m)$ can accommodate at most one pair in $P'$ we need $m \geq 9$.  Thus, there must be a color pair in $P'$ that is contained in at most one of these lists.  Without loss of generality, suppose that $(c_2, c_5)$ is such a color pair.  We may assume each list: $L(v_1), \ldots, L(v_m)$ contains $c_2$ or $c_5$ since otherwise we may obtain a proper $L$-coloring for $G'$ by proceeding as we did in case (1).  Since each list contains $c_2$ or $c_5$ and we already know each list contains $c_1$ or $c_4$, we know that none of the lists contain the color pair $(c_3, c_6)$.  This completes case (2) and we are finished.
\end{proof}

\end{document}